\newcommand{\x}[1][x]{#1}
\newcommand{\simplex}{\mathcal{S}}
\newcommand{\mk}[1][k]{m(\x^{#1})}
\renewcommand{\Sigma}{\mathfrak{S}}
\renewcommand{\geq}{\ge}
\renewcommand{\leq}{\le}
\renewcommand{\akshay}[1]{#1}
\title{Polyhedral Analysis of Symmetric Multilinear Polynomials \\over Box Constraints}
\author{Yibo Xu\affilinfo{Department of Mathematical Sciences, Rensselaer Polytechnic Institute, USA}[xuy24@rpi.edu] \and Warren Adams\affilinfo{Engineering and Information Science Branch (RTA), Air Force Office of Scientific Research, USA}[warren.adams.2@us.af.mil]\and Akshay Gupte\affilinfo{School of Mathematics, University of Edinburgh, UK}[akshay.gupte@ed.ac.uk]}
\date{April 29, 2021}
\begin{document}

\maketitle

\begin{abstract}
It is well-known that the convex and concave envelope of a multilinear polynomial over a box are polyhedral functions. Exponential-sized extended and projected formulations for these envelopes are also known. We consider the convexification question for multilinear polynomials that are symmetric with respect to permutations of variables. Such a permutation-invariant structure naturally implies a quadratic-sized extended formulation for the envelopes through the use of disjunctive programming. The optimization and separation problems are answered directly without using this extension. {The problem symmetry allows the} optimization and separation problems {to be answered} directly without using {any} extension. It also implies that permuting the coefficients of a core set of facets generates all the facets. We provide some necessary conditions and some sufficient conditions for a valid inequality to be a core facet. These conditions are applied to obtain envelopes for two classes: symmetric supermodular functions and multilinear monomials with reflection symmetry, thereby yielding alternate proofs to the literature. Furthermore, we use constructs from the reformulation-linearization-technique to completely characterize the set of points lying on each facet. 

\mykeywords{Convex Hull, Permutation Invariance, Kuhn's Triangulation, Submodular and Supermodular, Reformulation-Linearization Technique}
\mysubjclass{90C23, 90C57, 05E05, 52B15}
\end{abstract}

\section{Introduction}

A polynomial is multilinear if every monomial is square-free in the sense that it is a product of a subset of variables raised to the power one. Multilinear polynomials have degree $n$ and they become linear functions when $n-1$ variables are fixed (hence the name). A multilinear polynomial can be expressed as $\sum_{J\subseteq N}c_{J}\prod_{j\in J}x_{j}$ for some $c\in\real^{2^{n}}$. We are interested in symmetric multilinear polynomials (SMPs) in this paper, by which we mean the polynomial
\begin{subequations}
\begin{equation}
m(\x) \eq \sum_{i=2}^n c_i \sum_{\substack{J \subseteq N\\|J| =i}}\prod_{j \in J}x_j, \label{functionm}
\end{equation}
where we have dropped the constant and linear terms since they are inconsequential to us from the point of view of convexification. The symmetry of $m$ refers to the fact that for every $x\in\real^{n}$ and $\bar{x}$ equal to a permutation of $x$, we have $m(x) = m(\bar{x})$. Our goal is to study the convex hull of the graph of an SMP over a symmetric box in $\real^{n}$. Namely, we study $\conv{G}$ which is the convex hull of the set
\begin{equation}
G = \left\{(\x,y)\in \real^n \times \real: \x \in X, \; y= m(\x)\right\}, \label{Sdef}
\end{equation}
where $X$ is a box imposing the same lower and upper bounds (finite $\ell < u$) on all variables,
\begin{equation}	
X = \left\{\x\in \real^n: \ell \leq x_j \leq u, \ j \in N\right\}.	\label{Xdef}
\end{equation}
\end{subequations}
Throughout this paper, we use $N:= \{1, \ldots, n\}$. \akshay{Although coordinatewise scaling and translation does not break the symmetry of $m(x)$ and can reduce the box $X$ to be the unit hypercube $[0,1]^{n}$, we work with arbitrary $\ell$ and $u$ to avoid this affine transformation which can be cumbersome to perform.

The graph $G_{p} = \{(\x,y)\in B\times\real \colon y = p(x)\}$ of a general multilinear polynomial $p(x) = \sum_{J\subseteq N}c_{J}\prod_{j\in J}x_{j}$ over an arbitrary box $B\subset\real^{n}$ appears not only as a substructure in some important applications but also when optimizing a polynomial over binary variables \citep{del2016polyhedral}, which is equivalent to pseudo-Boolean optimization \citep{boros2002pseudo}, and as an intermediate set when performing factorable reformulations of general polynomial optimization problems \citep{bao2015global,del2019impact,buchheim2008efficient}. Hence, convexification of $G_{p}$ has been the subject of many studies in the literature. It is known that this convex hull is a polytope and exponential-sized extended formulations are available \cite{rikun1997convex,sherali1997convex,ballerstein2014extended} but an explicit description in the $(\x,y)$-space is not known in general. Earlier studies focused on  convexifying multilinear monomials \cite{cafieri2010convex,meyer2004trilinear,ryoo2001analysis,benson2004concave}, motivated by the classical linearization of a monomial $x_{1}x_{2}\dots x_{n}$ over $[0,1]^{n}$ \citep{glover74lin} which defines the convex hull for the graph of the monomial \cite{crama1993concave} and leads to the standard linearization for $G_{p}$. Separation over $G_{p}$ is NP-hard, and so different classes of valid inequalities and cutting plane procedures have been developed for use in global optimization algorithms  \citep{bao2015global,del2019impact,fomeni2015cutting,crama2017class}. There have also been many recent studies on describing $G_{p}$ or generalizations of it in the monomial space, which is obtained by adding a new variable for each monomial, under different assumptions on the structure of the polynomial \cite{del2018multilinear,del2018running,del2016polyhedral,gupte2020bilinear,chen2020multilinear,fischer2018matroid,fischer2020matroid,luedtke2012strength,buchheim2019berge}.

Symmetry has not been exploited in the rich body of literature on convexifying multilinear polynomials. The main objective of this paper is to initiate a systematic polyhedral analysis of $\conv{G}$ by exploiting symmetry in this set. Our focus is on the minimal inequality description of this full-dimensional polytope in the original $(x,y)$-space, as opposed to the many studies in the literature about convexifying general multilinear polynomials in the monomial space. Note that there will be exponentially many monomials in $m(x)$ when $c_{i}\neq 0$ where $i=n/k$ for some constant $k\ge 1$, and so a reformulation to the monomial space will not always be tractable. The symmetric structure we assume is interesting not only because it enables a thorough analysis of the convex hull and hence adds to the convexification literature, but also because it arises in many applications of combinatorial optimization \cite{anthony2016quadratization,boros2019compact,dey2020spca,kim2019convexification} and with regards to chromatic number of graphs and other areas of combinatorics \citep[cf.][]{eagles2020h,stanley1995symmetric}. 
}

\subsection{Our Contributions}

There are exponentially many facets, but symmetry of the function $m$ and of the set $X$ means that we only need to focus on a certain subset of inequalities, which we call core inequalities, and all other inequalities are generated as permutations of coefficients in core inequalities. In theory, all these inequalities can be obtained by projecting an extended formulation having  $O(n^{2})$ many variables and $O(n)$ many constraints, which is much smaller in size than the exponential-sized extensions for general multilinear polynomials. However, projecting this extension is a tedious task due to the combinatorial explosion that generally occurs with the projection operation. Instead, we give several necessary conditions and some sufficient conditions for a core valid inequality to be facet-defining, and these are more tractable to verify than the conditions that come from the use of polarity since the latter require enumeration of extreme points of a polyhedron. They can be applied to certify whether a given description of $\conv{G}$ is minimal or not. In that regard, we use our conditions to obtain explicit listing of all the facet-defining inequalities for two families of SMPs. The first family is that of supermodular SMPs and the second family is that of monomials whose lower and upper bounds are reflections of each other ($-\ell = u > 0$).

The Reformulation-Linearization Technique (RLT) is known to convexify the graph of a general multilinear polynomial over an arbitrary box. We show that RLT also implies that such a polynomial is nonnegative on a box if and only if it is nonnegative at every vertex of the box. This consequence enables us to characterize the set of points on $G$ that lie on a facet of the convex hull of $G$. The sets, by derivation, turn out to be different unions of $d $-dimensional ($0 \le d\le n-1$) faces of $\conv{G}.$

The questions of optimization and separation are also answered for $\conv{G}$ for any SMP without using the quadratic-sized extension. A linear function can be optimized over $\conv{G}$ in $O(n\log{n})$ time (assuming the value of function $m$ at a vertex of $X$ can be computed in $O(1)$ time) without using the extended formulation. Thus, a point can be separated from $\conv{G}$ in polynomial time via the ellipsoid method. There is also a direct separation algorithm that runs in $O(nt + n\log{n})$ time where $t$ is the number of core inequalities, and hence has polynomial time complexity when $t$ is bounded by a polynomial in input size.

\subsubsection{Related Results in Literature}

\akshay{Although we recognize that our convex hull descriptions for the two special families have been established before in literature, our necessary conditions for core facets of $\conv{G}$ yield alternate proofs for them. In this context, the following results are known. A set function is supermodular if and only if a certain extension of it from the vertices of a box to the entire box is a concave function \cite[Proposition 4.1]{lovasz1983submod}; see \cite[Theorem 4]{iwata2008submodular} for a simple proof. The projection of this extension generates the concave envelope of a supermodular function, and this envelope is described in the $x$-space by so-called polymatroid inequalities; see \cite[Theorem 1]{atamturk2008polymatroids}, \cite[Lemma 4]{vondrak2010lecture}, \cite[Theorem 3.3]{tawarmalani2013explicit} for some proofs of this well-known fact. These polymatroid inequalities can be derived from polarity and the results on optimizing over the polymatroid polyhedron \cite{edmonds1970submodular}. There is one inequality for each $n$-permutation, allowing for repetitions, and this immediately relates to there being a single core facet of the envelope and all other facets being permutations of it. 
The convex envelope of a supermodular function is not known in general and is NP-hard to separate over. For the symmetric structure that is considered in this paper, this envelope was first described by \cite[Theorem 4.6]{tawarmalani2013explicit} using their approach of strategically computing an exponential number of subdivisions of $X$ and by using the extreme points of these subdivisions to calculate the facets. 
A similarity between this proof and ours is that they both rely on the Kuhn's triangulation \cite{kuhn1960lemma} of a box. The convex hull of a monomial with $-\ell = u$, was first established by the authors in \citep[Theorem 4.1]{monomerr}. However, it was done so without using the symmetry in the monomial to recognize the core facets. 
}

Since the original submission of this paper, convexification of permutation-invariant sets, such as the graph $G$, has been studied by \cite{kim2019convexification} who give a general framework for obtaining an extended formulation with $O(n^{2})$ many variables and inequalities for the convex hulls of such sets. This framework is based on first convexifying a strategically-defined subset of the region of interest, and then obtaining the convex hull for all permutations of each point within the convexified set. In the realm of convex hulls for SMPs, \cite{kim2019convexification} and this paper both utilize information relative to a specific simplex that generates the Kuhn's triangulation of a box, and this similarity is not entirely surprising because of symmetry in the problem.

\subsection{Organisation of the Paper}
Section~2 presents an extended formulation for $\conv{G}$, gives a polar description of this convex hull, introduces the concept of core inequalities from which all valid linear inequalities can be derived upto permutations the coefficients, and answers the questions of optimization and separation. Section~3 provides various conditions for a core valid equality to be a core facet. Section~4 gives the RLT theory for general multilinear polynomials and derives consequences of it on nonnegativity of the polynomial over a box. This RLT machinery enables us to characterize the set of points in the graph at which a core facet is satisfied exactly. Section~5 considers the case of $m(\x)$ being a supermodular function and Section~6 considers the case of $m(\x)$ being a monomial with the variable domain being a box that allows reflections across the origin. Finally, Section~7 provides a summary of the paper and highlights some outstanding open questions for future research. The Appendix gives alternate arguments for deriving basic properties of $\conv{G}$ using the RLT, and also has illustrative examples for our main results.

\section{Preliminaries}

\akshay{The convex hull of $G$ is a full-dimensional set because $G$ is the surface of a nonlinear function taken over an $n$-dimensional box, and this set is a polytope whose vertices are in bijection to the vertices of the box $X$ since this is known for general multilinear functions \citep{rikun1997convex,sherali1997convex}.} Throughout this paper, we will study inequalities of the form
\begin{equation}
\beta_0+\sum_{j=1}^n\beta_jx_j+\beta^{\prime}y \geq 0. \label{sum24}
\end{equation}
\akshay{Trivial facets of $\conv{G}$ (also referred to as vertical facets in the literature) are the facets generated by valid $\x[\beta]$ with $\beta^{\prime}=0$. For $n\ge 3$, trivial facets are precisely the bounds on the variables, and for $n=2$ there are no trivial facets \citep[Theorem 2.4 and Remark 2.1]{bao2009multiterm}. Wlog and upto scaling, we can assume that every nontrivial valid inequality has $\beta^{\prime}=\pm{1}$. Since $\conv{G}$ is the intersection of the epigraph of the convex envelope of $m$ and hypograph of the concave envelope of $m$, a facet-defining inequality (facet) with $\beta^{\prime}=1$ (resp. $\beta^{\prime}=-1$) represents a nontrivial facet of the epigraph (resp. hypograph). Since $\conv{G}$ is full-dimensional, for every nontrivial facet of $\conv{G}$ there exists a unique nontrivial valid inequality. We will characterize nontrivial facets using the concept of core inequalities.

Certain notation will be useful throughout our study. Our polyhedral analysis will rely on the simplex
\begin{equation}
\simplex \equiv \left\{\x\in \real^n: u \geq x_1\geq\ldots\geq x_n\geq \ell \right\}, \label{simpleex}
\end{equation}
whose $n+1$ extreme points are
\begin{equation}	\label{def:xk}
\x^{k} \equiv \left(\underbrace{u, \ldots, u}_{k},\, \underbrace{\ell,\ldots,\ell}_{n-k} \right), \quad k= 0,\dots,n.
\end{equation}
\akshay{The simplex $\simplex$ is the one that has been used in combinatorial geometry to yield the Kuhn's triangulation of a box  \cite{kuhn1960lemma}. It is also known to be useful for convexifying general submodular/supermodular functions \cite{lovasz1983submod,tawarmalani2013explicit} and we will see this also in \textsection\ref{sec:supermod}. The value of the multilinear function at each $\x^{k}$ can be computed by substituting it into equation~\eqref{functionm}. In the special case of $\ell=0$ and $u=1$, this becomes the combinatorial formula $m(\x^{k}) = c_{k} + \sum_{j=2}^{k-1}\binom{k}{j}\,c_{j}$.} For notational convenience, we let ${\cal L}_k(\beta_0,\x[\beta])= \beta_0+\sum_{j=1}^n\beta_jx^k_j$ be that value obtained by inserting extreme point $\x^k$ of $\simplex$ into the expression $\beta_0+\sum_{j=1}^n\beta_jx_j$ of \eqref{sum24}, where $x^k_j$ denotes entry $j$ of $\x^k,$ so that 
\begin{equation}
{\cal L}_k(\beta_0,\x[\beta])=\beta_0+u\left(\sum_{j=1}^k\beta_j\right)+\ell \left(\sum_{j=k+1}^n\beta_j\right), \qquad \forall \; k \in \{0, \ldots, n\}, \label{anoto25}
\end{equation} 
with $\sum_{j=1}^0\beta_j=0$ and $\sum_{j=n+1}^n\beta_j=0$ in ${\cal L}_0(\beta_0,\x[\beta])$ and ${\cal L}_n(\beta_0,\x[\beta]),$ respectively. 

We begin by noting two implicit descriptions of $\conv{G}$, one is an extended formulation that projects onto this convex hull and another is a polarity result that gives a characterization of all the facets. Then we introduce core inequalities as those inequalities having a nondecreasing order on the coefficients and permutations of which generate all the valid inequalities. Lastly, we give algorithms for optimizing and separating over $\conv{G}$.
}

\subsection{Implicit Descriptions of the Convex Hull}

\akshay{A straightforward application of disjunctive programming along with using the well-known result that the envelopes of a multilinear function are generated by the extreme points of the box gives us a $O(n^{2})$-sized extended formulation for $\conv{G}$.

\begin{proposition}	\thlabel{extform}
The following polyhedron projects onto $\conv{G}$,
\begin{align*}
\Big\{(x,\{w^{k}\}_{k=0}^{n}, y , v, \lambda) \sep & x_{j} = \sum_{k=0}^{n}w^{k}_{j}, \; \forall j\in N, \ y = \sum_{k=0}^{n}v_{k}, \ \sum_{k=0}^{n}\lambda_{k}=1, \\ 
& \sum_{j=1}^{n}w^{k}_{j} = (ku + (n-k)\ell)\lambda_{k}, \ v_{k} = \mk\lambda_{k}, \ k=0,\dots,n \\
& \ell \lambda_{k} \le w^{k}_{j} \le u\lambda_{k} \; \forall j,k, \ v \in \real^{n+1}, \ \lambda \in \real^{n+1}_{+} \Big\}
\end{align*}
\end{proposition}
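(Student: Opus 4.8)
The plan is to express $\conv{G}$ as the convex hull of a union of $n+1$ polytopes and then read off the stated extended formulation from the standard disjunctive-programming construction for such a union. Recall from the preliminaries that $\conv{G}$ equals the convex hull of the lifted box vertices $\{(\bar{x},m(\bar{x})) : \bar{x} \text{ a vertex of } X\}$. The first step is to collapse these $2^{n}$ points using symmetry: if $\bar{x}$ is a vertex of $X$ with exactly $k$ coordinates equal to $u$, then permutation invariance of $m$ gives $m(\bar{x}) = \mk$. The lifted vertices therefore partition into $n+1$ groups indexed by $k\in\{0,\dots,n\}$, every vertex in group $k$ sharing the common height $\mk$, and
\[
\conv{G} = \conv\left(\bigcup_{k=0}^{n} P_{k}\right), \qquad P_{k} = \Big\{(x,y) : \ell \le x_{j} \le u \ \forall j, \ \textstyle\sum_{j=1}^{n}x_{j} = ku+(n-k)\ell, \ y = \mk \Big\}.
\]

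The second step, which I expect to be the main technical point, is to verify that each $P_{k}$ really is the convex hull of its group-$k$ lifted vertices; equivalently, that the $x$-polytope $\{x\in[\ell,u]^{n} : \sum_{j}x_{j} = ku+(n-k)\ell\}$ is the convex hull of the $\{\ell,u\}$-vectors with precisely $k$ entries equal to $u$ (a translated and scaled hypersimplex). Containment of those vectors in the polytope is immediate since the polytope is convex; for the reverse inclusion I would show the polytope has no other vertices. On this $(n-1)$-dimensional set at least $n-1$ of the box bounds are tight at any vertex, and because $k$ and the number of tight upper bounds are integers, the sum equation forces the single remaining coordinate to land on $\ell$ or $u$ as well. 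This short integrality argument is exactly what pins the vertex set down to the group-$k$ box vertices and keeps the disjunction at $n+1$ pieces rather than $2^{n}$.

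With $\conv{G}=\conv(\bigcup_{k}P_{k})$ in hand, the third step is the disjunctive construction. For each disjunct I introduce a scaled copy $(w^{k},v_{k})$ of $(x,y)$ and a multiplier $\lambda_{k}\ge 0$, multiply every defining relation of $P_{k}$ by $\lambda_{k}$ to obtain $\sum_{j}w^{k}_{j} = (ku+(n-k)\ell)\lambda_{k}$, $v_{k} = \mk\lambda_{k}$, and $\ell\lambda_{k}\le w^{k}_{j}\le u\lambda_{k}$, and aggregate by $x=\sum_{k}w^{k}$, $y=\sum_{k}v_{k}$, $\sum_{k}\lambda_{k}=1$. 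Since each $P_{k}$ is a nonempty bounded polyhedron, Balas' theorem guarantees that the projection of this system onto $(x,y)$ equals $\conv(\bigcup_{k}P_{k})=\conv{G}$ with no closure operation needed, and the scaled box constraints automatically enforce $w^{k}=0$, $v_{k}=0$ whenever $\lambda_{k}=0$. The resulting system is precisely the polyhedron in the statement, completing the proof.

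If I prefer to avoid quoting the general disjunctive result, the two containments can be argued directly. Given a feasible point of the extended system, for each $k$ with $\lambda_{k}>0$ the vector $w^{k}/\lambda_{k}$ lies in the hypersimplex, hence is a convex combination of group-$k$ box vertices, each lifted to the common height $\mk=v_{k}/\lambda_{k}$; thus $(w^{k}/\lambda_{k},v_{k}/\lambda_{k})\in\conv{G}$, and $(x,y)=\sum_{k}\lambda_{k}(w^{k}/\lambda_{k},v_{k}/\lambda_{k})\in\conv{G}$. Conversely, any representation of $(x,y)\in\conv{G}$ as a convex combination of lifted box vertices can be regrouped by $k$ to define $\lambda_{k}$, $w^{k}$, and $v_{k}$ that satisfy every constraint of the extended polyhedron.
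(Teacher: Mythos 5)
Your proposal is correct and follows essentially the same route as the paper: reduce to the lifted box vertices via vertex extendability, group them by the number of $u$-coordinates using symmetry of $m$, identify each group's convex hull with the slice $\{x\in[\ell,u]^{n}:\sum_{j}x_{j}=ku+(n-k)\ell\}$, and apply Balas' disjunctive programming to the union of the resulting polytopes $P_{k}$. The only difference is that you supply the short integrality argument showing the slice has no vertices beyond the group-$k$ box vertices (which the paper asserts without proof) and sketch a direct two-containment verification as an alternative to citing Balas; both are welcome but do not change the substance.
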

\begin{proof}
It is well-known \cite{sherali1997convex,rikun1997convex} that envelopes of a multilinear function over a box have the vertex extendability property meaning that for any multilinear function $p(x)$ and box $B\subset\real^{n}$, the convex hull of $\{(x,y)\in B\times\real\colon y = p(x) \}$ is equal to the convex hull of $\{(x,y)\in \vertex{B}\times\real\colon y = p(x) \}$. Therefore, we have $\conv{G}$ being equal to $\conv{\{(x,y)\in\{\ell,u\}^{n}\times\real\colon y = m(x)\}}$. The vertex set $\{\ell,u\}^{n}$ can be partitioned into $n+1$ sets with each set for $k\in\{0,\dots,n\}$ corresponding to points having $k$ entries of $u$, thereby giving us
\begin{equation}	\label{convunion}
\vertex{(\conv{G})} \eq \bigcup_{k=0}^{n}\bigcup_{\sigma\in\Sigma_{n}} \left\{(\sigma\cdot\x^{k},m(\sigma\cdot\x^{k})) \right\}
\eq \bigcup_{k=0}^{n} \left\{ Q_{k}  \times \{\mk\} \right\},
\end{equation}
where $\Sigma_{n}$ is the symmetric group of $n$ elements, $\sigma\cdot v = (v_{\sigma(1)}, v_{\sigma(2)}, \dots, v_{\sigma(n)})$ for a vector $v\in\real^{n}$, $Q_{k} := \cup_{\sigma\in\Sigma_{n}}\{\sigma\cdot\x^{k}\}$,  and the second equality is due to symmetry of $m(x)$. Since $\conv{G}$ is the convex hull of its vertices, we get $\conv{G} = \conv{\left(\bigcup_{k=0}^{n} \left\{ \conv{Q_{k}}  \times \{\mk\} \right\}\right)}$. Since $Q_{k}$ is the set of all points that have $k$ entries of $u$ and $n-k$ entries of $\ell$, we have $\conv{Q_{k}} = \{x\in[\ell,u]^{n}\colon \sum_{j=1}^{n}x_{j} = ku + (n-k)\ell \}$. Set $P_{k} := \conv{Q_{k}}\times\{\mk\} = \{(x,y)\in[\ell,u]^{n}\times\real\colon \sum_{j=1}^{n}x_{j} = ku + (n-k)\ell, \, y = \mk\}$, which is a polyhedron. Since $\conv{G} = \conv(\cup_{k=0}^{n}P_{k})$, applying disjunctive programming \cite{balas1979disjunctive} yields the desired extended formulation.
\end{proof}

Another way of implicitly describing the convex hull is to characterise all its facets using the extreme points of the polar of the convex hull. This is known from \cite[Theorem 2]{sherali1997convex} and \cite[Theorem 2.4]{bao2009multiterm} for general multilinear functions over arbitrary boxes, and hence we state the below result for an SMP without proof.

\begin{proposition}
An inequality~\eqref{sum24} with $\beta' = 1$ (resp. $\beta' = -1$) is a facet of $\conv{G}$ if and only if $(\beta_{0},\beta)$ is an extreme point of the polyhedron $E := \{(\beta_{0},\beta)\colon \beta_{0}+\beta^{\top}\bar{\x} \ge - m(\bar{\x}), \ \bar{x}\in\{\ell,u\}^{n} \}$ (resp. $H := \{(\beta_{0},\beta)\colon \beta_{0}+\beta^{\top}\bar{\x} \ge m(\bar{\x}), \ \bar{x}\in\{\ell,u\}^{n} \}$).
\end{proposition}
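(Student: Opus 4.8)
The plan is to treat $\conv{G}$ directly as a full-dimensional polytope in $\real^{n+1}$ with known vertex set and to invoke the standard vertex--facet duality, with the normalization $\beta'=1$ selecting the ``lower'' facets. Fix $\beta'=1$ throughout (the case $\beta'=-1$ is symmetric under replacing $y$ by $-y$, i.e.\ $m$ by $-m$, which interchanges $E$ and $H$). First I would record the validity dictionary: by the vertex extendability recalled in the proof of \thref{extform}, the vertices of $\conv{G}$ are exactly the points $(\bar{\x},m(\bar{\x}))$ with $\bar{\x}\in\{\ell,u\}^{n}$, so inequality~\eqref{sum24} with $\beta'=1$ is valid for $\conv{G}$ if and only if it holds at every such vertex, i.e.\ $\beta_{0}+\beta^{\top}\bar{\x}\ge -m(\bar{\x})$ for all $\bar{\x}\in\{\ell,u\}^{n}$. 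This is precisely $(\beta_{0},\beta)\in E$, so $E$ is exactly the set of coefficient vectors of valid inequalities normalized to $\beta'=1$.

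Next I would characterize both sides of the claimed equivalence through the set $T\subseteq\{\ell,u\}^{n}$ of box-vertices at which a given $(\beta_{0},\beta)\in E$ is tight. On the polytope side, the face of $\conv{G}$ cut out by the inequality is $F=\conv{\{(\bar{\x},m(\bar{\x}))\colon \bar{\x}\in T\}}$, and since $\conv{G}$ is full-dimensional the inequality is a facet if and only if $\dim F=n$, equivalently the homogenized points $\{(1,\bar{\x},m(\bar{\x}))\colon \bar{\x}\in T\}$ have rank $n+1$. On the polar side, the constraint of $E$ indexed by $\bar{\x}$ has gradient $(1,\bar{\x})$ in $(\beta_{0},\beta)$-space; because these gradients over all $\bar{\x}\in\{\ell,u\}^{n}$ already span $\real^{n+1}$ (the box is full-dimensional), the polyhedron $E$ is pointed, and $(\beta_{0},\beta)$ is an extreme point of $E$ if and only if the tight gradients $\{(1,\bar{\x})\colon \bar{\x}\in T\}$ have rank $n+1$.

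The crux is to identify these two rank conditions, and here the single useful observation is that $m$ becomes affine when restricted to the tight set: for $\bar{\x}\in T$ we have $m(\bar{\x})=-\beta_{0}-\beta^{\top}\bar{\x}$. Consequently the linear map $L\colon(a,z)\mapsto(a,z,-\beta_{0}a-\beta^{\top}z)$, which is injective because its first $n+1$ coordinates recover its argument, sends each $(1,\bar{\x})$ exactly to $(1,\bar{\x},m(\bar{\x}))$ for $\bar{\x}\in T$. Rank is preserved under an injective linear map, so $\{(1,\bar{\x},m(\bar{\x}))\colon\bar{\x}\in T\}$ and $\{(1,\bar{\x})\colon\bar{\x}\in T\}$ have the same rank, which closes the equivalence.

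The main obstacle I anticipate is not any single step but the bookkeeping around the normalization: one must confirm that a $\beta'=1$ inequality which is facet-defining for $\conv{G}$ is genuinely a nontrivial facet (immediate, as $\beta'\neq 0$) and that restricting attention to the affine slice $\beta'=1$ of the polar cone loses nothing, which is exactly what the validity dictionary guarantees since every valid inequality can be normalized to $\beta'=\pm1$. A secondary point worth verifying is that $E$ is pointed so that its extreme points exist; this again follows from full-dimensionality of the box, which forces the lineality space $\{(\beta_{0},\beta)\colon \beta_{0}+\beta^{\top}\bar{\x}=0 \ \forall\,\bar{\x}\in\{\ell,u\}^{n}\}$ to be trivial.
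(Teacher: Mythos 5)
Your argument is correct. Note that the paper itself supplies no proof of this proposition --- it is stated as a known fact, deferring to results of Sherali (1997) and Bao et al.\ (2009) for general multilinear functions over arbitrary boxes --- so there is no in-paper argument to compare against; what you have written is a clean, self-contained version of the standard polarity/vertex--facet duality argument that those references formalize. All the individual steps check out: validity of a $\beta'=1$ inequality reduces to the $2^{n}$ vertex conditions defining $E$ by vertex extendability; a valid inequality is facet-defining for the full-dimensional polytope $\conv{G}\subset\real^{n+1}$ exactly when the homogenizations $(1,\bar{\x},m(\bar{\x}))$ of its tight vertices have rank $n+1$; extreme points of the pointed polyhedron $E\subset\real^{n+1}$ are exactly the points whose active constraint gradients $(1,\bar{\x})$ have rank $n+1$; and your injective linear map $L(a,z)=(a,z,-\beta_{0}a-\beta^{\top}z)$, which carries $(1,\bar{\x})$ to $(1,\bar{\x},m(\bar{\x}))$ precisely on the tight set, shows the two rank conditions coincide. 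The one point worth making explicit in a final write-up is that the face induced by an extreme point of $E$ has dimension exactly $n$ and not $n+1$: since $\beta'=1\neq 0$, the affine equation $\beta_{0}+\beta^{\top}x+y=0$ cannot hold on all of the full-dimensional set $\conv{G}$, so rank $n+1$ of the tight homogenized vertices indeed certifies a facet rather than an improper face. The reduction of the $\beta'=-1$ case to the $\beta'=1$ case via $y\mapsto -y$ (equivalently $m\mapsto -m$), which swaps $E$ and $H$, is also correct.
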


The challenge with using this polarity result is that it requires enumeration of extreme points of the set $E$ and $H$, which are exponentially many in general. The next section observes that it suffices to focus attention on only a subset of inequalities since all other inequalities are obtained via permutations.
}

\subsection{Core Inequalities}

We define \emph{core inequalities} as being those inequalities~\eqref{sum24} that have $\beta^{\prime}=\pm{1}$ and $\beta_1 \leq \ldots \leq \beta_n.$ The restriction that $\beta^{\prime}=\pm{1}$ for nonzero $\beta^{\prime}$ is nonrestrictive, as it follows from scaling. The restriction that $\beta_1 \leq \ldots \leq \beta_n$ follows from the problem symmetry, as an inequality will be valid (a facet) for $\conv{G}$ if and only if every symmetric copy obtained by permuting the entries in $\x[\beta]^{\top}$ is also valid (a facet). A \emph{valid core inequality} is a core inequality that is valid for $\conv{G}.$ A valid core inequality that is also a facet for $\conv{G}$ is a \emph{core facet}. 

Symmetry implies that the validity of a core inequality \eqref{sum24} can be checked in terms of only the $(n+1)$ extreme points of the simplex $\simplex.$ 

\begin{lemma}	\thlabel{foundation}
A core inequality \eqref{sum24} is a valid core inequality if and only if 
\begin{equation}
{\cal L}_k(\beta_0,\x[\beta]) + \beta^{\prime} m(\x^k) \geq 0, \qquad \forall \; k \in \{0, \ldots, n\}. \label{anoto}
\end{equation}
\end{lemma}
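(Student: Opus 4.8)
The plan is to reduce validity of a core inequality over $\conv{G}$ to a finite check at the vertices of the box $X$, and then to use symmetry to collapse those $2^{n}$ checks down to the $n+1$ conditions in \eqref{anoto}. First I would invoke the vertex extendability property recalled in the proof of \thref{extform}: since $\conv{G}$ equals the convex hull of $\{(\bar{\x},m(\bar{\x}))\colon \bar{\x}\in\{\ell,u\}^{n}\}$, the inequality \eqref{sum24} is valid for $\conv{G}$ if and only if $\beta_{0}+\sum_{j=1}^{n}\beta_{j}\bar{x}_{j}+\beta^{\prime}m(\bar{\x})\geq 0$ holds at every vertex $\bar{\x}\in\{\ell,u\}^{n}$.

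Next I would partition the vertices according to the number $k\in\{0,\dots,n\}$ of coordinates equal to $u$. The key observation is that within the group indexed by $k$, the nonlinear contribution $\beta^{\prime}m(\bar{\x})$ is constant and equal to $\beta^{\prime}\mk$: this is exactly the symmetry of $m$, since every such $\bar{\x}$ is a permutation of $\x^{k}$. Hence, over this group the inequality is hardest to satisfy at the vertex that minimizes the linear part $\sum_{j=1}^{n}\beta_{j}\bar{x}_{j}$.

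The $(\Leftarrow)$ direction is the crux. I would show that, under the core ordering $\beta_{1}\leq\ldots\leq\beta_{n}$, the minimum of $\sum_{j=1}^{n}\beta_{j}\bar{x}_{j}$ over all $\bar{\x}$ with exactly $k$ coordinates equal to $u$ is attained at $\x^{k}$. Writing $\sum_{j}\beta_{j}\bar{x}_{j}=\ell\sum_{j=1}^{n}\beta_{j}+(u-\ell)\sum_{j\in S}\beta_{j}$, where $S$ is the index set assigned the value $u$ with $|S|=k$, and using $u-\ell>0$, this minimum is obtained by assigning $u$ to the $k$ smallest coefficients, namely $\beta_{1},\dots,\beta_{k}$, which is precisely the assignment realized by $\x^{k}$. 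Therefore the minimum of the full left-hand side over group $k$ equals ${\cal L}_{k}(\beta_{0},\x[\beta])+\beta^{\prime}\mk$ by \eqref{anoto25}, and demanding nonnegativity over all $n+1$ groups is exactly \eqref{anoto}.

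The $(\Rightarrow)$ direction is immediate, since each $\x^{k}$ is itself a vertex of $X$, so validity forces \eqref{anoto} directly. The only genuine work is the elementary rearrangement in the $(\Leftarrow)$ direction, and the point to get right there is that the core ordering on $\x[\beta]$ and the placement of the $u$-entries in the leading coordinates of $\x^{k}$ are aligned so that $\x^{k}$ is a minimizer within its group (ties cause no trouble, as $\{1,\dots,k\}$ always remains among the minimizers); everything else is bookkeeping on \eqref{anoto25}.
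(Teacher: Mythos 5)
Your proposal is correct and follows essentially the same route as the paper: reduce validity to the $2^{n}$ vertices of $X$, partition them by the number $k$ of $u$-entries, use the symmetry of $m$ to fix $y=\mk$ within each group, and use the nondecreasing ordering $\beta_{1}\leq\cdots\leq\beta_{n}$ to show $\x^{k}$ minimizes the linear part within its group. You simply make explicit the rearrangement argument that the paper asserts in one line.
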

\begin{proof}
The only if direction is trivial and so we consider the if direction. It is sufficient to show that the $(n+1)$ inequalities of \eqref{anoto} imply that \eqref{sum24} is nonnegative for all $2^n$ extreme points of $\conv{G}.$ Toward this end, consider any $k \in\{0, \ldots, n\},$ and note that each of the $n \choose k$ extreme points $(\x,y)$ of $\conv{G}$ with $\x$ having $k$ entries of value $u$ and $(n-k)$ entries of value $\ell$ also has $y = m(\x^k).$ In addition, each such extreme point yields a value for $\left(\beta_0+\sum_{j=1}^n\beta_jx_j\right)$ that is at least as large as ${\cal L}_k(\beta_0,\x[\beta]),$ so that \eqref{sum24} is nonnegative for all $n \choose k$ such extreme points. As the result holds true for every such $k,$ it holds true for all the extreme points of $\conv{G}.$ This completes the proof.
\end{proof}

\subsection{Optimization and Separation}

\akshay{
The question of optimizing a linear function over $\conv{G}$ can be solved via a sorting algorithm  without using the quadratic-sized extended formulation of \mythref{extform}.

\begin{proposition}	\thlabel{optcompl}
For any $(\alpha,\alpha')\in\real^{n}\times\real$,
\[
\max\left\{\alpha^{\top}x + \alpha^{\prime}y \sep (x,y)\in\conv{G} \right\} \eq \max_{k=0,\dots,n}\left\{u\sum_{j=1}^{k}\alpha_{\sigma(j)} + \ell\sum_{j=k+1}^{n}\alpha_{\sigma(j)} + \alpha^{\prime}\mk \right\},
\]
where the permutation $\sigma$ is such that $\alpha_{\sigma(1)}\ge \alpha_{\sigma(2)} \ge \cdots \ge \alpha_{\sigma(n)}$.
\end{proposition}
\begin{proof}
The linear function has an optimum at a vertex of $\conv{G}$, and so the left-hand side is equivalent to maximising $\alpha^{\top}x + \alpha^{\prime}y$ over $\vertex{(\conv{G})}$. Our claim follows after using $\vertex{(\conv{G})} = \cup_{k=0}^{n}\{Q_{k}\times\{\mk\} \}$ from \eqref{convunion} and observing that $\max\{\alpha^{\top}x \colon x\in Q_{k}\} = u\sum_{j=1}^{k}\alpha_{\sigma(j)} + \ell\sum_{j=k+1}^{n}\alpha_{\sigma(j)}$.
\end{proof}

Hence, assuming computation of each $\mk$ takes $O(1)$ time, linear optimization over $\conv{G}$ can be solved in $O(n\log{n})$ time by sorting the vector $\alpha$.

Due to the well-known equivalence of complexity of optimization and separation over a polyhedron, it follows that a given point can be separated from the convex hull of $G$ in polynomial time. However, this connection invokes the ellipsoid algorithm whose complexity is a high degree polynomial in the input encoding. The question of separation can be answered directly if there are polynomially many core facets. The approach is similar to that used for \mythref{optcompl} but here the point $(\bar{x},\bar{y})$ to be separated has $\bar{x}$ sorted in nonincreasing order. 

\begin{proposition}	\thlabel{sepcompl}
If $\conv{G}$ has $t$ core facets, then a point can be separated from $\conv{G}$ in $O(nt+n\log{n})$ time.
\end{proposition}
\begin{proof}
Let $(\bar{x},\bar{y})\in X\times\real$ be a given point. Take $\sigma\in\Sigma_{n}$ such that $\bar{x}_{\sigma(1)} \le \bar{x}_{\sigma(2)} \le \cdots \le \bar{x}_{\sigma(n)}$. Choose any core facet~\eqref{sum24}. We have $\beta_{1}\le \cdots\le\beta_{n}$ and  need to check if this facet or any permuted facet is violated, i.e., whether there exists a permutation $\sigma\in\Sigma_{n}$ such that $\beta_0+\sum_{j=1}^n\beta_{\sigma(j)} \bar{x}_j+\beta^{\prime}\bar{y} < 0$. This is equivalent to checking whether $- \beta_{0} - \beta^{\prime}\bar{y} > \min\{\bar{x}^{\top}v\colon v\in P_{n}(\beta) \}$, where $P_{n}(\beta) = \conv{\{(\beta_{\sigma(1)},\beta_{\sigma(2)},\dots,\beta_{\sigma(n)}) \colon \sigma\in\Sigma_{n} \}}$ is the generalized permutahedron \cite{bowman1972permutation} with respect to $\beta$. Linear optimization over the permutahedron can be done via the sorting algorithm since the permutahedron is the base polymatroid polyhedron corresponding to a certain submodular function \cite{rado52ineq} and \citet{edmonds1970submodular} established that the sorting algorithm works for optimizing over any base polymatroid. In particular, we have that our minimum is attained at $v^{*} = \sigma^{-1}\cdot\hat{\beta}$, where $\hat{\beta} := (\beta_{n},\beta_{n-1},\dots,\beta_{1})$ is the reverse sorting of $\beta$, so that $v^{*}_{j} = \beta_{n+1-\sigma^{-1}(j)}$ for all $j\in N$. Thus, checking separation of a single core facet and its permuted copies is equivalent to checking whether $-\beta_{0} - \beta^{\prime}\bar{y} > \sum_{j=1}^{n}\beta_{n+1-\sigma^{-1}(j)}\bar{x}_{j}$, which runs in $O(n)$ time. The overall complexity then becomes $O(nt+n\log{n})$ due to the sorting of $\bar{x}$ initially and enumeration over $t$ core facets.
\end{proof}

As seen in the above proof, the overall complexity is composed of $nt$ additions and multiplications and a sorting step which requires $O(n\log{n})$ comparisons.
}

\section{Conditions for Core Facets}

A challenge in determining whether a valid core inequality is a core facet is the identification of a maximum number of affinely independent points within $\conv{G}$ that satisfies the inequality exactly. Of course, we can restrict attention to only those points that are extreme to $\conv{G}.$ The following proposition shows that the set of extreme points of $\conv{G}$ which satisfy a core  inequality exactly can be completely characterized in terms of the extreme points of the simplex $\simplex$.

\begin{lemma}	\thlabel{foundation45}
Given an extreme point $(\tilde{\x},m(\tilde{\x}))$ of $\conv{G}$ with $\tilde{\x}$ not an extreme point of $\simplex$, let $r$ be the smallest index $j$ such that $x_j =\ell,$ and $s$ be the largest index $j$ such that $x_j =u.$ Further let $p$ be the number of entries of $\tilde{\x}$ having value $u$ (so that $r \leq p \leq s-1$). Then $(\x,y)=(\tilde{\x},m(\tilde{\x}))$ satisfies a valid core inequality \eqref{sum24} exactly if and only if $(\x,y)=(\x^p,m(\x^p))$ satisfies the inequality exactly, and $\beta_r = \ldots = \beta_s.$
\end{lemma}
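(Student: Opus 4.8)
The plan is to use the symmetry of $m$ to reduce everything to the linear part of \eqref{sum24}, and then to extract both tightness conditions from the monotonicity $\beta_{1}\le\cdots\le\beta_{n}$. First I would observe that $\tilde{\x}$ and $\x^{p}$ are both vertices of the box with exactly $p$ coordinates equal to $u$, hence permutations of one another, so symmetry of $m$ gives $m(\tilde{\x})=m(\x^{p})$. Writing $U=\{j\in N\colon\tilde{x}_{j}=u\}$ with $|U|=p$ and substituting into the left-hand side of \eqref{sum24}, the $\beta^{\prime}m(\x^{p})$ terms cancel and a direct computation shows that the value of \eqref{sum24} at $(\tilde{\x},m(\tilde{\x}))$ minus its value at $(\x^{p},m(\x^{p}))$ equals
\[
(u-\ell)\Bigl(\sum_{j\in U}\beta_{j}-\sum_{j=1}^{p}\beta_{j}\Bigr).
\]

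Since $u>\ell$ and $\beta_{1}\le\cdots\le\beta_{n}$, the sum $\sum_{j\in U}\beta_{j}$ over $p$-element index sets $U$ is minimized by $U=\{1,\dots,p\}$, i.e.\ by $\x^{p}$, so the displayed difference is nonnegative. Hence the value at $(\tilde{\x},m(\tilde{\x}))$ is at least the value at $(\x^{p},m(\x^{p}))$, which is itself nonnegative by validity (\mythref{foundation}). It follows that $(\tilde{\x},m(\tilde{\x}))$ satisfies \eqref{sum24} exactly if and only if \emph{both} $(\x^{p},m(\x^{p}))$ does \emph{and} the difference vanishes, that is $\sum_{j\in U}\beta_{j}=\sum_{j=1}^{p}\beta_{j}$. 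This already yields the ``$(\x^{p},m(\x^{p}))$ tight'' half of the statement, so it remains to identify this equal-sum condition with $\beta_{r}=\cdots=\beta_{s}$.

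For that equivalence I would exploit the positional structure of $\tilde{\x}$: by the definitions of $r$ and $s$, the set $U$ contains $\{1,\dots,r-1\}$ and $s$, excludes $r$, and is contained in $\{1,\dots,s\}$. Thus $U=\{1,\dots,r-1\}\cup A$ with $A\subseteq\{r+1,\dots,s\}$, $s\in A$, and $|A|=p-r+1$. Cancelling the common block $\{1,\dots,r-1\}$ rewrites the equal-sum condition as $\sum_{j\in A}\beta_{j}=\sum_{j=r}^{p}\beta_{j}$, a comparison of two index sets of equal size $p-r+1$. Because the elements of $A$ are distinct integers at least $r+1$, its $i$-th smallest is at least $r+i$, while the $i$-th element of $\{r,\dots,p\}$ is exactly $r+i-1$; monotonicity of $\beta$ then gives a termwise domination summing to $\sum_{j\in A}\beta_{j}\ge\sum_{j=r}^{p}\beta_{j}$, with equality forcing $\beta$ to be constant across the successive intervals. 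A short induction on $i$ propagates this constancy from index $r$ up to $\max A=s$, giving $\beta_{r}=\cdots=\beta_{s}$; conversely, if $\beta_{r}=\cdots=\beta_{s}$ then $A$ and $\{r,\dots,p\}$ both lie in $[r,s]$ and have the same cardinality, so their $\beta$-sums agree.

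The symmetry reduction and the difference formula are routine; the main obstacle is the forward direction of the last equivalence, namely deducing from equality of the two sums that the coefficients are constant on the entire block $\{r,\dots,s\}$ and not merely on some subinterval. The termwise-domination-plus-induction argument is what secures this, and its correctness rests on the bound that the $i$-th smallest element of $A$ is at least $r+i$.
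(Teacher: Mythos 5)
Your proof is correct and takes essentially the same route as the paper's: both compare the value of the inequality at $(\tilde{x},m(\tilde{x}))$ with its value at $(x^{p},m(x^{p}))$ via the chain $\beta_0+\sum_j\beta_j\tilde{x}_j+\beta'm(\tilde{x})\ge \mathcal{L}_p(\beta_0,\beta)+\beta'm(x^{p})\ge 0$ and characterize when each link is tight. The only difference is that you spell out the termwise-domination and overlapping-intervals argument showing the first link is tight exactly when $\beta_{r}=\cdots=\beta_{s}$, a step the paper asserts without detail; your expanded justification is sound.
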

\begin{proof}
We have $\beta_0+\sum_{j=1}^n\beta_j\tilde{x}_j+\beta^{\prime}m(\tilde{\x}) \geq {\cal L}_p(\beta_0,\x[\beta])+\beta^{\prime}m(\x^p) \geq 0$, where the first inequality is due to the nondecreasing values of $\beta_j$ and the symmetry of $m(\x),$ and the second inequality is due to the validity of \eqref{sum24}. The first inequality is satisfied exactly if and only if $\beta_r = \ldots = \beta_s,$ while the second inequality is satisfied exactly if and only if \eqref{sum24} is satisfied exactly at $(\x^p,m(\x^p)).$
\end{proof}

A consequence is that given a valid core inequality, we can identify the subset of $2^n$ extreme points of $\conv{G}$ that satisfy the inequality exactly by considering only the $(n+1)$ extreme points of $\simplex.$ The next few results build upon this consequence to provide characteristics of core facets in terms of the extreme points of $\simplex.$

\begin{proposition}	\thlabel{foundation22}
A valid core inequality \eqref{sum24} is a core facet only if it is satisfied exactly at $(\x^k,m(\x^k))$ for at least two extreme points $\x^k$ of the simplex $\simplex,$ with at least one extreme  point not being $\x^0$ or $\x^n.$
\end{proposition}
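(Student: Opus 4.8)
The plan is to exploit that a core facet, being a facet of the full-dimensional polytope $\conv{G}\subset\real^{n+1}$, is an $n$-dimensional face and hence equals the convex hull of some $n+1$ affinely independent vertices of $\conv{G}$. The strategy is to locate all the vertices of $\conv{G}$ that lie on the facet using \mythref{foundation45}, and then rule out, by an affine-dimension count, the two ways in which the claimed condition could fail.

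First I would set $K=\{k\in\{0,\dots,n\}\colon (\x^{k},\mk)\text{ satisfies \eqref{sum24} exactly}\}$ and argue that every vertex $(\tilde{\x},m(\tilde{\x}))$ of $\conv{G}$ lying on the facet has its number of $u$-entries in $K$. Indeed, if $\tilde{\x}$ is an extreme point of $\simplex$ then $\tilde{\x}=\x^{k}$ for some $k$, and $k\in K$ by definition of $K$; otherwise \mythref{foundation45} forces $p\in K$, where $p$ is the number of entries of $\tilde{\x}$ equal to $u$. Consequently, writing $P_{k}:=\conv{Q_{k}}\times\{\mk\}$ as in the proof of \mythref{extform}, every facet vertex lies in $\bigcup_{k\in K}P_{k}$, since a point of $Q_{k}$ has exactly $k$ entries equal to $u$ and associated $y$-value $\mk$.

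The dimension count then finishes the argument. For $0<k<n$ the slab $P_{k}$ sits inside the two hyperplanes $\{\sum_{j}x_{j}=ku+(n-k)\ell\}$ and $\{y=\mk\}$, so its affine hull has dimension $n-1$; for $k\in\{0,n\}$ the set $P_{k}$ is the single point $(\x^{k},\mk)$ because $Q_{0}=\{\x^{0}\}$ and $Q_{n}=\{\x^{n}\}$. If $|K|=1$, the entire facet lies in one $P_{k}$ of affine dimension at most $n-1$, so it cannot contain $n+1$ affinely independent points, a contradiction; hence $|K|\ge 2$. If instead $K=\{0,n\}$, then the only facet vertices are $(\x^{0},m(\x^{0}))$ and $(\x^{n},m(\x^{n}))$, yielding at most two affinely independent points, fewer than the $n+1\ge 3$ required since $n\ge 2$, again a contradiction. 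Therefore at least two indices lie in $K$ and at least one of them is interior, i.e.\ different from $0$ and $n$, which is precisely the asserted necessary condition.

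I expect the only real content to be the grouping step, which rests entirely on \mythref{foundation45}: once one knows that permuting a vertex never changes its $u$-count and that the exactly-satisfied off-simplex vertices are governed by the corresponding $\x^{p}$, the facet vertices are confined to the low-dimensional slabs $P_{k}$, and everything else is routine bookkeeping of affine dimensions together with the small-dimensional edge cases $k\in\{0,n\}$.
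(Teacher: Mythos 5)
Your proof is correct and follows essentially the same route as the paper: both use \mythref{foundation45} to confine the facet's vertices to the permutation classes indexed by $K$, and both get the contradiction from the same dimension count (each class with $0<k<n$ lying in the two hyperplanes $\sum_j x_j = ku+(n-k)\ell$ and $y=\mk$, and the classes for $k\in\{0,n\}$ being singletons). The only cosmetic difference is that the paper explicitly treats $K=\emptyset$ as a separate case, which your argument covers implicitly since a facet must contain vertices.
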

\begin{proof}
Suppose that a valid core inequality \eqref{sum24} is satisfied exactly at fewer than two such points $(\x^k,m(\x^k)),$ or at only the points $(\x^0,m(\x^0))$ and $(\x^n,m(\x^n)).$ Since the dimension of $\conv{G}$ is $n+1,$ it is sufficient to show that the inequality is satisfied exactly at no more than $n$ affinely independent extreme points of $\conv{G}.$ Three cases arise. First, if the inequality is satisfied exactly at no such $(\x^k,m(\x^k)),$ then \eqref{sum24} is not satisfied exactly at any of the $2^n$ extreme points of $\conv{G}$ by \mythref{foundation45}. Second, if the inequality is satisfied exactly at precisely one such $(\x^{\tilde{k}},m(\x^{\tilde{k}}))$ then, by \mythref{foundation45}, the only extreme points $(\x,m(\x))$ of $\conv{G}$ that can possibly satisfy \eqref{sum24} exactly are the $n \choose \tilde{k}$ points with $\x$ having $\tilde{k}$ entries of value $u$ and $(n-\tilde{k})$ entries of value $\ell.$ However, as the two linearly independent hyperplanes $\sum_{j=1}^nx_j=\tilde{k}u+(n-\tilde{k})\ell$ and $y=m(\x^{\tilde{k}})$ both pass through these $n \choose \tilde{k}$ points, there exist at most $n$ such affinely independent points. Finally, if the inequality is satisfied exactly at only $(\x^0,m(\x^0))$ and $(\x^n,m(\x^n)),$ then \mythref{foundation45} gives us that these are the only extreme points of $\conv{G}$ that satisfy \eqref{sum24} exactly.
\end{proof}

The necessary condition in  \mythref{foundation22} to be a core facet is also sufficient when all the coefficients are equal.

\begin{proposition}	\thlabel{equalcoeff}
A valid core inequality \eqref{sum24} with $\beta_1 = \ldots = \beta_n$ is a core facet if and only if it is satisfied exactly at $(\x^k,m(\x^k))$ for at least two extreme points $\x^k$ of the simplex $\simplex,$ with at least one extreme point not being $\x^0$ or $\x^n.$
\end{proposition}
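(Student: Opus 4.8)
The only-if direction is precisely \mythref{foundation22}, so the task is to prove sufficiency. Suppose \eqref{sum24} is a valid core inequality with $\beta_1 = \cdots = \beta_n =: \bar\beta$ that is satisfied exactly at $(\x^{k_1},\mk[k_1])$ and $(\x^{k_2},\mk[k_2])$ for two distinct indices $k_1, k_2$, at least one of which is interior (i.e. lies strictly between $0$ and $n$). Since $\dim\conv{G} = n+1$, it suffices to exhibit $n+1$ affinely independent extreme points of $\conv{G}$ that satisfy \eqref{sum24} exactly; such a set certifies that the inequality defines a facet, and being a valid core inequality it is then a core facet.

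The mechanism driving the argument is that equal coefficients promote tightness from a single point to an entire symmetry layer. For any $\tilde\x\in Q_{k}$, that is any box-vertex with exactly $k$ coordinates equal to $u$, symmetry of $m$ gives $m(\tilde\x) = \mk$, while equality of the coefficients gives $\sum_{j=1}^{n}\beta_j\tilde x_j = \bar\beta(ku+(n-k)\ell)$, so that the left-hand side of \eqref{sum24} evaluated at $(\tilde\x,m(\tilde\x))$ equals ${\cal L}_k(\beta_0,\x[\beta]) + \beta^{\prime}\mk$, independent of the choice of $\tilde\x\in Q_{k}$. (Equivalently, this follows from \mythref{foundation45}, whose condition $\beta_r = \cdots = \beta_s$ is vacuous here.) Hence tightness at $\x^{k}$ forces every point of $\{(\x,\mk)\colon \x\in Q_{k}\}$ onto the supporting hyperplane.

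I would then pick an interior tight index $k^{*}\in\{k_1,k_2\}$ with $0 < k^{*} < n$, which exists by hypothesis. The layer $Q_{k^{*}}$ consists of all box-vertices with exactly $k^{*}$ entries of $u$, and its affine hull is the hyperplane $\{\x\colon \sum_{j}x_j = k^{*}u + (n-k^{*})\ell\}$ of dimension $n-1$: interchanging a $u$-entry in position $i$ with an $\ell$-entry in position $j$ produces a difference vector proportional to $e_i - e_j$, and these span $\{v\colon\sum_j v_j = 0\}$ (such swaps remain inside $Q_{k^{*}}$ precisely because $0 < k^{*} < n$). Consequently $\{(\x,\mk[k^*])\colon \x\in Q_{k^{*}}\}$ contains $n$ affinely independent tight extreme points of $\conv{G}$, all lying in the two hyperplanes $\sum_j x_j = k^{*}u + (n-k^{*})\ell$ and $y = \mk[k^*]$.

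These $n$ points span only dimension $n-1$, so one additional point is needed, and the second tight index supplies it. Taking any single $(\hat\x,\mk[k'])$ with $\hat\x\in Q_{k'}$ for the remaining index $k'\in\{k_1,k_2\}\setminus\{k^{*}\}$, its coordinate sum $k'u+(n-k')\ell$ differs from $k^{*}u+(n-k^{*})\ell$ because $k'\neq k^{*}$; thus $\hat\x$ fails the defining equation of the affine hull of the $Q_{k^{*}}$ points and is affinely independent from them. This yields $n+1$ affinely independent tight extreme points, as required. I expect the only delicate point to be the affine-dimension count for $Q_{k^{*}}$, and I flag that the boundary case $k'\in\{0,n\}$ (where $Q_{k'}$ degenerates to the singleton $\{\x^0\}$ or $\{\x^n\}$) causes no difficulty, since a single point from that layer is all the construction requires.
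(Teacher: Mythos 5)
Your proof is correct and follows the same overall strategy as the paper's: the only-if direction is delegated to \mythref{foundation22}, and the if direction is settled by producing $n$ affinely independent tight extreme points in the layer $Q_{k^{*}}$ attached to an interior tight index (all of which are tight because equal coefficients together with the symmetry of $m$ make the left side of \eqref{sum24} constant on the layer, exactly as \mythref{foundation45} with its hypothesis vacuous), and then adjoining one tight point from the second layer, which is affinely independent of the first $n$ because its coordinate sum differs. The one place where you genuinely diverge is the justification that the layer contains $n$ affinely independent points. The paper takes the ${n \choose p}$ layer points, normalizes $(\ell,u)$ to $(0,1)$, and computes the rank of the Gram matrix $AA^{T}$, which has ${{n-1} \choose {p-1}}$ on the diagonal and ${{n-2} \choose {p-2}}$ elsewhere, concluding $\mbox{rank}(A)=n$ (with a separate easy case for $p=1$). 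You instead observe that swapping a $u$-entry with an $\ell$-entry stays inside $Q_{k^{*}}$ precisely because $0<k^{*}<n$, and that such swaps realize difference vectors proportional to $e_{i}-e_{j}$ for every pair $(i,j)$; these span the zero-sum subspace, so the affine hull of $Q_{k^{*}}$ is the full hyperplane $\sum_{j}x_{j}=k^{*}u+(n-k^{*})\ell$ of dimension $n-1$. Your route is more elementary, avoiding the rank computation and the $p=1$ case split, and it identifies the affine hull of the layer exactly; the paper's computation, in exchange, exhibits concretely which points form an affinely independent set. Your handling of the degenerate case $k'\in\{0,n\}$, where the second layer is a singleton, is also correct since only one point is drawn from it.
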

\begin{proof}
The only if direction follows directly from \mythref{foundation22}, and so we consider the if direction. Suppose the inequality is satisfied exactly at two such points $(\x^p,m(\x^p))$ and $(\x^q,m(\x^q)),$ with $\x^p$ not being $\x^0$ or $\x^n.$ Then \mythref{foundation45} gives us that the $n \choose p$ extreme points $(\x,m(\x))$ of $\conv{G}$ with $\x$ having $p$ entries of value $u$ and $(n-p)$ entries of value $\ell$ satisfy the inequality exactly. The proof is to show that there exist $n$ affinely independent points from amongst this set of $n \choose p$ points. Then these $n$ points, together with $(\x^q,m(\x^q)),$ will form an affinely independent set of $(n+1)$ points because each of the first $n$ points satisfies the equation $\sum_{j=1}^nx_j=pu+(n-p)\ell,$ but $(\x^q,m(\x^q))$ does not. In fact, it is sufficient to show that the extreme points of $X$ associated with these $n \choose p$ points are affinely independent.

Since the affine independence of a collection of points is unaffected when the same value is subtracted from every entry of each point, and when each point is multiplied by a nonzero scalar, the affine independence of the associated $n \choose p$ extreme points of $X$ remains unchanged when every $u$ is replaced with 1 and every $\ell$ is replaced with $0.$ Consider the $n \times {{n} \choose {p}}$ matrix $A$ defined so that each column corresponds to one such point, upon application of these operations. If $p=1,$ then $A$ is a permutation matrix, and the $n \choose p$ points are affinely independent. Otherwise, $A$ has the two properties that: each row contains $\kappa_1 \equiv  {{n-1} \choose {p-1}}$ entries of value 1, and every pair of two distinct rows contains $\kappa_2 \equiv  {{n-2} \choose {p-2}}$ common entries of value $1.$ Hence, $AA^T$ is the $n \times n$ matrix having $\kappa_1$ along the main diagonal and $\kappa_2$ elsewhere. Since $\mbox{rank}(AA^T)=n,$ because the common row sum allows us to subtract $\kappa_2$ from every entry of $AA^T$ to obtain a lower-triangular matrix with $(\kappa_1-\kappa_2) \neq 0$ along the diagonal, then $\mbox{rank}(A)=n,$ and the proof is complete.
\end{proof}

The below proposition gives further conditions, in terms of the extreme points $\x^k$ of $\simplex,$ for a valid core inequality to be a core facet.

\begin{proposition}	\thlabel{foundation24}
Given any two valid core inequalities of the form 
\begin{equation}
\bar{\beta}_0+\sum_{j=1}^n\bar{\beta}_jx_j+\beta^{\prime}y \geq 0 \mbox{ and } \hat{\beta}_0+\sum_{j=1}^n\hat{\beta}_jx_j+\beta^{\prime}y \geq 0, \label{firsti}
\end{equation}
if 
\begin{equation}
{\cal L}_k(\bar{\beta}_0,\bar{\x[\beta]}) \leq {\cal L}_k(\hat{\beta}_0,\hat{\x[\beta]}) \; \forall \; k \in \{0, \ldots, n\}, \label{secondi} 
\end{equation}
with strict inequality holding for at least one $k$ in \eqref{secondi}, then the right inequality of \eqref{firsti} is not a facet.
\end{proposition}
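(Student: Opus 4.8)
The plan is to argue by contradiction: assume the right (hatted) inequality of \eqref{firsti} is a facet, and show this forces it to coincide with the left (barred) inequality, contradicting the strict inequality in \eqref{secondi}. Write $D_k := {\cal L}_k(\hat{\beta}_0,\hat{\x[\beta]}) - {\cal L}_k(\bar{\beta}_0,\bar{\x[\beta]}) \ge 0$ for the gaps in \eqref{secondi}, so that $D_{k_0}>0$ for some $k_0$. The engine of the whole argument is the identity ${\cal L}_k - {\cal L}_{k-1} = (u-\ell)\beta_k$, read directly off \eqref{anoto25}; since $u>\ell$, this turns the monotonicity $\beta_1\le\cdots\le\beta_n$ of a core inequality into the statement that $k\mapsto {\cal L}_k$ has nondecreasing increments, and conversely recovers each $\beta_k$ from consecutive ${\cal L}$-values.

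First I would show that every extreme point of $\conv{G}$ tight for the hatted inequality is also tight for the barred one. For a simplex vertex $\x^k$ this is immediate: if the hatted inequality is tight at $(\x^k,m(\x^k))$ then ${\cal L}_k(\hat{\beta}_0,\hat{\x[\beta]})+\beta' m(\x^k)=0$, while validity of the barred inequality (\mythref{foundation}) gives ${\cal L}_k(\bar{\beta}_0,\bar{\x[\beta]})+\beta' m(\x^k)\ge 0$; subtracting yields $D_k\le 0$, and with $D_k\ge 0$ we conclude $D_k=0$, so the barred inequality is tight at $\x^k$ as well.

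The hard part is extending this to a box vertex $\tilde{\x}$ that is not a simplex vertex, and this is where \mythref{foundation45} and a convexity argument come in. By \mythref{foundation45} applied to the hatted inequality, tightness at $\tilde{\x}$ means $\hat{\beta}_r=\cdots=\hat{\beta}_s$ together with tightness at $\x^p$, where $r\le p\le s-1$. To deduce tightness of the barred inequality at $\tilde{\x}$ from \mythref{foundation45}, I need tightness at $\x^p$ (already supplied by the simplex-vertex step, since it forces $D_p=0$) and the block equality $\bar{\beta}_r=\cdots=\bar{\beta}_s$. For the latter, observe that on the block $\{r,\dots,s\}$ the differences $\gamma_j:=\hat{\beta}_j-\bar{\beta}_j = (D_j-D_{j-1})/(u-\ell)$ are nonincreasing, since $\hat{\beta}_j$ is constant there while $\bar{\beta}_j$ is nondecreasing; equivalently, $D$ is concave on $\{r-1,\dots,s\}$. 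But $D\ge 0$ throughout and $D_p=0$ at the interior index $p$, and a nonnegative concave sequence vanishing at an interior point must vanish on the whole block. Hence $D_{r-1}=\cdots=D_s=0$, which forces $\gamma_r=\cdots=\gamma_s=0$, i.e. $\bar{\beta}_r=\cdots=\bar{\beta}_s$, exactly the block equality needed.

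Finally I would assemble the contradiction. If the hatted inequality is a facet, its tight face has dimension $n$, so it contains $n+1$ affinely independent extreme points of $\conv{G}$; by the previous two steps these all lie on the barred hyperplane $\bar{\beta}_0+\bar{\x[\beta]}^{\top}x+\beta' y=0$. Since these $n+1$ points affinely span the hatted hyperplane yet lie on the barred one, the two hyperplanes coincide; and because both carry the same coefficient $\beta'=\pm 1$ on $y$, their normals are equal rather than merely parallel, giving $(\hat{\beta}_0,\hat{\x[\beta]})=(\bar{\beta}_0,\bar{\x[\beta]})$ and hence $D_k=0$ for all $k$, contradicting $D_{k_0}>0$. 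I expect the concavity/interior-zero step that yields $\bar{\beta}_r=\cdots=\bar{\beta}_s$ to be the main obstacle, since it is precisely what lets the comparison at the $n+1$ simplex vertices propagate to all $2^n$ box vertices via \mythref{foundation45}.
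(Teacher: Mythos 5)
Your proposal is correct and follows essentially the same route as the paper: transfer tightness from the hatted to the barred inequality first at the simplex vertices $\x^k$ via validity plus \eqref{secondi}, then at general box vertices via \mythref{foundation45}, and conclude that a facet for the hatted inequality would force the two inequalities to coincide. The only difference is cosmetic: where the paper derives $\bar{\beta}_r=\cdots=\bar{\beta}_s$ by sandwiching $\hat{\beta}^*$ between the two block averages of $\bar{\x[\beta]}$ (inequalities \eqref{balance3}--\eqref{balance4}), you obtain it from concavity of the gap sequence $D_k$ on $\{r-1,\dots,s\}$ together with the interior zero $D_p=0$ --- a clean repackaging of the same computation.
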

\begin{proof}
It is sufficient to show that every extreme point $(\x,m(\x))$ of $\conv{G}$ that satisfies the right inequality of \eqref{firsti} exactly also satisfies the left inequality of \eqref{firsti} exactly. This statement holds for the $(n+1)$ extreme points $(\x^k,m(\x^k)), k \in \{0, \ldots, n\},$ by \eqref{secondi}, and
so we arbitrarily select any one of the remaining $2^n-(n+1)$ extreme points, say $(\tilde{\x},m(\tilde{\x})),$ and suppose that the right inequality holds exactly at this point. Some $p \in \{1, \ldots, n-1\}$ entries of $\tilde{\x}$ have value $u$ and $(n-p)$ entries have value $\ell,$ with the first $p$ entries not all equal to $u.$ 
The proof reduces to showing that 
\begin{equation}
0={\cal L}_p(\hat{\beta}_0,\hat{\x[\beta]})+\beta^{\prime}m(\x^p)={\cal L}_p(\bar{\beta}_0,\bar{\x[\beta]})+\beta^{\prime}m(\x^p)=\bar{\beta}_0+\sum_{j=1}^n\bar{\beta}_j\tilde{x}_j+\beta^{\prime}m(\tilde{\x}).
\label{summary}
\end{equation}
Each equality of \eqref{summary} is considered separately.
\begin{itemize}
\item Since, by assumption, $(\tilde{\x},m(\tilde{\x}))$ satisfies the right inequality of \eqref{firsti} exactly, \mythref{foundation45} gives us that the first equality of \eqref{summary} holds true, and also that $\hat{\beta}^*=\hat{\beta}_r = \ldots =\hat{\beta}_s$ for some scalar $\hat{\beta}^*,$ where $r$ and $s$ are, respectively, the indices of the first and last entries of $\tilde{\x}$ which differ from $\x^p.$
\item The second equality of \eqref{summary} holds true by \eqref{secondi} with $k=p,$ the first equality of \eqref{summary}, and the validity of the left inequality of \eqref{firsti} at $(\x^p,m(\x^p)).$ 
\item
 Since as noted above, $\hat{\beta}^*=\hat{\beta}_r = \ldots =\hat{\beta}_s$ for some scalar $\hat{\beta}^*,$ we have that
\begin{equation}
{\cal L}_p(\hat{\beta}_0,\hat{\x[\beta]})+\hat{\beta}^*(u-\ell)(s-p)={\cal L}_s(\hat{\beta}_0,\hat{\x[\beta]}) \geq {\cal L}_s(\bar{\beta}_0,\bar{\x[\beta]})={\cal L}_p(\bar{\beta}_0,\bar{\x[\beta]})+(u-\ell)(\sum_{j=p+1}^s\bar{\beta}_j) \label{balance3}
\end{equation}
and
\begin{equation}
{\cal L}_p(\hat{\beta}_0,\hat{\x[\beta]})-\hat{\beta}^*(u-\ell)(p-r+1)={\cal L}_{r-1}(\hat{\beta}_0,\hat{\x[\beta]}) \geq {\cal L}_{r-1}(\bar{\beta}_0,\bar{\x[\beta]})={\cal L}_p(\bar{\beta}_0,\bar{\x[\beta]})-(u-\ell)(\sum_{j=r}^p\bar{\beta}_j), \label{balance4}
\end{equation}
where, within \eqref{balance3} and \eqref{balance4}, the equalities follow from \eqref{anoto25}, and the inequalities are due to \eqref{secondi}. Combine these expressions and invoke ${\cal L}_p(\hat{\beta}_0,\hat{\x[\beta]})={\cal L}_p(\bar{\beta}_0,\bar{\x[\beta]})$ from the second equality of \eqref{summary} to obtain 
\begin{equation}
\hat{\beta}^* \leq \left(\frac{1}{p-r+1}\right)\left(\sum_{j=r}^p\bar{\beta}_j\right) \leq \left(\frac{1}{s-p}\right)\left(\sum_{j=p+1}^s\bar{\beta}_j\right) \leq \hat{\beta}^*, \nonumber
\end{equation}
where the three inequalities follow from \eqref{balance4}, the nondecreasing property of $\bar{\x[\beta]},$ and \eqref{balance3}, respectively. Again by the nondecreasing property of $\bar{\x[\beta]},$ we have that $\hat{\beta}^*=\bar{\beta}_r = \ldots =\bar{\beta}_s,$ giving the third equality of \eqref{summary} by \mythref{foundation45}.\qedhere
\end{itemize}
\end{proof}

This leads us to the following necessary and sufficient condition for a valid inequality with distinct coefficients to be a core facet.

\begin{corollary}\label{sfirst}
A valid inequality $\bar{\beta}_0+\sum_{j=1}^n\bar{\beta}_jx_j+\beta^{\prime}y \geq 0$ with $\bar{\beta}_1 < \ldots < \bar{\beta}_n$ is a core facet if and only if it is satisfied exactly at $(\x^k,m(\x^k))$ for $k \in\{0, \ldots, n\}$. In this case, no other core facet can exist with the given $\beta^{\prime}.$
\end{corollary}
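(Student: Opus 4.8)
The plan is to treat the two implications separately and then the uniqueness clause, relying on \mythref{foundation45} to control which vertices of $\conv{G}$ can be tight and on \mythref{foundation24} for uniqueness; throughout, recall that $\conv{G}$ has dimension $n+1$. For the ``if'' direction I would first observe that the points $\x^0,\dots,\x^n$ of \eqref{def:xk} are the extreme points of the $n$-dimensional simplex $\simplex$, hence affinely independent in $\real^n$, and that appending the coordinate $y=\mk$ preserves affine independence, so the $n+1$ lifted vertices $(\x^k,\mk)$ are affinely independent in $\real^{n+1}$. If the inequality is tight at all of them, its zero-set is a supporting hyperplane of $\conv{G}$ carrying $n+1$ affinely independent points; since full-dimensionality forbids the inequality from being tight on all of $\conv{G}$, this hyperplane meets $\conv{G}$ in a proper face, which therefore has dimension exactly $n$. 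Hence the inequality is a core facet.

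For the ``only if'' direction I would argue that strict monotonicity of the coefficients prevents any vertex outside $\simplex$ from being tight. Suppose $(\tilde{\x},m(\tilde{\x}))$ is a tight extreme point of $\conv{G}$ with $\tilde{\x}$ not a vertex of $\simplex$; then \mythref{foundation45} forces $\bar{\beta}_r=\dots=\bar{\beta}_s$, and since that lemma guarantees $r\le p\le s-1$, we have $r<s$, contradicting $\bar{\beta}_1<\dots<\bar{\beta}_n$. Thus the only candidate tight vertices are the $n+1$ points $(\x^k,\mk)$, $k=0,\dots,n$. Because the inequality is assumed to be a facet, it must be tight at $n+1$ affinely independent vertices of $\conv{G}$; as only these $n+1$ vertices are available, every one of them must be tight, which is exactly the claimed exactness at each $\x^k$.

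For the uniqueness clause I would solve the tightness system explicitly and then invoke \mythref{foundation24}. Exactness at every $\x^k$ reads ${\cal L}_k(\bar{\beta}_0,\bar{\beta})=-\beta'\mk$ for $k=0,\dots,n$, and taking consecutive differences in \eqref{anoto25} gives $(u-\ell)\bar{\beta}_k=-\beta'(\mk-\mk[k-1])$, so each $\bar{\beta}_k$ with $k\ge 1$, and then $\bar{\beta}_0$ from the $k=0$ equation, are determined uniquely by $\beta'$. Now let $\hat{\beta}$ be any core facet sharing this $\beta'$. Its validity at each $\x^k$ yields ${\cal L}_k(\hat{\beta}_0,\hat{\beta})\ge -\beta'\mk={\cal L}_k(\bar{\beta}_0,\bar{\beta})$ for all $k$; were this strict for some $k$, \mythref{foundation24} would deny $\hat{\beta}$ facet status, so equality holds throughout, and the explicit solve above forces $\hat{\beta}=\bar{\beta}$.

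The main obstacle is the ``only if'' step: the crux is recognizing that strict increase of the coefficients, through \mythref{foundation45}, collapses the set of possibly-tight vertices down to exactly the $n+1$ simplex vertices, after which the affine-independence count is automatic. Everything else is bookkeeping on \eqref{anoto25} together with a direct application of \mythref{foundation24}.
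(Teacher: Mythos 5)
Your proof is correct and follows essentially the same route as the paper's: affine independence of the $n+1$ lifted simplex vertices for the ``if'' direction, \mythref{foundation45} to rule out tightness at any vertex outside $\simplex$ for the ``only if'' direction, and \mythref{foundation24} for the uniqueness clause. The only difference is that you spell out a step the paper leaves implicit---that equality of all the ${\cal L}_k$ values forces the coefficient vectors to coincide, so the case not covered by \mythref{foundation24} is harmless---which is a minor but welcome elaboration.
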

\begin{proof}
The if direction follows from the $n+1$ points $(\x^k,m(\x^k))$ for $k \in\{0, \ldots, n\}$ being affinely independent, and so we consider the only if direction. \mythref{foundation45} gives us that the only extreme points to $\conv{G}$ that can possibly satisfy the given inequality exactly are $(\x^k,m(\x^k))$ for $k \in\{0, \ldots, n\}.$ Since the inequality is a facet, these $(n+1)$ affinely independent points must satisfy the inequality exactly, giving ${\cal L}_k(\bar{\beta}_0,\bar{\x[\beta]})+\beta^{\prime}m(\x^k)=0$ for $k \in\{0, \ldots, n\}.$ No other valid inequality $\hat{\beta}_0+\sum_{j=1}^n\hat{\beta}_jx_j+\beta^{\prime}y \geq 0$ for $\conv{G}$ with the given $\beta^{\prime}$ can then be a facet by \mythref{foundation24}.
\end{proof}

Another set of necessary conditions for being a core facet is obtained below.

\begin{proposition}	\thlabel{equalcoeff7}
Given any two valid core inequalities of the form \eqref{firsti}, suppose that the following conditions hold.
\begin{enumerate}
\item $\bar{\beta}_u=\bar{\beta}_v$ for each $(u,v),u<v,$ having $\hat{\beta}_u=\hat{\beta}_v.$  
\item $\{k:{\cal L}_k(\hat{\beta}_0,\hat{\x[\beta]})+\beta^{\prime}m(\x^k)=0\} \subset  \{k:{\cal L}_k(\bar{\beta}_0,\bar{\x[\beta]})+\beta^{\prime}m(\x^k)=0\}.$
\end{enumerate}
Then the right inequality of \eqref{firsti} is not a facet.
\end{proposition}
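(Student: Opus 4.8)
The plan is to argue, exactly as in the proof of \mythref{foundation24}, that every extreme point of $\conv{G}$ satisfying the right (hat) inequality of \eqref{firsti} exactly also satisfies the left (bar) inequality exactly, while at least one extreme point satisfies the bar inequality exactly but not the hat inequality. This shows that the face cut out by the hat inequality is a proper subset of the face cut out by the bar inequality. Since the latter is a proper face of the $(n+1)$-dimensional polytope $\conv{G}$ — its defining affine functional is nonzero because $\beta^{\prime}=\pm 1$, so it vanishes only on a hyperplane — that face has dimension at most $n$, and a face properly contained in another face has strictly smaller dimension; hence the hat inequality defines a face of dimension at most $n-1$ and is not a facet.

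To carry out the containment, I would take an arbitrary extreme point $(\tilde{\x},m(\tilde{\x}))$ of $\conv{G}$ that satisfies the hat inequality exactly and let $p$ be the number of entries of $\tilde{\x}$ equal to $u$. If $\tilde{\x}$ is an extreme point of $\simplex$, then $\tilde{\x}=\x^p$ and tightness means $p\in\{k:{\cal L}_k(\hat{\beta}_0,\hat{\x[\beta]})+\beta^{\prime}m(\x^k)=0\}$; Condition~2 then places $p$ in the analogous index set for the bar inequality, so the bar inequality is tight at $(\x^p,m(\x^p))$. If $\tilde{\x}$ is not an extreme point of $\simplex$, let $r$ and $s$ be as in \mythref{foundation45}. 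Applied to the hat inequality, that lemma gives both that the hat inequality is tight at $(\x^p,m(\x^p))$ — so again $p$ lies in the hat index set and, by Condition~2, in the bar index set — and that $\hat{\beta}_r=\ldots=\hat{\beta}_s$. Condition~1 translates the latter into $\bar{\beta}_r=\ldots=\bar{\beta}_s$, since every pair of indices $i<j$ in $\{r,\ldots,s\}$ satisfies $\hat{\beta}_i=\hat{\beta}_j$ and hence $\bar{\beta}_i=\bar{\beta}_j$. Now \mythref{foundation45} applied in the reverse direction to the bar inequality — tightness at $(\x^p,m(\x^p))$ together with $\bar{\beta}_r=\ldots=\bar{\beta}_s$ — yields that the bar inequality is tight at $(\tilde{\x},m(\tilde{\x}))$, completing the containment.

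For strictness, Condition~2 is a proper inclusion, so there is an index $k^\ast$ with ${\cal L}_{k^\ast}(\bar{\beta}_0,\bar{\x[\beta]})+\beta^{\prime}m(\x^{k^\ast})=0$ but ${\cal L}_{k^\ast}(\hat{\beta}_0,\hat{\x[\beta]})+\beta^{\prime}m(\x^{k^\ast})\neq 0$. The extreme point $(\x^{k^\ast},m(\x^{k^\ast}))$ then lies on the bar face but not on the hat face, so the inclusion of faces is strict, and the dimension argument above applies.

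I expect the main obstacle to be bookkeeping rather than anything conceptual: correctly invoking the ``iff'' of \mythref{foundation45} in both directions — once to extract $\hat{\beta}_r=\ldots=\hat{\beta}_s$ from tightness of the hat inequality, and once to \emph{deduce} tightness of the bar inequality from $\bar{\beta}_r=\ldots=\bar{\beta}_s$ — and reading Condition~1 as precisely the statement that the equal-coefficient block of $\hat{\x[\beta]}$ on $\{r,\ldots,s\}$ forces the corresponding block of $\bar{\x[\beta]}$ to be constant. One should also confirm at the outset that the bar face is a proper face, which follows from $\beta^{\prime}\neq 0$ and the full-dimensionality of $\conv{G}$, so that the drop in dimension is legitimate.
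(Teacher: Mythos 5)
Your proof is correct and takes essentially the same route as the paper's: both establish that the set of extreme points of $\conv{G}$ tight for the hat inequality is contained in that for the bar inequality, handling the extreme points of $\simplex$ via Condition~2 and the remaining $2^n-(n+1)$ extreme points by invoking \mythref{foundation45} forward for the hat inequality, translating $\hat{\beta}_r=\ldots=\hat{\beta}_s$ into $\bar{\beta}_r=\ldots=\bar{\beta}_s$ via Condition~1, and then invoking \mythref{foundation45} backward for the bar inequality. The only difference is that you spell out the concluding face-dimension argument (using the properness of the inclusion in Condition~2), which the paper leaves implicit by declaring the containment ``sufficient''; that added care is welcome but not a departure in method.
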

\begin{proof}
As with the proof of \mythref{foundation24}, it is sufficient to show that every extreme point of $\conv{G}$ that satisfies the right inequality of \eqref{firsti} exactly also satisfies the left inequality of \eqref{firsti} exactly. This statement holds for the $(n+1)$ extreme points $(\x^k,m(\x^k)), k \in\{0, \ldots, n\},$ by Condition 2 above, and
so we arbitrarily select any one of the remaining $2^n-(n+1)$ extreme points, say $(\tilde{\x},m(\tilde{\x})),$ and suppose that the right inequality holds exactly at this point. Some $p \in \{1, \ldots, n-1\}$ entries of $\tilde{\x}$ have value $u$ and $(n-p)$ entries have value $\ell,$ with the first $p$ entries not all equal to $u.$ By \mythref{foundation45}, we have that $\hat{\beta}_r = \ldots = \hat{\beta}_s$ where $r$ and $s$ are, respectively, the indices of the first and last entries of $\tilde{\x}$ which differ from $\x^p.$ The first condition above then gives $\bar{\beta}_r = \ldots = \bar{\beta}_s,$ so that by again invoking \mythref{foundation45}, we have the left inequality of \eqref{firsti} is satisfied exactly at $(\tilde{\x},m(\tilde{\x})).$
\end{proof}

The below result identifies instances in which select coefficients of a core facet must have equal values. We do not need to consider $r=n$ or $s=0$ because such cases are not possible by \mythref{foundation22}. 

\begin{proposition} \thlabel{equalcoeff8}
Given a core facet \eqref{sum24}, let 
\[
\begin{split}
r&=\min\left\{k \in \{0, \ldots, n-1\}\colon {\cal L}_k(\beta_0,\x[\beta]) +\beta^{\prime} m(\x^k)=0\right\}, \\ 
s&=\max\left\{k \in \{1, \ldots, n\}\colon{\cal L}_k(\beta_0,\x[\beta]) +\beta^{\prime} m(\x^k)=0\right\}. 
\end{split}
\]
Then $\beta_1 = \ldots = \beta_{r+1}$ and $\beta_{s} = \ldots = \beta_n.$
\end{proposition}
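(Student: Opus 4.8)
The plan is to argue the contrapositive of each claim through a dimension count on the extreme points of $\conv{G}$ lying on the facet. Write $T=\{k\in\{0,\dots,n\}\colon {\cal L}_k(\beta_0,\x[\beta])+\beta^{\prime}m(\x^k)=0\}$ for the set of tight indices, so that $r=\min(T\setminus\{n\})$ and $s=\max(T\setminus\{0\})$; both are well defined with $r\le n-1$ and $s\ge 1$ because \mythref{foundation22} guarantees a tight $\x^k$ with $k\in\{1,\dots,n-1\}$. Since $\conv{G}$ has dimension $n+1$, a facet must contain $n+1$ affinely independent extreme points of $\conv{G}$ satisfying \eqref{sum24} exactly. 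By \mythref{foundation45}, every such tight extreme point $(\tilde{\x},m(\tilde{\x}))$ arises from some $p\in T$: its vector $\tilde{\x}$ has $p$ entries equal to $u$ and differs from $\x^p$ only within the maximal block of indices on which $\x[\beta]$ is constant that contains $p$. The engine of the proof is that if either string of equalities fails, then all tight extreme points are pinned to the common value $u$ (resp.\ $\ell$) on an entire block, dropping their affine dimension below $n$.

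I would treat $\beta_s=\dots=\beta_n$ first, as it is the cleaner half. Suppose to the contrary $\beta_s<\beta_n$, and let $c=\min\{j\colon\beta_j=\beta_n\}$ be the start of the top block, so $\beta_{c-1}<\beta_c$ and, since $\beta_s<\beta_c$, also $s<c$. Every $p\in T$ satisfies $p\le s<c$, so $\x^p$ equals $\ell$ on all indices $j\ge c$ and the block containing $p$ lies in $\{1,\dots,c-1\}$; by the description above no admissible rearrangement disturbs the coordinates indexed by $[c,n]$. Hence every tight extreme point satisfies $x_j=\ell$ for all $j\in[c,n]$. These $n-c+1$ coordinate equations, together with the facet equation \eqref{sum24} (which is independent of them because $\beta^{\prime}=\pm1\neq0$), confine the tight points to an affine space of dimension at most $(n+1)-(n-c+1)-1=c-1<n$, contradicting that \eqref{sum24} is a facet. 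Therefore $\beta_s=\beta_n$, and monotonicity yields $\beta_s=\dots=\beta_n$.

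The claim $\beta_1=\dots=\beta_{r+1}$ runs along the same lines with the bottom block $[1,b]$, where $b=\max\{j\colon\beta_j=\beta_1\}$. Assuming $\beta_1<\beta_{r+1}$ forces $b\le r$, and every $p\in T$ has $p\ge r\ge b$, so $\x^p$ equals $u$ throughout $[1,b]$; I would then show that every tight extreme point retains the value $u$ on all of $[1,b]$, yielding affine dimension at most $n-b<n$ and the contradiction. The main obstacle is exactly the boundary case $p=b$, which forces $b=r\in T$: whereas the top-block argument always had $p<c$ with room to the right, here a rearrangement of $\x^b$ might a priori push a $u$ out of $[1,b]$. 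The resolution is that when $p=b$ the block $[1,b]$ consists of exactly $b$ indices and $\x^b$ already fills every one of them with $u$, so the only rearrangement of $\x^b$ within its block is $\x^b$ itself; for $p>b$ the block containing $p$ is disjoint from $[1,b]$ and the issue does not arise. This very asymmetry—$p<c$ forced for the top block versus $p=b$ admissible for the bottom block—is what accounts for the $\beta_{r+1}$ appearing in the first claim against the $\beta_s$ in the second. With the boundary case settled, the dimension count closes both parts.
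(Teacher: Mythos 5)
Your proof is correct, and it takes a genuinely different route from the paper's. The paper argues by perturbation: assuming, say, $\beta_p<\beta_{p+1}=\beta_{r+1}$ for some $p\le r$, it constructs an explicit tilted inequality $(\beta_0-pu\epsilon)+\sum_{j\le p}(\beta_j+\epsilon)x_j+\sum_{j>p}\beta_jx_j+\beta^{\prime}y\ge 0$, verifies its validity for small $\epsilon>0$ via \mythref{foundation} (checking only the $n+1$ simplex vertices), and observes that its left-hand side is at most that of \eqref{sum24} at every extreme point, with strict inequality at some; hence every extreme point tight for \eqref{sum24} is tight for the perturbed inequality and \eqref{sum24} cannot be facet-defining. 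You instead run a direct dimension count: \mythref{foundation45} pins every tight extreme point with $p$ entries of $u$ to agree with $\x^p$ outside a single constant block of $\x[\beta]$ straddling positions $p$ and $p+1$, so under the negation of either claim all tight extreme points satisfy a common set of coordinate equations ($x_j=\ell$ on $[c,n]$, resp.\ $x_j=u$ on $[1,b]$), which together with the facet equation (independent since $\beta^{\prime}\neq 0$) cap the face's dimension strictly below $n$. Both arguments are sound; the paper's yields an explicit dominating valid inequality and needs only the elementary validity test, while yours is more conceptual, reuses the tightness machinery the paper develops anyway, and makes visible exactly how the face degenerates. One stylistic remark: the ``asymmetry'' you highlight between the two halves is illusory — in both cases the decisive fact is that the interval from the first $\ell$ to the last $u$ of a tight point must straddle the boundary between positions $p$ and $p+1$ inside one constant block, and the boundary $\{b,b+1\}$ (resp.\ $\{c-1,c\}$) obstructs this identically; the $+1$ offset between the two conclusions is just an artifact of $\x^k$ having its $u$-to-$\ell$ transition between coordinates $k$ and $k+1$.
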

\begin{proof}
We consider separately the conclusions that $\beta_1 = \ldots = \beta_{r+1}$ and $\beta_{s} = \ldots = \beta_n.$
\begin{itemize}
\item If $r=0,$ the first conclusion follows trivially. Otherwise, $r \geq 1$ and, by contradiction, define $p \leq r$ so that $\beta_p < \beta_{p+1} = \beta_{r+1}.$ It is sufficient to show that there exists an $\epsilon>0$  so that the inequality
\begin{equation}
(\beta_0-pu\epsilon)+\sum_{j=1}^{p}(\beta_j+\epsilon)x_j+\sum_{j=p+1}^n\beta_jx_j+\beta^{\prime}y \geq 0, \label{lastly}
\end{equation}
with $(\beta_p+\epsilon) \leq \beta_{p+1},$ is valid for $\conv{G}.$ Then every extreme point $(\x,m(\x))$ to $\conv{G}$ with $x_j=u$ for $j \in \{1, \ldots, p\}$ will have the left side of \eqref{lastly} equal to the left side of \eqref{sum24}, while every extreme point $(\x,m(\x))$ to $\conv{G}$ with $x_j=\ell$ for at least one $j \in \{1, \ldots, p\}$ will have the left side of \eqref{lastly} strictly less than the left side of \eqref{sum24}. Relative to the extreme points of the simplex $\simplex$, the left sides of \eqref{lastly} and \eqref{sum24} will both take value ${\cal L}_k(\beta_0,\x[\beta]) +\beta^{\prime} m(\x^k)$ at $(\x^k,m(\x^k))$ for each $k \in \{p, \ldots, n\},$ but the left side of \eqref{lastly} will be ${\cal L}_k(\beta_0,\x[\beta]) +\beta^{\prime} m(\x^k)-(p-k)(u-\ell)\epsilon$ at $(\x^k,m(\x^k))$ for each $\x^k, k \in \{0, \ldots, p-1\},$ which is $(p-k)(u-\ell)\epsilon>0$ less than the left side of \eqref{sum24}. Define $\epsilon=\min\{\epsilon^{\prime},(\beta_{p+1}-\beta_{p})\},$ where $\epsilon^{\prime}=\mbox{min}_{k \in \{0, \ldots, p-1\}}\left\{\frac{{\cal L}_k(\beta_0,\x[\beta]) +\beta^{\prime} m(\x^k)}{(p-k)(u-\ell)}\right\}.$  Then $\epsilon >0$ and inequality \eqref{lastly} is valid for $\conv{G}$ by \mythref{foundation}, as it is valid at all $(\x^k,m(\x^k)),k \in \{0, \ldots, n\}.$ 
\item If $s=n,$ the second conclusion follows trivially. Otherwise $s \leq (n-1)$ and, by contradiction, define $p \geq s$ so that $\beta_{s} = \beta_p < \beta_{p+1}.$  It is sufficient to show that there exists an $\epsilon>0$ so that the inequality
\begin{equation}
\beta_0+(n-p)\ell \epsilon+\sum_{j=1}^{p}\beta_jx_j+\sum_{j=p+1}^n(\beta_j-\epsilon)x_j+\beta^{\prime}y \geq 0, \label{lastly2}
\end{equation}
with $\beta_p \leq (\beta_{p+1}-\epsilon),$ is valid for $\conv{G}.$ Then every extreme point $(\x,m(\x))$ to $\conv{G}$ with $x_j=\ell$ for $j \in \{p+1, \ldots, n\}$ will have the left side of \eqref{lastly2} equal to the left side of \eqref{sum24}, while every extreme point $(\x,m(\x))$ to $\conv{G}$ with $x_j=u$ for at least one $j \in \{p+1, \ldots, n\}$ will have the left side of \eqref{lastly2} strictly less than the left side of \eqref{sum24}. Relative to the extreme points of the simplex $\simplex$, the left sides of \eqref{lastly2} and \eqref{sum24} will both take value ${\cal L}_k(\beta_0,\x[\beta]) +\beta^{\prime} m(\x^k)$ at $(\x^k,m(\x^k))$ for each $k \in \{1, \ldots, p\},$ but the left side of \eqref{lastly2} will be ${\cal L}_k(\beta_0,\x[\beta]) +\beta^{\prime} m(\x^k)-(k-p)(u-\ell)\epsilon$ at $(\x^k,m(\x^k))$ for each $\x^k, k \in \{p+1, \ldots, n\},$ which is $(k-p)(u-\ell)\epsilon>0$ less than the left side of \eqref{sum24}. Define $\epsilon=\min\{\epsilon^{\prime},(\beta_{p+1}-\beta_{p})\},$ where $\epsilon^{\prime}=\mbox{min}_{k \in \{p+1, \ldots, n\}}\left\{\frac{{\cal L}_k(\beta_0,\x[\beta]) +\beta^{\prime} m(\x^k)}{(k-p)(u-\ell)}\right\}.$  Then $\epsilon >0$ and inequality \eqref{lastly2} is valid for $\conv{G}$ by \mythref{foundation}, as it is valid at all $(\x^k,m(\x^k)),k \in \{0, \ldots, n\}.$ \qedhere
\end{itemize}
\end{proof}

\section{RLT for Multilinear Functions}
In this section, we present the Reformulation-Linearization Technique (RLT) as it pertains to the set $G$. We provide a brief description of select aspects of the general RLT process that are relevant to this study, emphasizing some key properties. Then we   review the mathematical details of the RLT in terms of Kronecker products of matrices. This RLT machinery enables us to characterize exactness of the core facets for $\conv{G}$.

\subsection{Main Ideas}
The RLT is a general methodology for reformulating mixed-integer linear and polynomial programs for the purpose of obtaining tight linear programming relaxations. While there is a rich body of literature on the topic \citep{sherali1994hierarchy,sherali1990hierarchy,sherali99rlt} we focus attention here on a box-constrained region of $n$ variables $x_j,$ where each $x_j$ is restricted to lie between variable bounds $L_j$ and $U_j.$  The RLT gives a hierarchy of successively tighter polyhedral relaxations, but we consider only the highest level $n$ which affords the convex hull representations.

Specifically, consider the set
\begin{equation}
X^{\prime} \equiv \left\{\x\in \real^n: L_j \leq x_j \leq U_j \; \forall \; j \in N\right\} \label{Xprimedef}
\end{equation}
having each $L_j <U_j,$ which is a generalization of the set $X$ of \eqref{Xdef} that reduces to \eqref{Xdef} when $L_j=\ell$ and $U_j=u$ for all $j \in N.$ The RLT process that we apply to \eqref{Xprimedef} consists of the two steps of \emph{reformulation} and \emph{linearization}. The \emph{reformulation} step computes products of the expressions $(x_j-L_j)$ and $(U_j-x_j),$ taken $n$ at a time, such that one such expression is chosen for each $j.$ In this manner, $2^n$ multilinear polynomial functions of degree $n$ emerge. To elaborate, define the $2^n$ functions $F_{K}(x)$ so that 
\begin{equation}
F_{K}(x)= \prod_{j\in K}(x_j-L_j)\prod_{j\notin K}(U_j-x_j), \quad K \subseteq N. \label{Fun}
\end{equation}
Then we have $2^n$ multilinear polynomial functions of the form
\begin{equation}
F_{K}(x), \quad K \subseteq N. \label{functions}
\end{equation}
Each of these functions is nonnegative for all $\x \in X^{\prime},$ and the RLT enforces this nonnegativity to obtain the $2^n$ multilinear polynomial inequalities
\begin{equation}
F_{K}(x) \geq 0, \quad K \subseteq N, \label{RLTstep}
\end{equation}
that are satisfied for all $\x \in X^{\prime}.$ 

The \emph{linearization} step then substitutes a continuous variable $w_J$ for each of the $2^n-(n+1)$ distinct product terms $\prod_{j \in J}x_j$ with $J\subseteq N$ and $|J| \geq 2$ that are found in \eqref{RLTstep}. Denote the linearized form of each function $F_{K}(x)$ that is obtained via such substitutions as $F_{K}(x).$ The RLT gives the following \emph{polyhedral} set:
\begin{equation}
P\equiv\left\{(\x,\x[w]) \in \real^n\times\real^{2^n-(n+1)}:F_{K}(x) \geq 0, \ K \subseteq N\right\}.  \label{RLTstep2}
\end{equation}
We adopt the notation that $\{\bullet\}_L$ is the linearized form of the vector $\bullet$ that is obtained by substituting $w_J=\prod_{j \in J}x_j$ for all $J \subseteq N$ with $|J| \geq 2$ throughout $\bullet.$ In this manner, $F_{K}(x)=\{F_{K}(x)\}_L$ for all $K \subseteq N.$ Two properties of the set $P$ are as follows.

\begin{proposition}	\thlabel{propP}
The set $P$ is a polytope having exactly $2^n$ extreme points and at each such extreme point $(\x,\x[w])$ we have
\begin{enumerate}
\item $x_j\in\{L_j,U_j\}$ for all $j \in N,$
\item $w_J=\prod_{j \in J}x_j$ for all $J \subseteq N$ with $|J| \geq 2.$
\end{enumerate}
Consequently, we have $P = \conv{T}$ where 
\begin{equation}
T\equiv\left\{(\x,\x[w]) \in \prod_{j\in N}[L_{j},U_{j}] \, \times\real^{2^n-(n+1)}: w_{J} = \prod_{j \in J}x_j, \ J \subseteq N \mbox{ with }|J| \geq 2\right\}. \label{miss1}
\end{equation}
\end{proposition}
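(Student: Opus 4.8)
The plan is to collapse $P$ onto a standard simplex by a single invertible linear change of coordinates assembled from Kronecker products, read off the vertices of that simplex, and match them back to $T$. The easy half is the inclusion $T\subseteq P$: for any $(x,w)\in T$, each linearized form $F_{K}$ evaluates to the genuine product $\prod_{j\in K}(x_{j}-L_{j})\prod_{j\notin K}(U_{j}-x_{j})\geq 0$ on the box, so every defining inequality of $P$ is satisfied. The substance is to prove $P\subseteq\conv{T}$ together with the exact count and coordinate structure of the extreme points.

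First I would record the reformulation step as a matrix identity. For each $j\in N$, writing the two degree-one factors $(U_{j}-x_{j})$ and $(x_{j}-L_{j})$ in the monomial basis $(1,x_{j})$ produces a $2\times 2$ block $M_{j}=\bigl(\begin{smallmatrix} U_{j} & -1 \\ -L_{j} & 1 \end{smallmatrix}\bigr)$ with $\det M_{j}=U_{j}-L_{j}>0$. Indexing the $2^{n}$ sets $K\subseteq N$ and the $2^{n}$ monomials $\prod_{j\in J}x_{j}$ compatibly, the whole vector of functions $F_{K}(x)$ equals $M\mu$, where $M=M_{1}\otimes\cdots\otimes M_{n}$ and $\mu$ is the monomial vector. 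Since $\det M=\prod_{j}(U_{j}-L_{j})^{2^{n-1}}\neq 0$, the matrix $M$ is invertible with $M^{-1}=M_{1}^{-1}\otimes\cdots\otimes M_{n}^{-1}$ and $M_{j}^{-1}=\tfrac{1}{U_{j}-L_{j}}\bigl(\begin{smallmatrix}1 & 1\\ L_{j} & U_{j}\end{smallmatrix}\bigr)$.

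Next I would change variables in the linearized space. Let $\mu_{L}=(1,x,w)$ denote the linearized monomial vector, whose leading ($J=\emptyset$) entry is pinned to $1$ while $(x,w)$ ranges freely over $\real^{2^{n}-1}$, making $(x,w)\mapsto\mu_{L}$ an affine bijection. Setting $\nu=M\mu_{L}$, the defining system $F_{K}\geq 0$ becomes simply $\nu\geq 0$, and the normalization $(\mu_{L})_{\emptyset}=1$ becomes $a^{\top}\nu=1$, where $a^{\top}$ is the $\emptyset$-row of $M^{-1}$. Because that row is the Kronecker product of the strictly positive first rows $\tfrac{1}{U_{j}-L_{j}}(1,1)$, every entry of $a$ equals $\bigl(\prod_{j}(U_{j}-L_{j})\bigr)^{-1}>0$. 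Hence $P$ is affinely isomorphic to the scaled standard simplex $\{\nu\geq 0:a^{\top}\nu=1\}$, which is a polytope of dimension $2^{n}-1$ with exactly $2^{n}$ extreme points $\nu=(1/a_{K})e_{K}$; this yields both the polytope claim and the vertex count.

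Finally I would pull these vertices back through $M^{-1}$ and identify them with $T$. The preimage of $(1/a_{K})e_{K}$ is $\bigl(\prod_{j}(U_{j}-L_{j})\bigr)$ times column $K$ of $M^{-1}$, and the Kronecker structure shows its $J$-entry equals $\prod_{j\in J}c_{j}$, with $c_{j}=U_{j}$ for $j\in K$ and $c_{j}=L_{j}$ otherwise. Reading off coordinates gives $x_{j}=c_{j}\in\{L_{j},U_{j}\}$ and $w_{J}=\prod_{j\in J}x_{j}$, so each extreme point lies in $T$ and obeys properties 1 and 2. Since the $2^{n}$ vertices of $P$ all belong to $T$ while $T\subseteq P$, we obtain $P=\conv{(\vertex{P})}\subseteq\conv{T}\subseteq P$, forcing $P=\conv{T}$. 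I expect the main obstacle to be precisely this last identification: confirming that the abstract simplex vertices pull back to exactly the box-vertex points of $T$, which amounts to computing the columns of the Kronecker inverse $M^{-1}$ and verifying that they reproduce the monomial vector evaluated at a vertex of the box.
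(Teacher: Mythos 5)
Your proposal is correct and follows essentially the same route as the paper: both encode the reformulation step as the Kronecker product $M=M_{1}\otimes\cdots\otimes M_{n}$ acting on the linearized monomial vector, invert it blockwise, observe that the constraint $(\mu_{L})_{\emptyset}=1$ reduces to a simplex normalization on the nonnegative image variables (your $a^{\top}\nu=1$ is the paper's $D_{n}=\sum_{j}\lambda_{j}$), and read off the $2^{n}$ vertices as columns of $M^{-1}$, which are exactly the monomial vectors evaluated at the box vertices. The final identification you flag as the main obstacle is handled identically in the paper and goes through without difficulty.
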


These properties of $P$ were originally established by \citet{sherali1990hierarchy} when $P$ has $L_j=0$ and $U_j=1$ for all $j \in N$; see also \cite[Theorem 1]{fomeni2015cutting}. They were more generally proven in \cite{adams2005hierarchy,henry} using Kronecker products of matrices for richer families of polytopes that subsume $P.$ These same type Kronecker products are also found in the generalization of the RLT constructs to handle general integer variables \cite{adams2011,adams2005hierarchy}. For completeness, we present a tailored version of the arguments in the next subsection. In addition to establishing the two stated properties of $P,$ this Kronecker product representation of \eqref{functions} through \eqref{RLTstep2} motivates two consequences that will be used later to characterize exactness of facet-defining inequalities. These apply to any multilinear polynomial and therefore encompass the functions $m(\x)$.

\begin{corollary} \thlabel{obs1}
For every $p(x) = \sum_{J \subseteq N}\alpha_j\prod_{j \in J}x_j,$ there exists a unique $\x[\pi] \in \real^{2^n}$ so that $p(x) = \sum_{K \subseteq N}\pi_{K}F_{K}(x)$. In particular, $\pi_K=\frac{p(\hat{\x})}{F_{K}(\hat{\x})},$ where $\hat{x}_j=U_j$ for all $j \in K$ and $\hat{x}_j=L_j$ for all $j \notin K.$
\end{corollary}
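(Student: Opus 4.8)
The plan is to recognize the family $\{F_{K}\colon K\subseteq N\}$ as a Lagrange-type interpolation basis for the evaluation of multilinear polynomials at the vertices of the box $X^{\prime}$. The space of multilinear polynomials in $n$ variables has dimension $2^{n}$, with the monomials $\{\prod_{j\in J}x_{j}\colon J\subseteq N\}$ as one basis; there are exactly $2^{n}$ functions $F_{K}$, so it suffices to show that these also form a basis and then to read off the coordinates of $p$ by a single evaluation.

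First I would carry out the one computation that drives everything: the value of each $F_{K}$ at each vertex of $X^{\prime}$. Write $\hat{x}^{K^{\prime}}$ for the vertex with $\hat{x}^{K^{\prime}}_{j}=U_{j}$ for $j\in K^{\prime}$ and $\hat{x}^{K^{\prime}}_{j}=L_{j}$ for $j\notin K^{\prime}$. In $F_{K}(\hat{x}^{K^{\prime}})$ the factor indexed by $j\in K$ equals $\hat{x}^{K^{\prime}}_{j}-L_{j}$, which is $U_{j}-L_{j}$ if $j\in K^{\prime}$ and $0$ if $j\notin K^{\prime}$; the factor indexed by $j\notin K$ equals $U_{j}-\hat{x}^{K^{\prime}}_{j}$, which is $0$ if $j\in K^{\prime}$ and $U_{j}-L_{j}$ if $j\notin K^{\prime}$. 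Hence some factor vanishes unless $K^{\prime}=K$, giving the delta property
\[
F_{K}(\hat{x}^{K^{\prime}}) = \begin{cases} \prod_{j\in N}(U_{j}-L_{j}), & K^{\prime}=K, \\ 0, & K^{\prime}\neq K, \end{cases}
\]
and the diagonal value is nonzero because each $L_{j}<U_{j}$.

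With this in hand, both conclusions are immediate. For existence, set $\pi_{K}:=p(\hat{x}^{K})/F_{K}(\hat{x}^{K})$ and define $q(x):=\sum_{K\subseteq N}\pi_{K}F_{K}(x)$, a multilinear polynomial. By the delta property, $q(\hat{x}^{K^{\prime}})=\pi_{K^{\prime}}F_{K^{\prime}}(\hat{x}^{K^{\prime}})=p(\hat{x}^{K^{\prime}})$ for every $K^{\prime}$, so $q$ and $p$ are multilinear polynomials agreeing at all $2^{n}$ vertices of $X^{\prime}$. Since a multilinear polynomial is uniquely determined by its values at the vertices of a box --- the evaluation map is a bijection between the two $2^{n}$-dimensional spaces, which follows from a short induction on $n$ by writing $p(x)=a+x_{n}c$ with $a,c$ multilinear in the remaining variables and using $L_{n}\neq U_{n}$ --- we conclude $q\equiv p$, which is exactly the claimed representation with the stated $\pi_{K}$. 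For uniqueness, evaluating any representation $p=\sum_{K}\pi_{K}F_{K}$ at the vertex $\hat{x}^{K}$ collapses, by the delta property, to $p(\hat{x}^{K})=\pi_{K}F_{K}(\hat{x}^{K})$, forcing $\pi_{K}=p(\hat{x}^{K})/F_{K}(\hat{x}^{K})$; thus $\pi$ is determined.

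There is no genuinely hard step here: the whole argument rests on the vertex-evaluation computation above, and the only external input is the standard fact that multilinear polynomials are determined by their vertex values (equivalently, that the $F_{K}$ are linearly independent, which also follows directly from the delta property since they number $2^{n}=\dim$). I would expect the minor bookkeeping --- keeping the index set $K$ that defines $F_{K}$ distinct from the index set $K^{\prime}$ that labels the vertex --- to be the only place requiring care, and I would treat the Kronecker-product formalism telegraphed in the next subsection as an alternative route to the same delta property rather than a necessity.
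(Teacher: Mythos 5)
Your proof is correct, and it takes a genuinely different route from the paper's. The paper proves \mythref{obs1} by invoking the invertibility of the Kronecker product matrix $\left[\begin{smallmatrix} U_1 & -1 \\ -L_1 & 1\end{smallmatrix}\right] \otimes \cdots \otimes \left[\begin{smallmatrix} U_n & -1 \\ -L_n & 1\end{smallmatrix}\right]$ from \eqref{prodkron2}: this invertibility shows the $2^n$ functions $F_K$ are linearly independent, hence a basis of the $2^n$-dimensional space of multilinear polynomials, and the formula for $\pi_K$ is then stated as something that "can be readily solved" from the structure of the $F_K$. You instead bypass the Kronecker formalism entirely and establish the basis property via the Lagrange-type delta property $F_K(\hat{x}^{K'}) = \delta_{K,K'}\prod_j(U_j - L_j)$, combined with the standard fact (which you prove by induction on $n$) that a multilinear polynomial is determined by its values at the vertices of a box with $L_j < U_j$. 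What your approach buys is self-containment and an explicit justification of the step the paper leaves terse --- the delta computation is precisely what makes the system "readily solvable" for $\pi_K$, and your uniqueness argument (evaluate any representation at $\hat{x}^K$ and watch all but one term vanish) makes that explicit. What the paper's approach buys is economy within its own development: the Kronecker machinery is built anyway for \mythref{propP}, so the basis property comes for free there, and the same matrix identity \eqref{kroneck2} is reused later (e.g.\ in \mythref{equalities24}). The two arguments are of course two views of the same change-of-basis fact, but yours is the more elementary and would stand alone without \textsection 4.2.
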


\begin{corollary} \thlabel{obs2}
For every $p(x) = \sum_{J \subseteq N}\alpha_j\prod_{j \in J}x_j,$ we have $p(x) \ge 0$ for all $x\in X^{\prime}$ if and only if $p(x) \ge 0$ for every extreme point $x\in X^{\prime}$. 
\end{corollary}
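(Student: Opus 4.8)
The plan is to reduce the statement to the RLT decomposition already established in \mythref{obs1}. The forward implication is immediate, since extreme points of $X^{\prime}$ belong to $X^{\prime}$; I would dispatch it in one line and concentrate on the converse. So suppose $p(x)\ge 0$ at every extreme point of $X^{\prime}$, that is, at every $\hat{x}$ with $\hat{x}_j\in\{L_j,U_j\}$ for all $j\in N$.

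By \mythref{obs1} there is a unique $\x[\pi]\in\real^{2^n}$ with $p(x)=\sum_{K\subseteq N}\pi_{K}F_{K}(x)$ and $\pi_K=p(\hat{\x})/F_{K}(\hat{\x})$, where $\hat{x}_j=U_j$ for $j\in K$ and $\hat{x}_j=L_j$ for $j\notin K$. The first key step is to note that each $F_{K}$ defined in \eqref{Fun} is a product of factors $(x_j-L_j)$ and $(U_j-x_j)$, every one of which is nonnegative on the box $X^{\prime}$; hence $F_{K}(x)\ge 0$ for all $x\in X^{\prime}$ and all $K\subseteq N$.

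The second key step is a short sign computation on the coefficients $\pi_K$. Evaluating $F_{K}$ at its associated vertex $\hat{\x}$ makes each factor equal to $U_j-L_j$, so $F_{K}(\hat{\x})=\prod_{j\in N}(U_j-L_j)>0$ because every $L_j<U_j$. Thus $\pi_K$ is a fixed positive multiple of the vertex value $p(\hat{\x})$, and the hypothesis that $p$ is nonnegative at every vertex yields $\pi_K\ge 0$ for all $K$. Combining the two steps, $p(x)=\sum_{K\subseteq N}\pi_K F_{K}(x)$ is a nonnegative combination of functions that are nonnegative throughout $X^{\prime}$, whence $p(x)\ge 0$ for all $x\in X^{\prime}$, as required.

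There is no serious obstacle once \mythref{obs1} is in hand; the only point deserving care is the recognition that the RLT multipliers $\pi_K$ coincide, up to the common positive factor $\prod_{j\in N}(U_j-L_j)$, with the values of $p$ at the vertices of $X^{\prime}$, so that controlling the sign of $p$ on $\vertex{X^{\prime}}$ simultaneously controls the signs of all multipliers. This is precisely the mechanism that upgrades vertex nonnegativity to global nonnegativity, and it is the same phenomenon underlying the vertex-extendability of multilinear envelopes invoked earlier.
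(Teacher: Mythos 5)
Your proposal is correct and follows essentially the same route as the paper: both rely on the decomposition $p(x)=\sum_{K\subseteq N}\pi_{K}F_{K}(x)$ from \mythref{obs1}, the positivity of $F_{K}(\hat{\x})=\prod_{j\in N}(U_j-L_j)$ so that $\pi_K$ has the sign of the vertex value $p(\hat{\x})$, and the nonnegativity of each $F_{K}$ on the box. Your write-up merely spells out these sign computations more explicitly than the paper's terse version.
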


\mythref{propP} can be extended to include the variable $y$ and restriction $y= m(\x)$ of \eqref{Sdef}, so that the projection operation from the resulting higher-variable space onto the original $(\x,y)$ space can, in theory, be used to compute $\conv{G}.$ \akshay{These properties were already noted by \citet{sherali1997convex} using a different approach,  
we give our arguments in Appendix~\ref{sec:propG} for sake of completeness.}

\subsection{Use of Kronecker Products} 

\begin{proof}[\textbf{Proof of \mythref{propP}}]
We prove \mythref{propP} by expressing \eqref{functions} and \eqref{RLTstep} as Kronecker products of matrices, and then using a known result of Kronecker products. The consequence $P = \conv{T}$ follows since every extreme point of the polytope $P$ is in $T,$ and since $T \subseteq P$ by construction.

Recall that the Kronecker product of an $m_1 \times n_1$ matrix $A$ with an $m_2 \times n_2$ matrix $B$, denoted by $A \otimes B,$ is that $m_1m_2 \times n_1n_2$ matrix  $A \otimes B = \scriptsize \left[\begin{array}{ccc} a_{11}B & \hdots & a_{1n_1}B \\ \vdots & \ddots & \vdots \\ a_{m_11}B & \hdots & a_{m_1n_1}B \end{array}\right], \normalsize$
\noindent where $a_{ij}$ is the $(i,j)^{th}$ entry of $A.$ A result of Kronecker products is that, given any collection of $r$ pairs of matrices $A_i$ and $B_i$ for $i=1,...,r$ such that the products $A_iB_i$ are defined, we have that 
\begin{equation}
A_1B_1 \otimes A_2B_2 \otimes \ldots \otimes A_rB_r= (A_1 \otimes A_2 \otimes \ldots \otimes A_r)(B_1 \otimes B_2 \otimes \ldots \otimes B_r). \label{kroneck}
\end{equation}
As a consequence, if each of the matrices $A_i$ is invertible with inverse $A_i^{-1},$ then by setting $B_i=A_i^{-1}$ for all $i$ within \eqref{kroneck}, we obtain that
\begin{equation}
(A_1 \otimes A_2 \otimes \ldots \otimes A_{r})^{-1} = A_1^{-1} \otimes A_2^{-1} \otimes \ldots A_r^{-1}, \label{kroneck2}
\end{equation}
since the left side of \eqref{kroneck} becomes the suitably-sized identity matrix. 

Relative to the functions $F_{K}(x)$ of \eqref{functions} and \eqref{RLTstep}, and the functions $F_{K}(x)$ and set $P$ of \eqref{RLTstep2}, define for each variable $x_j,$ the two matrices $\scriptsize \left[\begin{array}{cc} U_j &  -1 \\ -L_j & 1 \end{array}\right] \normalsize$ and $\scriptsize \left[\begin{array}{c} 1 \\ x_j \end{array}\right] \normalsize$ so that the functions $(U_j-x_j)$ and $(x_j-L_j)$ can be represented in matrix form as the two entries of the vector $\scriptsize \left[\begin{array}{cc} U_j &  -1 \\ -L_j & 1 \end{array}\right] \left[\begin{array}{c} 1 \\ x_j \end{array}\right] \normalsize.$ Then the $2^n$ functions of \eqref{functions} can be collectively expressed using Kronecker products as 
\begin{equation}
\scriptsize \left(\left[\begin{array}{cc} U_1 &  -1 \\ -L_1 & 1 \end{array}\right] \left[\begin{array}{c} 1 \\ x_1 \end{array}\right]\right)\normalsize \otimes \ldots \otimes \scriptsize \left(\left[\begin{array}{cc} U_n &  -1 \\ -L_n & 1 \end{array}\right] \left[\begin{array}{c} 1 \\ x_n \end{array}\right]\right)\normalsize, \nonumber
\end{equation}
and the $2^n$ inequalities of \eqref{RLTstep} can be similarly expressed as 
\begin{equation}
\scriptsize \left(\left[\begin{array}{cc} U_1 &  -1 \\ -L_1 & 1 \end{array}\right] \left[\begin{array}{c} 1 \\ x_1 \end{array}\right]\right)\normalsize \otimes \ldots \otimes \scriptsize \left(\left[\begin{array}{cc} U_n &  -1 \\ -L_n & 1 \end{array}\right] \left[\begin{array}{c} 1 \\ x_n \end{array}\right]\right)\normalsize  \geq	 \boldsymbol{0}, \label {prodkron}
\end{equation}
where $\boldsymbol{0}$ is the $2^n$-dimensional column vector of all zeroes. Identity \eqref{kroneck} allows us to rewrite \eqref{prodkron} as
\begin{equation}
\scriptsize \left(\left[\begin{array}{cc} U_1 &  -1 \\ -L_1 & 1 \end{array}\right] \normalsize \otimes \ldots \otimes \scriptsize \left[\begin{array}{cc} U_n &  -1 \\ -L_n & 1 \end{array}\right]\right)\left(\left[\begin{array}{c} 1 \\ x_1 \end{array}\right]\normalsize \otimes \ldots \otimes \scriptsize \left[\begin{array}{c} 1 \\ x_n \end{array}\right]\right) \normalsize \geq \boldsymbol{0}. \label {prodkron2}
\end{equation}

The set $P$ of \eqref{RLTstep2} can then be expressed in terms of \eqref{prodkron2} as 
\begin{alignat}{2} 
P=\mbox{\Huge \{} \normalsize &(\x,\x[w]) \in \real^n\times\real^{2^n-(n+1)}:   \nonumber \\ 
&\scriptsize \left(\left[\begin{array}{cc} U_1 &  -1 \\ -L_1 & 1 \end{array}\right] \normalsize \otimes \ldots \otimes \scriptsize \left[\begin{array}{cc} U_n &  -1 \\ -L_n & 1 \end{array}\right]\right)\left\{\left[\begin{array}{c} 1 \\ x_1 \end{array}\right]\normalsize \otimes \ldots \otimes \scriptsize \left[\begin{array}{c} 1 \\ x_n \end{array}\right]\right\}_L \normalsize \geq \boldsymbol{0}\mbox{\Huge \}} \normalsize, && \label{ca4} 
\end{alignat}
which is equivalent to 
\begin{alignat}{2} 
P=\mbox{\Huge \{} \normalsize &(\x,\x[w]) \in \real^n\times\real^{2^n-(n+1)}:   \nonumber \\ 
&\scriptsize \left(\left[\begin{array}{cc} U_1 &  -1 \\ -L_1 & 1 \end{array}\right] \normalsize \otimes \ldots \otimes \scriptsize \left[\begin{array}{cc} U_n &  -1 \\ -L_n & 1 \end{array}\right]\right)\left\{\left[\begin{array}{c} 1 \\ x_1 \end{array}\right]\normalsize \otimes \ldots \otimes \scriptsize \left[\begin{array}{c} 1 \\ x_n \end{array}\right]\right\}_L \normalsize = \x[\lambda] \nonumber \\ & \mbox{for some nonnegative } \x[\lambda] \in \real^{2^n}\mbox{\Huge \}} \normalsize. && \label{ca5} 
\end{alignat}

For each $j,$ the inverse of the matrix $\scriptsize \left[\begin{array}{cc} U_j &  -1 \\ -L_j & 1 \end{array}\right]$ \normalsize is $\scriptsize \frac{1}{d_j}\left[\begin{array}{cc} 1 &1 \\ L_j & U_j \end{array}\right],$ \normalsize where $d_j=(U_j-L_j)$ denotes the difference between $U_j$ and $L_j.$ Then we apply \eqref{kroneck2} to \eqref{ca5} to rewrite the set $P$ as
\begin{alignat}{2} 
P=\mbox{\Huge \{} \normalsize &(\x,\x[w]) \in \real^n\times\real^{2^n-(n+1)}:  \nonumber \\ 
&\left\{\scriptsize\left[\begin{array}{c} 1 \\ x_1 \end{array}\right]\normalsize \otimes \ldots \otimes \scriptsize \left[\begin{array}{c} 1 \\ x_n \end{array}\right]\right\}_L \normalsize =   \left(\scriptsize \frac{1}{d_1}\left[\begin{array}{cc} 1 &  1 \\ L_1 & U_1 \end{array}\right] \normalsize \otimes \ldots \otimes \scriptsize \frac{1}{d_n}\left[\begin{array}{cc} 1 &  1 \\ L_n & U_n \end{array}\right]\right)\normalsize\x[\lambda] \nonumber \\ & \mbox{for some nonnegative } \x[\lambda] \in \real^{2^n}\mbox{\Huge \}} . && \label{ca6} 
\end{alignat}

The two claimed properties of $P$ become apparent in light of \eqref{ca6}. The first equation of \eqref{ca6} is equivalent via scaling to $D_n=\sum_{j=1}^{2^n}\lambda_j,$ with $D_n \equiv \prod_{j=1}^nd_j,$ thereby establishing $P$ as a polytope having exactly $2^n$ extreme points. These points can be numbered so that extreme point $j$ has $\x[\lambda]$ given by $\lambda_j = D_n$ and $\lambda_i=0$ for $i \neq j,$ and $(\x,\x[w])$ given by column $j$ of the matrix $\scriptsize \left[\begin{array}{cc} 1 &  1 \\ L_1 & U_1 \end{array}\right] \normalsize \otimes \ldots \otimes  \scriptsize \left[\begin{array}{cc} 1 &  1 \\ L_n & U_n \end{array}\right], \normalsize$ less the first row. Each of the $2^n$ columns of this last matrix consists of a distinct realization of the vector $\scriptsize\left[\begin{array}{c} 1 \\ x_1 \end{array}\right]\normalsize \otimes \ldots \otimes \scriptsize \left[\begin{array}{c} 1 \\ x_n \end{array}\right] \normalsize,$ less the first entry, evaluated at some $\x$ having each $x_j$ fixed at either its lower bound $L_j$ or its upper bound $U_j.$ 
\end{proof}

\begin{proof}[\textbf{Proof of \mythref{obs1}}]
Invertibility of the matrix $\left(\scriptsize\left[\begin{array}{cc} U_1 &  -1 \\ -L_1 & 1 \end{array}\right] \normalsize \otimes \ldots \otimes \scriptsize \left[\begin{array}{cc} U_n &  -1 \\ -L_n & 1 \end{array}\right]\right)$ from \eqref{prodkron2} establishes the $2^n$ functions $F_{K}(x)$ of \eqref{functions} as a basis for the vector space consisting of all multilinear polynomials having degree at most $n.$ This space clearly has dimension $2^n,$ and the invertibility demonstrates the functions to be linearly independent. As a result, associated with every multilinear polynomial $\sum_{J \subseteq N}\alpha_j\prod_{j \in J}x_j,$ there exists a unique $\x[\pi] \in \real^{2^n}$ so that $p(x) = \sum_{K \subseteq N}\pi_{K}F_{K}(x)$ holds. This identity can be readily solved in terms of $\pi_K$ due to the structure of the functions $F_{K}(x)$ to obtain the claimed expression for $\pi$.
\end{proof}

\begin{proof}[\textbf{Proof of \mythref{obs2}}]
The multipliers $\pi_K$ of \mythref{obs1} can be used to establish, in terms of the extreme points of the set $X^{\prime}$, the nonnegativity of $\sum_{J \subseteq N}\alpha_J\prod_{j \in J}x_j$ over $X^{\prime},$ and the collection of points at which the polynomial vanishes. Clearly, such a polynomial is nonnegative over $X^{\prime}$ if and only if $\pi_K \ge 0$ for all $K$. \mythref{obs1} uniquely defines each such multiplier to be the positive scalar $\frac{p(\hat{\x})}{F_{K}(\hat{\x})}$ evaluated at a distinct extreme point $\hat{x}$. 
\end{proof}


\subsection{Exactness of Core Inequalities}

In this section, we exploit the RLT properties to characterise the set of points $(\x,y) \in G$ at which each valid core inequality is satisfied exactly. We consider only core inequalities since other inequalities can be handled via permutations of the variables $x_j$ to coincide with the permutations of the coefficients $\beta_j.$ We begin with three propositions and a theorem. The first proposition provides a necessary and sufficient condition for a point $\tilde{\x} \in X$ to satisfy a function $F_{K}(x)$ of \eqref{Fun} exactly, and the second gives three equivalent conditions for exactness to hold. The third proposition uses the RLT results of Observations 1 and 2 as a theoretical bridge to translate the conditions of the second proposition to that of satisfying a valid inequality for $\conv{G}$ exactly. The theorem uses this third proposition to give a necessary and sufficient condition for a point $(\tilde{\x},\tilde{y}) \in G$ to satisfy a valid core inequality exactly. We subsequently present two special cases of the theorem as corollaries. Consistent with our earlier work, we then separately address the core facets when the SMP $m(\x)$ is supermodular over the extreme points of $X$ of \eqref{Xdef}, and when $m(\x)$ is a monomial having $-\ell=u>0.$ The two corollaries serve to simplify this analysis. Throughout, the inequalities $F_{K}(x) \geq 0$ of \eqref{RLTstep} are assumed to have $L_j=\ell$ and $U_j=u$ for all $j \in N$ within \eqref{Fun} as in $X$ of \eqref{Xdef}.   

\begin{lemma}\thlabel{fite000}
Given any $K \subseteq N$ and any $\tilde{\x} \in X$ of \eqref{Xdef}, partition $N$  into $N_1,$ $N_2,$ and $N_3$ so that $N_1 \equiv\{j:\tilde{x}_j=\ell\},$ $N_2 \equiv\{j:\tilde{x}_j=u\},$ and $N_3 \equiv\{j:\ell<\tilde{x}_j<u\}.$ Then $\tilde{\x}$ satisfies $F_{K}(x)=0$ if and only if $\left([K \cap N_1]  \cup [(N\setminus K) \cap N_2]\right) \neq \emptyset.$
\end{lemma}
\begin{proof}
Trivial by the definition of the polynomial $F_{K}(x)$ in \eqref{Fun}.
\end{proof}

\begin{proposition}\thlabel{fite00}
Given any $K \subseteq N$ and any $\tilde{\x} \in X$ of \eqref{Xdef}, partition $N$  into $N_1,$ $N_2,$ and $N_3$ so that $N_1 \equiv\{j:\tilde{x}_j=\ell\},$ $N_2 \equiv\{j:\tilde{x}_j=u\},$ and $N_3 \equiv\{j:\ell<\tilde{x}_j<u\}.$ The following statements are equivalent:
\begin{enumerate}
\item $\tilde{\x}$ satisfies $F_{K}(x)=0,$ 
\item $\hat{\x}$ satisfies $F_{K}(x)=0$ at every $\hat{\x} \in X$ having $\hat{x}_j=\ell \; \forall \; j \in N_1$ and $\hat{x}_j=u\; \forall \; j \in N_2,$
\item $\hat{\x}$ satisfies $F_{K}(x)=0$ at every extreme point $\hat{\x}$ of $X$ having $\hat{x}_j=\ell \; \forall \; j \in N_1$ and $\hat{x}_j=u\; \forall \; j \in N_2.$
\end{enumerate}
\end{proposition}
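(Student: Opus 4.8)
The plan is to reduce all three statements to the single combinatorial criterion supplied by \mythref{fite000}, which says that $F_{K}(x)$ vanishes at a point exactly when the index set $[K \cap N_1] \cup [(N \setminus K) \cap N_2]$ is nonempty. The decisive feature of this criterion is that it involves only the coordinates pinned at $\ell$ or $u$; the coordinates in $N_3$ are irrelevant to whether $F_{K}$ vanishes. Because every point $\hat{\x}$ quantified over in Statements 2 and 3 is constrained precisely on $N_1$ and $N_2$, while leaving the $N_3$ coordinates free, the criterion is invariant across all of them. I would organise the argument as the cycle of implications $1 \Rightarrow 2 \Rightarrow 3 \Rightarrow 1$.

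For $1 \Rightarrow 2$, suppose $\tilde{\x}$ satisfies $F_{K}(x)=0$, so by \mythref{fite000} there is a witnessing index $j^{*} \in [K \cap N_1] \cup [(N \setminus K) \cap N_2]$. Any admissible $\hat{\x}$ has $\hat{x}_j=\ell$ for $j \in N_1$ and $\hat{x}_j=u$ for $j \in N_2$, so its own partition $\hat{N}_1,\hat{N}_2,\hat{N}_3$ satisfies $N_1 \subseteq \hat{N}_1$ and $N_2 \subseteq \hat{N}_2$; hence $j^{*}$ remains a witness for $\hat{\x}$, and \mythref{fite000} yields $F_{K}(\hat{\x})=0$. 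The implication $2 \Rightarrow 3$ is immediate, since the extreme points quantified over in Statement 3 form a subset of the points quantified over in Statement 2.

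The only step requiring an explicit construction, and hence the main (if modest) obstacle, is $3 \Rightarrow 1$, which I would establish by contraposition. If $\tilde{\x}$ fails $F_{K}(x)=0$, then \mythref{fite000} forces $K \cap N_1 = \emptyset$ and $N_2 \subseteq K$. I would then exhibit the particular extreme point $\hat{\x}$ of $X$ defined by $\hat{x}_j=\ell$ for $j \in N_1 \cup (N_3 \setminus K)$ and $\hat{x}_j=u$ for $j \in N_2 \cup (N_3 \cap K)$; this is admissible in Statement 3 because it agrees with the prescribed values on $N_1$ and $N_2$. The guiding idea is simply to push each free coordinate to the bound that prevents $F_{K}$ from vanishing, namely to $u$ on $K$ and to $\ell$ off $K$. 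A direct check using $K \cap N_1 = \emptyset$ and $N_2 \subseteq K$ then shows that $K \cap \hat{N}_1$ and $(N \setminus K) \cap \hat{N}_2$ are both empty, so $F_{K}(\hat{\x}) \neq 0$ by \mythref{fite000}, contradicting Statement 3. This closes the cycle and proves the equivalence.
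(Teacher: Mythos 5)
Your proposal is correct and follows essentially the same route as the paper: both reduce everything to the nonemptiness criterion of \mythref{fite000}, prove the cycle $1 \Rightarrow 2 \Rightarrow 3 \Rightarrow 1$, and for $3 \Rightarrow 1$ construct the identical extreme point (pushing $N_3$ coordinates to $u$ on $K$ and to $\ell$ off $K$). The only cosmetic difference is that you phrase $3 \Rightarrow 1$ as a contrapositive while the paper evaluates $F_K$ directly at that point; the substance is the same.
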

\begin{proof}
Given any $K \subseteq N$ and any $\tilde{\x} \in X$ with $N$ partitioned into $N_1,$ $N_2,$ and $N_3$ as described, the proof is to show that $1 \rightarrow 2 \rightarrow 3 \rightarrow 1.$ The case $2 \rightarrow 3$ is trivial and so we consider the remaining two cases. For convenience, we define the set $\mathcal{N}_K \subseteq N$ as $\mathcal{N}_K\equiv \left([K \cap N_1]  \cup [(N\setminus K) \cap N_2]\right).$ \\
$\left(1 \rightarrow 2\right)$ 
\mythref{fite000} gives us that $\tilde{\x}$ satisfies $F_{K}(x)=0$ only if $\mathcal{N}_K \neq \emptyset.$ But $\mathcal{N}_K$ remains nonempty for every $\hat{\x} \in X$ having $\hat{x}_j=\ell \; \forall \; j \in N_1$ and $\hat{x}_j=u \; \forall \; j \in N_2$ so that the ``if" condition of \mythref{fite000} establishes the result. \\
$\left(3 \rightarrow 1\right)$ 
Consider that extreme point $\hat{\x}$ of $X$ having $\hat{x}_j=\ell \; \forall \; j \in N_1,$ $\hat{x}_j=u \; \forall \; j \in N_2,$  $\hat{x}_j=\ell \; \forall \; j \in (N_3-K),$ and $\hat{x}_j=u \; \forall \; j \in (K \cap N_3).$ Then 
\begin{equation}
F_{K}(x)=0=(u-\ell)^{(n-|\mathcal{N}_K|)}\left(\prod_{j \in (K \cap N_1)}(\hat{x}_j-\ell)\prod_{j \in (N\setminus K) \cap N_2}(u-\hat{x}_j)\right)\nonumber
\end{equation} 
when evaluated at $\hat{\x},$ implying that $\mathcal{N}_K \neq \emptyset.$ Then the ``if" condition of \mythref{fite000} establishes the result.
\end{proof}

The result below mirrors that of \mythref{fite00}, but invokes Observations 1 and 2 to extend the results from the functions $F_{K}(x)$ to the inequalities \eqref{sum24}.
\begin{proposition}\thlabel{fitt}
Given any inequality \eqref{sum24} that is valid for $\conv{G}$ and any $\tilde{\x} \in X$ of \eqref{Xdef}, partition $N$ into $N_1,$ $N_2,$ and $N_3$ so that $N_1 \equiv\{j:\tilde{x}_j=\ell\},$ $N_2 \equiv\{j:\tilde{x}_j=u\},$ and $N_3 \equiv\{j:\ell<\tilde{x}_j<u\}.$ The following statements are equivalent:
\begin{enumerate}
\item $(\x,y)=(\tilde{\x},m(\tilde{\x}))$ satisfies \eqref{sum24} exactly,
\item $(\x,y)=(\hat{\x},m(\hat{\x}))$ satisfies \eqref{sum24} exactly at every $\hat{\x} \in X$ having $\hat{x}_j=\ell \; \forall \; j \in N_1$ and $\hat{x}_j=u\; \forall \; j \in N_2,$
\item $(\x,y)=(\hat{\x},m(\hat{\x}))$ satisfies \eqref{sum24} exactly at every extreme point $\hat{\x}$ of $X$ having $\hat{x}_j=\ell \; \forall \; j \in N_1$ and $\hat{x}_j=u\; \forall \; j \in N_2.$
\end{enumerate}
\end{proposition}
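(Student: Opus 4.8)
The plan is to prove the equivalence $1 \rightarrow 2 \rightarrow 3 \rightarrow 1$, mirroring the structure of \mythref{fite00} but using the RLT machinery to translate between the inequality \eqref{sum24} and the functions $F_K(x)$. The conceptual bridge is \mythref{obs1}: since the valid inequality, viewed as the polynomial $p(x) = \beta_0 + \sum_j \beta_j x_j + \beta' m(x)$, is a multilinear polynomial in $x$, it admits a unique representation $p(x) = \sum_{K \subseteq N} \pi_K F_K(x)$ with each $\pi_K \geq 0$ by \mythref{obs2} (validity over $X$ means $p(x) \geq 0$ on $X$, which is equivalent to nonnegativity of all $\pi_K$). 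The key observation is that, since every $F_K(x) \geq 0$ on $X$, the point $(\tilde{x}, m(\tilde{x}))$ satisfies \eqref{sum24} exactly, i.e. $p(\tilde{x}) = 0$, if and only if $\pi_K F_K(\tilde{x}) = 0$ for every $K$ — that is, for every $K$ with $\pi_K > 0$ we must have $F_K(\tilde{x}) = 0$. This reduces exactness of \eqref{sum24} at $\tilde{x}$ to the simultaneous vanishing of a specific collection of the $F_K$, which is precisely the setting of \mythref{fite00}.

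First I would establish this reduction formally: let $\mathcal{K} := \{K \subseteq N : \pi_K > 0\}$ be the support of the representation. Because each $F_K(x) \geq 0$ throughout $X$, we have $p(\tilde{x}) = \sum_{K \in \mathcal{K}} \pi_K F_K(\tilde{x})$ is a sum of nonnegative terms, so $p(\tilde{x}) = 0$ if and only if $F_K(\tilde{x}) = 0$ for all $K \in \mathcal{K}$. The same equivalence holds verbatim for any $\hat{x} \in X$. Therefore statement $1$ is equivalent to ``$F_K(\tilde{x}) = 0$ for all $K \in \mathcal{K}$'', statement $2$ is equivalent to ``$F_K(\hat{x}) = 0$ for all $K \in \mathcal{K}$ and all such $\hat{x}$'', and statement $3$ to the analogous condition over extreme points $\hat{x}$.

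Next I would invoke \mythref{fite00} term by term. For each fixed $K \in \mathcal{K}$, \mythref{fite00} already gives the equivalence of $F_K(\tilde{x}) = 0$, of $F_K(\hat{x}) = 0$ at every $\hat{x} \in X$ agreeing with $\tilde{x}$ on $N_1 \cup N_2$, and of $F_K(\hat{x}) = 0$ at every such extreme point $\hat{x}$ — and crucially these three sets $N_1, N_2, N_3$ depend only on $\tilde{x}$, not on $K$, so they are common to every term. Taking the conjunction over all $K \in \mathcal{K}$ then immediately upgrades the per-$K$ equivalences to the three statements of the present proposition. The chain $1 \rightarrow 2$ follows since $F_K(\tilde{x})=0$ forces $F_K(\hat{x})=0$ at all relevant $\hat{x}$ by $(1 \rightarrow 2)$ of \mythref{fite00}; $2 \rightarrow 3$ is trivial by restricting to extreme points; and $3 \rightarrow 1$ follows from $(3 \rightarrow 1)$ of \mythref{fite00} applied to each $K \in \mathcal{K}$.

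The main subtlety — and the one point requiring care rather than calculation — is that the decomposition $p(x) = \sum_K \pi_K F_K(x)$ uses the same partition data for all $K$, so that a single partition $(N_1,N_2,N_3)$ of $\tilde{x}$ governs every term simultaneously; this is what allows the term-by-term application of \mythref{fite00} to assemble into statements about the whole inequality. I would also note explicitly that validity of \eqref{sum24} is exactly what guarantees $\pi_K \geq 0$ via \mythref{obs2}, which is essential for the ``sum of nonnegatives is zero iff each is zero'' argument; without sign control on the $\pi_K$ the reduction would fail. No genuine obstacle arises beyond making this uniformity precise, since the heavy lifting has been isolated into \mythref{obs1}, \mythref{obs2}, and \mythref{fite00}.
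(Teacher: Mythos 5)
Your proposal is correct and follows essentially the same route as the paper's proof: both use \mythref{obs1} and \mythref{obs2} to write the valid inequality as $\sum_{K}\pi_{K}F_{K}(x)$ with $\pi_{K}\ge 0$, observe that exactness at a point is equivalent to $F_{K}=0$ for every $K$ with $\pi_{K}>0$, and then apply \mythref{fite00} to each such $K$ to run the chain $1\rightarrow 2\rightarrow 3\rightarrow 1$. Your explicit remark that the partition $(N_{1},N_{2},N_{3})$ is common to all terms $K$ is a useful clarification but does not change the argument.
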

\begin{proof}
Given any $K \subseteq N$ and any $\tilde{\x} \in X$ with $N$ partitioned into $N_1,$ $N_2,$ and $N_3$ as described, the proof is to show that $1 \rightarrow 2 \rightarrow 3 \rightarrow 1.$ The case $2 \rightarrow 3$ is trivial and so we consider the remaining two cases. To begin, note that Observations 1 and 2 combine to show, given any inequality \eqref{sum24} that is valid for $\conv{G},$ there exists a unique, nonnegative $\x[\pi] \in \real^{2^n}$ satisfying $\beta_0+\sum_{j=1}^n\beta_jx_j+\beta^{\prime}m(\x)= \prod_{K \subseteq N}\pi_KF_{K}(x)$ so that, given any $\bar{\x} \in X,$
\begin{equation}
\beta_0+\sum_{j=1}^n\beta_j\bar{x}_j+\beta^{\prime}m(\bar{\x})= 0 \mbox{ if and only if every } \pi_K>0 \mbox{ has } F_{K}(x)=0 \mbox{ at } \bar{\x}. \label{newbeed}
\end{equation}
$\left(1 \rightarrow 2\right)$ Given $(\x,y)=(\tilde{\x},m(\tilde{\x}))$ satisfies \eqref{sum24} exactly, the ``only if" direction of \eqref{newbeed} with $\bar{\x}=\tilde{\x}$ gives us that every $\pi_K>0$ has $F_{K}(x)=0$ at $\tilde{\x}.$ Implication $1 \rightarrow 2$ of \mythref{fite00} then gives us that every $\pi_K>0$ has $F_{K}(x)=0$ at every $\hat{\x} \in X$ having $\hat{x}_j=\ell \; \forall \; j \in N_1$ and $\hat{x}_j=u\; \forall \; j \in N_2.$ Then the ``if" direction of \eqref{newbeed} with each such $\hat{\x}$ substituted for $\bar{\x}$ gives the result.\\
$\left(3 \rightarrow 1\right)$ Given $(\x,y)=(\hat{\x},m(\hat{\x}))$ satisfies \eqref{sum24} exactly at every extreme point $\hat{\x}$ of $X$ having $\hat{x}_j=\ell \; \forall \; j \in N_1$ and $\hat{x}_j=u\; \forall \; j \in N_2,$ the ``only if" direction of \eqref{newbeed} with each such $\hat{\x}$ substituted for $\bar{\x}$ gives us that every $\pi_K>0$ has $F_{K}(x)=0$ at every such $\hat{\x}.$ Implication $3 \rightarrow 1$ of \mythref{fite00} then gives us that every $\pi_K>0$ has $F_{K}(x)=0$ at $\tilde{\x}.$ Then the ``if" direction of \eqref{newbeed} with $\bar{\x}=\tilde{\x}$ gives the result.
\end{proof}

We invoke \mythref{foundation45} and \mythref{fitt} to establish a theorem and two corollaries. The theorem gives, in terms of the extreme points of the simplex $\simplex$ of \eqref{simpleex}, a necessary and sufficient condition for a valid core inequality \eqref{sum24} to be satisfied exactly at a point $(\tilde{\x},\tilde{y})\in G.$ This theorem is a generalization of \mythref{foundation45} in that, by restricting $n=(p+b)$ within the theorem, the stated point $(\tilde{\x},\tilde{y})\in G$ must be the extreme point $(\tilde{\x},m(\tilde{\x}))\in G$ of $\conv{G}$ found within \mythref{foundation45}. The corollaries are special cases of the theorem when the valid core inequality \eqref{sum24}: is satisfied exactly at $(\x^j,m(\x^j))$ for all $j \in \{0, \ldots, n\},$ and has all coefficients $\beta_j$ equal to the same scalar, say $\bar{\beta},$ respectively.

\begin{theorem}\thlabel{exact1111}
Given a point $(\tilde{\x},\tilde{y})\in G$ with $\tilde{\x}$ not an extreme point of the simplex $\simplex$ of \eqref{simpleex}, let $r$ be the smallest index $j$ such that $\tilde{x}_j<u$ and $s$ be the largest index $j$ such that $\tilde{x}_j> \ell.$ Further let $p$ be the number of entries of $\tilde{\x}$ having value $u$ and $b$ be the number of entries of $\tilde{\x}$ having value $\ell.$ Then $(\tilde{\x},\tilde{y})$ satisfies a valid core inequality \eqref{sum24} exactly if and only if $(\x,y)=(\x^j,m(\x^j))$ satisfies \eqref{sum24} exactly for all $j \in\{p,\ldots,n-b\},$ and $\beta_r=\ldots=\beta_s.$ 
\end{theorem}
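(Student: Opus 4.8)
The plan is to combine \mythref{fitt}, which converts exactness of \eqref{sum24} at the arbitrary point $\tilde{\x}$ into exactness at a family of $0/1$ vertices, with \mythref{foundation45} and the rearrangement estimate underlying \mythref{foundation}, which handle those vertices. Since $(\tilde{\x},\tilde{y})\in G$ we have $\tilde{y}=m(\tilde{\x})$, so I would first partition $N$ into $N_1=\{j:\tilde{x}_j=\ell\}$, $N_2=\{j:\tilde{x}_j=u\}$, $N_3=\{j:\ell<\tilde{x}_j<u\}$, noting $|N_2|=p$ and $|N_1|=b$. By \mythref{fitt}, $(\tilde{\x},m(\tilde{\x}))$ satisfies \eqref{sum24} exactly if and only if every extreme point $\hat{\x}$ of $X$ with $\hat{x}_j=u$ on $N_2$, $\hat{x}_j=\ell$ on $N_1$, and free choices on $N_3$ satisfies \eqref{sum24} exactly. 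I would record two structural facts for later use: (i) the number $k$ of $u$-entries of such a $\hat{\x}$ ranges exactly over $\{p,\ldots,n-b\}$ as the $N_3$-entries vary; and (ii) by the definitions of $r$ and $s$, every index less than $r$ lies in $N_2$ and every index greater than $s$ lies in $N_1$, so each relevant $\hat{\x}$ has its first $\ell$-index at least $r$ and its last $u$-index at most $s$.

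For the sufficiency direction, assume $(\x^j,m(\x^j))$ is exact for all $j\in\{p,\ldots,n-b\}$ and $\beta_r=\cdots=\beta_s$. I would fix a relevant $\hat{\x}$ with $k$ $u$-entries. If $\hat{\x}$ is sorted it equals $\x^k$ and is exact by hypothesis. Otherwise $\hat{\x}$ is a vertex of $\conv{G}$ that is not a vertex of $\simplex$, so \mythref{foundation45} applies: $\hat{\x}$ is exact if and only if $(\x^k,m(\x^k))$ is exact and $\beta_{r'}=\cdots=\beta_{s'}$, where $r'$ and $s'$ are the first $\ell$- and last $u$-indices of $\hat{\x}$. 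The first condition holds since $k\in\{p,\ldots,n-b\}$, and because $r\le r'\le s'\le s$ by fact (ii), the equalities $\beta_{r'}=\cdots=\beta_{s'}$ are inherited from $\beta_r=\cdots=\beta_s$. Thus every relevant $\hat{\x}$ is exact, and \mythref{fitt} returns exactness at $(\tilde{\x},m(\tilde{\x}))$.

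For the necessity direction, assume $(\tilde{\x},m(\tilde{\x}))$ is exact, so by \mythref{fitt} every relevant $\hat{\x}$ is exact. To get the first conclusion, for each $k\in\{p,\ldots,n-b\}$ I would pick any relevant $\hat{\x}$ with $k$ $u$-entries; since $m(\hat{\x})=m(\x^k)$ by symmetry and $\beta_1\le\cdots\le\beta_n$, the estimate $\beta_0+\sum_j\beta_j\hat{x}_j\ge{\cal L}_k(\beta_0,\beta)$ (placing the larger weight $u$ on the smallest coefficients) together with validity of \eqref{sum24} at $(\x^k,m(\x^k))$ forces ${\cal L}_k(\beta_0,\beta)+\beta^{\prime}m(\x^k)=0$, which is exactness at $\x^k$. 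The hard part is the coefficient-equality $\beta_r=\cdots=\beta_s$, and the obstacle is precisely that $\tilde{\x}$ need not be sorted, so $N_1,N_2$ are neither a suffix nor a prefix and the interval $\{r,\ldots,s\}$ may contain coordinates fixed by $N_1$ or $N_2$; a naive variation argument only proves equality of the coefficients indexed by $N_3$. My remedy is to produce a single witness vertex that ``sees'' the whole interval: take the relevant $\hat{\x}^{*}$ with $\hat{x}^{*}_r=\ell$ and $\hat{x}^{*}_s=u$ (legitimate since $r\in N_1\cup N_3$ and $s\in N_2\cup N_3$) and the remaining free $N_3$-entries set arbitrarily. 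By fact (ii) its first $\ell$-index is exactly $r$ and its last $u$-index is exactly $s$; and when $r<s$ the relation $\hat{x}^{*}_r=\ell<u=\hat{x}^{*}_s$ with $r<s$ shows $\hat{\x}^{*}$ is unsorted, so \mythref{foundation45}, applied to the exact point $\hat{\x}^{*}$, yields $\beta_r=\cdots=\beta_s$ (the case $r=s$ being vacuous). Verifying that this witness is simultaneously relevant and unsorted is the crux that makes the necessity argument close.
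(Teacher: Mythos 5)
Your proof is correct and follows essentially the same route as the paper's: both reduce exactness at $(\tilde{\x},\tilde{y})$ to exactness at the extreme points of $X$ agreeing with $\tilde{\x}$ on $N_1\cup N_2$ via \mythref{fitt}, handle each such vertex with \mythref{foundation45}, and extract $\beta_r=\cdots=\beta_s$ from a witness vertex with $\hat{x}_r=\ell$ and $\hat{x}_s=u$. Your write-up is somewhat more explicit than the paper's (separating the two directions and verifying the witness is unsorted and admissible), but the underlying argument is the same.
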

\begin{proof}
If $n=(p+b)$ so that all entries of $\tilde{\x}$ take either value $\ell$ or $u,$ then $(\tilde{\x},\tilde{y})\in G$ is the extreme point $(\tilde{\x},m(\tilde{\x}))$ of $\conv{G},$ so the result is \mythref{foundation45}. Otherwise, $n \geq (p+b+1),$ and we adopt the notation of \mythref{fitt} that $N_1 \equiv\{j:\tilde{x}_j=\ell\},$ $N_2 \equiv\{j:\tilde{x}_j=u\},$ and $N_3 \equiv\{j:\ell<\tilde{x}_j<u\},$ so that $|N_1|=b,$ $|N_2|=p,$ and $|N_3|=n-(p+b).$ For any chosen $k \in\{0,\ldots,n-(p+b)\},$ define a new point $\hat{\x}$ in terms of $\tilde{\x}$ by setting $k$ values of $\hat{x}_j$ with $j \in N_3$ to the value $u$ and the remaining $(n-(p+b)-k)$ values of $\hat{x}_j$ with $j \in N_3$ to the value $\ell,$ and by setting $\hat{x}_j=\tilde{x}_j$ for all $j \in (N_1 \cup N_2).$ Then, if $\hat{\x} \neq \x^{k+p},$ \mythref{foundation45} gives us that $(\x,y)=(\hat{\x},m(\hat{\x}))$ satisfies \eqref{sum24} exactly if and only if $(\x^{k+p},m(\x^{k+p}))$ satisfies \eqref{sum24} exactly, and $\beta_{\hat{r}}=\ldots=\beta_{\hat{s}},$ where $\hat{r}$ is the smallest index $j$ such that $\hat{x}_j = \ell$ and $\hat{s}$ is the largest index $j$ such that $\hat{x}_j=u.$ Consequently, by considering all $k \in\{0,\ldots,n-(p+b)\}$ and all corresponding ${n-(p+b)} \choose {k}$ possible fixings of $\hat{x}_j$ for $j \in N_3,$ $(\x,y)=(\hat{\x},m(\hat{\x}))$ will satisfy \eqref{sum24} exactly at every extreme point $\hat{\x}$ of $X$ having $\hat{x}_j= \ell$ for all $j \in N_1$ and $\hat{x}_j= u$ for all $j \in N_2$ if and only if $(\x,y)=(\x^j,m(\x^j))$ satisfies \eqref{sum24} exactly for all $j \in\{p,\ldots,n-b\},$ and $\beta_r=\ldots=\beta_s.$ The ``only if" consequence that $\beta_r=\ldots=\beta_s$ is due to the following: if $r=s,$ the result is trivial while if $r<s,$ then any $\hat{\x}$ defined as above so that $\hat{x}_r=\ell$ and $\hat{x}_s=u,$ regardless of $k,$ will yield $r=\hat{r}$ and $s=\hat{s}.$ Then implication $3 \rightarrow 1$ of \mythref{fitt} gives us that $(\tilde{\x},\tilde{y})$ satisfies \eqref{sum24} exactly, as desired.
\end{proof}

\mythref{exact1111} simplifies when the valid core inequality \eqref{sum24} is satisfied exactly at $(\x^j,m(\x^j))$ for all $j \in \{0, \ldots, n\}.$ In this case, the parameters $p$ and $b$ within the theorem are no longer needed, as they serve only to restrict the points $(\x^j,m(\x^j))$ that must satisfy the inequality exactly. The simplification is below.

\begin{corollary}\thlabel{exactly1110}
Given a point $(\tilde{\x},\tilde{y})\in G$ with $\tilde{\x}$ not an extreme point of the simplex $\simplex$ of \eqref{simpleex}, let $r$ be the smallest index $j$ such that $\tilde{x}_j<u$ and $s$ be the largest index $j$ such that $\tilde{x}_j> \ell.$ Given a valid core inequality \eqref{sum24} that is satisfied exactly at $(\x^j,m(\x^j))$ for all $j \in \{0, \ldots, n\},$ the point $(\tilde{\x},\tilde{y})$ satisfies this inequality exactly if and only if $\beta_r=\ldots=\beta_s.$ 
\end{corollary}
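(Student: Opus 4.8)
The plan is to obtain this corollary as an immediate specialization of \mythref{exact1111}, so almost all of the work has already been done. I would first reintroduce the two parameters that the corollary deliberately suppresses: let $p$ be the number of entries of $\tilde{\x}$ equal to $u$ and let $b$ be the number equal to $\ell$. With $r$ and $s$ defined as in the statement (the smallest index $j$ with $\tilde{x}_j<u$ and the largest index $j$ with $\tilde{x}_j>\ell$), \mythref{exact1111} applied to $(\tilde{\x},\tilde{y})$ reads: the point $(\tilde{\x},\tilde{y})$ satisfies \eqref{sum24} exactly if and only if both (i) $(\x^j,m(\x^j))$ satisfies \eqref{sum24} exactly for every $j\in\{p,\ldots,n-b\}$, and (ii) $\beta_r=\ldots=\beta_s$.

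The crux of the argument is the observation that, under the corollary's hypothesis, condition (i) is automatically satisfied and hence drops out. Since the $p$ coordinates of $\tilde{\x}$ at value $u$ and the $b$ coordinates at value $\ell$ are distinct among the $n$ coordinates, we have $p+b\leq n$, whence $0\leq p\leq n-b\leq n$ and therefore $\{p,\ldots,n-b\}\subseteq\{0,\ldots,n\}$. The corollary assumes \eqref{sum24} is satisfied exactly at $(\x^j,m(\x^j))$ for \emph{all} $j\in\{0,\ldots,n\}$, so in particular it is satisfied exactly for every $j$ in the smaller set $\{p,\ldots,n-b\}$; thus condition (i) holds vacuously. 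The biconditional of \mythref{exact1111} then collapses to the assertion that $(\tilde{\x},\tilde{y})$ satisfies \eqref{sum24} exactly if and only if $\beta_r=\ldots=\beta_s$, which is precisely the claim.

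I expect no real obstacle here beyond careful bookkeeping, since the substantive content—the reduction of exactness at an arbitrary $(\tilde{\x},\tilde{y})\in G$ to exactness at the finitely many extreme points of $\simplex$—is already carried by \mythref{exact1111}. The only points meriting a line of care are confirming that the index range $\{p,\ldots,n-b\}$ is nonempty and contained in $\{0,\ldots,n\}$, both of which follow from $p+b\leq n$, and noting that the standing hypothesis that $\tilde{\x}$ is not an extreme point of $\simplex$ is inherited verbatim from \mythref{exact1111} and requires no separate handling. The explanatory remark preceding the corollary already signals this structure, so the written proof can be kept to a single short paragraph that invokes the theorem and remarks that the index containment renders its first condition automatic.
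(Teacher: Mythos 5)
Your proposal is correct and matches the paper's own justification: the paper proves this corollary only via the remark preceding it, which says exactly that the hypothesis of exactness at all $(\x^j,m(\x^j))$, $j\in\{0,\ldots,n\}$, renders condition (i) of \mythref{exact1111} automatic (since $\{p,\ldots,n-b\}\subseteq\{0,\ldots,n\}$), so the biconditional collapses to $\beta_r=\ldots=\beta_s$. Nothing further is needed.
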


\mythref{exact1111} also simplifies when the valid core inequality \eqref{sum24} has, for some scalar $\bar{\beta},$ $\beta_j=\bar{\beta}$ for all $j \in N.$ In this case, the parameters $r$ and $s$ within the theorem are no longer needed, as they serve only to restrict a subset of the coefficients $\beta_j$ to equal. The simplification is stated formally below.

\begin{corollary}\thlabel{exactly1111}
Given a point $(\tilde{\x},\tilde{y})\in G$ with $\tilde{\x}$ not an extreme point of the simplex $\simplex$ of \eqref{simpleex}, let $p$ be the number of entries of $\tilde{\x}$ having value $u$ and $b$ be the number of entries of $\tilde{\x}$ having value $\ell.$ Then $(\tilde{\x},\tilde{y})$ satisfies a valid core inequality \eqref{sum24} having $\beta_j=\bar{\beta}$ for all $j \in N$ exactly if and only if $(\x,y)=(\x^j,m(\x^j))$ satisfies \eqref{sum24} exactly for all $j \in\{p,\ldots,n-b\}.$ 
\end{corollary}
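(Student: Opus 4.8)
The plan is to obtain \mythref{exactly1111} as an immediate specialization of \mythref{exact1111}, in which the hypothesis $\beta_j=\bar{\beta}$ for all $j\in N$ renders the coefficient-equality requirement vacuous. Recall that \mythref{exact1111} characterizes exactness at $(\tilde{\x},\tilde{y})$ by the conjunction of two conditions: first, that $(\x,y)=(\x^j,m(\x^j))$ satisfies \eqref{sum24} exactly for all $j\in\{p,\ldots,n-b\}$; and second, that $\beta_r=\ldots=\beta_s$, where $r$ is the smallest index with $\tilde{x}_j<u$ and $s$ is the largest index with $\tilde{x}_j>\ell$. My first step is to observe that these indices $r$ and $s$ are well-defined under the standing hypothesis that $\tilde{\x}$ is not a vertex of $\simplex$: since $\tilde{\x}\neq\x^{n}$ there is a coordinate strictly below $u$, and since $\tilde{\x}\neq\x^{0}$ there is a coordinate strictly above $\ell$. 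These are precisely the hypotheses already demanded by \mythref{exact1111}, which coincide with those of the corollary, so the theorem applies verbatim and no separate argument is needed.

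The second and final step is to note that when every coefficient equals the common value $\bar{\beta}$, the second condition holds automatically: the subsequence $\beta_r,\beta_{r+1},\ldots,\beta_s$ consists of identical entries for any indices $r\le s$, so $\beta_r=\ldots=\beta_s$ carries no information and may be discarded. The biconditional of \mythref{exact1111} therefore collapses to exactly the asserted equivalence, namely that $(\tilde{\x},\tilde{y})$ satisfies \eqref{sum24} exactly if and only if $(\x^j,m(\x^j))$ satisfies it exactly for every $j\in\{p,\ldots,n-b\}$. There is no genuine obstacle in this derivation; the one point meriting a word of care is simply confirming that the theorem's parameters $r$ and $s$—which in the general setting link the geometry of $\tilde{\x}$ to the coefficient pattern of the inequality—become inert precisely because that coefficient pattern is constant. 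This is exactly the remark anticipated in the paragraph preceding the statement, and it is what allows $p$ and $b$ alone to govern the characterization.
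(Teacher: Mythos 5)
Your proposal is correct and matches the paper's own justification: the paper states the corollary without a formal proof, noting only (in the preceding paragraph) that the condition $\beta_r=\ldots=\beta_s$ in \mythref{exact1111} becomes vacuous when all coefficients equal $\bar{\beta}$, which is precisely your argument. Your added remark that $r$ and $s$ remain well-defined because $\tilde{\x}$ is not a vertex of $\simplex$ is a correct (if unnecessary) piece of diligence.
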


\section{Supermodular Functions}	\label{sec:supermod}

In this subsection, we describe $\conv{G}$ for SMPs $m(\x)$ that are \emph{supermodular} over the extreme points of $X$. Such a function $m(\x)$ over these $2^n$ points can be expressed as a set function $f(A)$ defined over $A \subseteq N$ so that $f(A)=m(\x)$ when evaluated at that extreme point $\x$ having $x_j=u$ for $j \in A$ and $x_j=\ell$ for $j \notin A.$  Recall that a set function $f$ is defined to be supermodular over $N$ if and only if
\begin{equation}
f(S \cup \{r\})-f(S) \leq f(S \cup \{r,t\})-f(S \cup \{t\}) \; \mbox{ for } r,t \in N, r \neq t, \mbox{ and }S\subseteq N \backslash \{r,t\}, \label{rest0777}
\end{equation}
we have that the function $m(\x)$ is supermodular over the $2^n$ extreme points of $X$ if and only if 
\begin{equation}
m(\x^{k})-m(\x^{k-1}) \leq m(\x^{k+1})-m(\x^{k}), \quad  \forall \; k \in \{1, \ldots, n-1\}. \label{rest0}
\end{equation}
This equivalence follows by considering, for each $k \in \{1, \ldots, n-1\},$ all sets $S \subseteq N$ within \eqref{rest0777} having $|S|=(k-1),$ and all $\{r,t\} \in N \backslash S,$ and by invoking the symmetry of $m(\x)$ to obtain that $f(T)=m(\x^k)$ for all $T \subseteq N$ with $|T|=k.$ The function $m(\x)$ is \emph{strictly supermodular} over the $2^n$ extreme points of $X$ if and only if the $(n-1)$ inequalities of \eqref{rest0} are satisfied strictly. Henceforth, for brevity, we will refer to functions $m(\x)$ that are (strictly) supermodular over the $2^n$ extreme points of $X$ as being (strictly) supermodular.

The below theorem explicitly states all core facets for supermodular $m(\x).$

\begin{theorem} \thlabel{result1}
When $m(\x)$ is supermodular, there exist at most $(n+1)$ core facets for $\conv{G},$ and all such facets, subject to repetition, are 
\begin{equation}
\frac{um(\x^0)-\ell m(\x^n)}{u-\ell}+\sum_{j=1}^n\left(\frac{m(\x^j)-m(\x^{j-1})}{u-\ell}\right)x_j-y \geq 0, \label{res1}
\end{equation}
and
\begin{equation}
-m(\x^k) - \left(\frac{m(\x^{k})-m(\x^{k-1})}{u-\ell}\right)\left(\sum_{j=1}^nx_j-[ku+(n-k)\ell]\right)+y \geq 0, \quad \forall \; k \in N. \label{res2}
\end{equation}
\end{theorem}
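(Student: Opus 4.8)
The plan is to separately establish that the single inequality \eqref{res1} is the unique core facet with $\beta'=-1$ (concave envelope side), and that the $n$ inequalities \eqref{res2} exhaust the core facets with $\beta'=1$ (convex envelope side). The split is natural because, as noted in the preliminaries, a facet with $\beta'=-1$ cuts the hypograph (concave envelope) while $\beta'=1$ cuts the epigraph (convex envelope), and supermodularity of $m$ says exactly that the sequence $m(\x^0),\ldots,m(\x^n)$ is (discrete) convex in $k$. So the shape of the problem is driven entirely by the convexity of $k\mapsto m(\x^k)$.

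For \eqref{res1}, I would first verify validity via \mythref{foundation}: one computes $\mathcal{L}_k(\beta_0,\beta)$ for the coefficients appearing in \eqref{res1} and checks that $\mathcal{L}_k(\beta_0,\beta)-m(\x^k)=0$ for every $k\in\{0,\ldots,n\}$. Indeed the coefficients $\beta_j=\bigl(m(\x^j)-m(\x^{j-1})\bigr)/(u-\ell)$ are precisely the slopes of the piecewise-linear interpolant through the points $(k,\,m(\x^k))$, and supermodularity \eqref{rest0} guarantees $\beta_1\le\cdots\le\beta_n$, so this is a genuine core inequality; the telescoping choice of $\beta_0$ forces exactness at all $n+1$ extreme points $\x^k$. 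Since $\beta_1\le\cdots\le\beta_n$ need not be strict, I would invoke Corollary~\ref{sfirst} in the strictly supermodular case (distinct coefficients, exact at all $\x^k$, hence the unique core facet with $\beta'=-1$), and handle equalities/repetitions by appealing to \mythref{foundation22} together with \mythref{equalcoeff8}; the latter pins down which coefficients must coincide when some inequalities in \eqref{rest0} hold with equality, explaining the ``subject to repetition'' caveat.

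For \eqref{res2}, each inequality has all coefficients equal to the common slope $\bigl(m(\x^k)-m(\x^{k-1})\bigr)/(u-\ell)$, so it is a core inequality with $\beta'=1$ whose coefficient vector is constant. Validity again follows from \mythref{foundation}: the inequality is a supporting line to the convex function $k\mapsto m(\x^k)$ at the segment between $k-1$ and $k$, so $\mathcal{L}_j(\beta_0,\beta)+m(\x^j)\ge 0$ for all $j$ with equality at $j=k-1$ and $j=k$ exactly when the sequence is convex. To show each is a facet I would apply \mythref{equalcoeff}: it is satisfied exactly at the two extreme points $\x^{k-1}$ and $\x^{k}$ (at least one of which is neither $\x^0$ nor $\x^n$ for every $k\in N$), which is precisely the sufficient condition there for an equal-coefficient inequality. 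Conversely, that these are the only core facets with $\beta'=1$ follows because any such facet, being a supporting line of the convex sequence, must be tangent along an edge of the lower convex envelope, and \mythref{foundation22}/\mythref{foundation24} rule out additional ones.

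The main obstacle I anticipate is the bookkeeping around repetition and non-strict supermodularity: when consecutive slopes in \eqref{rest0} coincide, several of the $n+1$ listed inequalities collapse to the same facet, and some of the individual inequalities in \eqref{res2} may fail to be facets (e.g.\ if a slope is repeated, the two-point exactness in \mythref{equalcoeff} may be subsumed into a longer flat stretch forcing a larger block of equal coefficients via \mythref{equalcoeff8}). Getting the ``at most $(n+1)$'' count and the ``subject to repetition'' phrasing exactly right, rather than the clean bijection available in the strictly supermodular case, is where the careful case analysis lives; the validity and generic-facetness arguments themselves are routine given the lemmas already proved.
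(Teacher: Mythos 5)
Your outline gets the validity and facetness of the listed inequalities right, and by the same means as the paper (\mythref{foundation} for validity, the $n+1$ exact points $(\x^k,m(\x^k))$ for \eqref{res1}, \mythref{equalcoeff} for each inequality in \eqref{res2}). The gaps are in the exhaustiveness claims, which is where the actual content of ``at most $n+1$ core facets'' lives. For $\beta'=1$, your assertion that any core facet ``must be tangent along an edge of the lower convex envelope'' presupposes that its trace $k\mapsto{\cal L}_k(\beta_0,\x[\beta])$ behaves like a line, but a core facet need not have equal coefficients a priori, and \mythref{foundation22}/\mythref{foundation24} do not rule out the alternatives. The paper's proof spends most of its effort exactly here: it shows that a core facet exact at $(\x^r,m(\x^r))$ and $(\x^s,m(\x^s))$ with $r<s$ must also be exact at $(\x^{r+1},m(\x^{r+1}))$ (via a chain of inequalities driven by \eqref{rest0}), hence at all intermediate $\x^p$; differencing consecutive exactness conditions then gives $\beta_p(u-\ell)=-(m(\x^p)-m(\x^{p-1}))$ for $p\in\{r+1,\dots,s\}$, and since the left side is nondecreasing while the right side is nonincreasing, all these coefficients coincide; finally \mythref{equalcoeff8} extends the equality to $\beta_1=\dots=\beta_{r+1}$ and $\beta_s=\dots=\beta_n$, pinning the facet down as \eqref{res2} with $k=r+1$. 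Your geometric intuition can be made rigorous (${\cal L}_k$ is discretely convex in $k$ because $\x[\beta]$ is nondecreasing, $-m(\x^k)$ is discretely concave by supermodularity, and a convex sequence dominating a concave one with equality at $r$ and $s$ must agree with it on all of $\{r,\dots,s\}$), but that argument, or the paper's inequality chain, must actually be supplied.

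For $\beta'=-1$, Corollary~\ref{sfirst} covers only the strictly supermodular case, and \mythref{foundation22} plus \mythref{equalcoeff8} do not give uniqueness when some inequalities in \eqref{rest0} are tight. The tool that works in general is \mythref{foundation24}: inequality \eqref{res1} achieves ${\cal L}_k(\beta_0,\x[\beta])=m(\x^k)$ for every $k$, which is the minimum compatible with validity when $\beta'=-1$, so any distinct valid core inequality is dominated and cannot be a facet. Separately, your anticipated difficulty is misplaced: when consecutive slopes coincide, the corresponding inequalities in \eqref{res2} become literally identical (the constants telescope to match), so nothing ``fails to be a facet'' --- the list merely contains repetitions, and \mythref{equalcoeff} applies to each member unchanged. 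The genuinely delicate step is the consecutive-exactness argument above, not the repetition bookkeeping.
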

\begin{proof}
The $(n+1)$ inequalities of \eqref{res1} and \eqref{res2} are valid for $\conv{G}$ by \mythref{foundation}, as they are readily verified to hold for all $(\x^k,m(\x^k)), k \in\{0, \ldots, n\}.$ Inequality~\eqref{res1} is a facet because these same $(n+1)$ affinely independent points satisfy it exactly, and it is a core facet because the $\beta_j$ are nondecreasing by \eqref{rest0}. In addition, no other core facet can exist with $\beta^{\prime}=-1$ by \mythref{foundation24} because the functions ${\cal L}_k(\beta_0,\x[\beta])$ from \eqref{anoto25}, with $(\beta_0,\x[\beta])$ defined in terms of \eqref{res1}, have ${\cal L}_k(\beta_0,\x[\beta])=m(\x^k)$ for all $k \in \{0, \ldots, n\}.$ Relative to \eqref{res2}, for each $k \in N,$ the corresponding inequality is a core facet by \mythref{equalcoeff}, as it is satisfied exactly at the two points $(\x^{k-1},m(\x^{k-1}))$ and $(\x^k,m(\x^k)).$ To show that no other core facet can exist with $\beta^{\prime}=1$ and complete the proof, it is sufficient to show that every core facet $\beta_0+\sum_{j=1}^n\beta_jx_j+y \geq 0$ which is satisfied exactly at some $(\x^{r},m(\x^{r}))$ and $(\x^{s},m(\x^{s}))$ with $r<s,$ is also satisfied exactly at $(\x^{r+1},m(\x^{r+1})).$ Then induction has the core facet being satisfied exactly at $(\x^{p},m(\x^{p}))$ for all $p \in \{r, \ldots, s\}$ so that, for each $p \in \{r+1, \ldots, s\},$ by inserting $(\x^{p-1},m(\x^{p-1}))$ and $(\x^{p},m(\x^{p}))$ into the facet and subtracting the first expression from the second, we obtain $\beta_{p}(u-\ell)=-\left(m(\x^{p})-m(\x^{p-1})\right).$ As the $\beta_p$ are nondecreasing and the values $-\left(m(\x^{p})-m(\x^{p-1})\right)$ are nonincreasing, it will then follow that $\beta_{p}(u-\ell)=-\left(m(\x^{r+1})-m(\x^r)\right)$ for all $p \in \{r+1, \ldots, s\}.$ Hence, by selecting $r=\mbox{argmin}_{k \in \{0, \ldots, n-1\}}\left\{{\cal L}_k(\beta_0,\x[\beta]) +m(\x^k)=0\right\}$ and $s=\mbox{argmax}_{k \in \{1, \ldots, n\}}\left\{{\cal L}_k(\beta_0,\x[\beta]) + m(\x^k)=0\right\}$, \mythref{equalcoeff8} will give us that $\beta_j=-\left(\frac{m(\x^{r+1})-m(\x^r)}{u-\ell}\right)$ for all $j \in N.$ Then the chosen core facet must be inequality \eqref{res2} with $k=(r+1).$ 

To show that the core facet is satisfied exactly at $(\x^{r+1},m(\x^{r+1})),$ we restrict attention to $s \geq (r+2)$ since the result is trivial for $s=(r+1).$ We have that

\begin{eqnarray*}
m(\x^{r+1}) &\le& m(\x^{r})+\frac{m(\x^s)-m(\x^r)}{s-r} \\
&=&-\beta_0-u\left(\sum_{j=1}^{r}\beta_j\right)-\ell\left(\sum_{j=r+1}^{n}\beta_j\right)-\left(\frac{u-\ell}{s-r}\right)\left(\sum_{j=r+1}^{s}\beta_j\right) \\
&=&-\beta_0-u\left(\sum_{j=1}^{r+1}\beta_j\right)-\ell\left(\sum_{j=r+2}^{n}\beta_j\right)-\left(\frac{u-\ell}{s-r}\right)\left(\sum_{j=r+1}^{s}(\beta_j-\beta_{r+1})\right) \\
&\leq& -\beta_0-u\left(\sum_{j=1}^{r+1}\beta_j\right)-\ell\left(\sum_{j=r+2}^{n}\beta_j\right) \\
&\leq& m(\x^{r+1}),
\end{eqnarray*}
where the first inequality follows from summing the $(s-r)$ inequalities $m(\x^{r+1})-m(\x^r) \leq m(\x^{k+1})-m(\x^k)$ for $k \in \{r, \ldots, s-1\}$ that are implied by \eqref{rest0} for $k>r,$ the first equality follows from the facet holding exactly at $(\x^r,m(\x^r))$ and $(\x^{s},m(\x^{s})),$ the second equality is algebra, the second inequality follows from the nondecreasing values of $\beta_j,$ and the final inequality follows from the feasibility of $(\x^{r+1},m(\x^{r+1}))$ to $\conv{G}.$ Then $m(\x^{r+1})=-\beta_0-u\left(\sum_{j=1}^{r+1}\beta_j\right)-\ell\left(\sum_{j=r+2}^{n}\beta_j\right)$ so that the core facet is satisfied exactly at $(\x^{r+1},m(\x^{r+1})).$ The proof is complete.
\end{proof}

\mythref{result1} states that there exist \emph{at most} $(n+1)$ core facets because the inequalities of \eqref{res2} can repeat. Repetition will occur whenever $\left(m(\x^{p})-m(\x^{p-1})\right)=\left(m(\x^{q})-m(\x^{q-1})\right)$ for distinct $p,q \in N.$ Notably, if $m(\x)$ is strictly supermodular, then  repetitions will not occur so that \eqref{res2} will contain $n$ distinct facets. 

Subject to permutations of $\x[\beta],$ inequalities \eqref{res1} define the concave envelope of $m(\x).$ \akshay{These inequalities are a special case of the polymatroid inequalities that are known for general supermodular functions \cite{lovasz1983submod,tawarmalani2013explicit}. Polymatroid inequalities are known to be separated easily in $O(n\log{n})$ time using a sorting algorithm since \citet{edmonds1970submodular} showed how to optimize a linear function over the polymatroid polyhedron corresponding to a submodular function.} Inequalities~\eqref{res2} have equal coefficients on the variables and hence they do not need to be permuted and define the convex envelope of $m(\x)$. \akshay{Our proof for these inequalities is an alternative proof to that of \citet[Theorem 4.6]{tawarmalani2013explicit}.} For a submodular function $m(\x)$ (meaning that the inequality in \eqref{rest0} is switched to $\ge$), inequalities~\eqref{res1}, subject to permutations of $\x[\beta],$ define the convex envelope, and inequalities~\eqref{res2} define the concave envelope subject to the following modifications:  all occurrences of $m(\x^j),$ $j \in \{0, \ldots, n\},$ and $y$ are negated.

Now we use \mythref{exactly1110,exactly1111} to identify, for each of the $(n+1)$ core facets \eqref{res1} and \eqref{res2} of \mythref{result1}, the set of all points $(\x,y) \in G$ that satisfies it exactly. A key ingredient  of \mythref{result1} that is useful in our upcoming proof is that every core facet $\beta_0+\sum_{j=1}^n\beta_jx_j+y \geq 0$ of the form \eqref{res2} which is satisfied exactly at some $(\x^{r},m(\x^{r}))$ and $(\x^{s},m(\x^{s}))$ with $r<s,$ is also satisfied exactly at $(\x^{p},m(\x^{p}))$ for all $p \in \{r, \ldots, s\}.$ We then have that the set of points $(\x^j,m(\x^j))$ which satisfies the facet exactly must be consecutive. 

\begin{theorem}	\thlabel{newsuper100}
Let $m(\x)$ be supermodular and $(\x,y)\in G$.
\begin{enumerate}
\item $(\x,y)$ satisfies \eqref{res1} exactly if and only if $\x=\x^{k}$ for some $k$, or $\beta_{r}=\cdots=\beta_{s}$ where $r=\min\{j\colon x_{j}<u\}$ and $s = \max\{j\colon x_{j} > \ell\}$.
\item $(\x,y)$ satisfies \eqref{res2} exactly if and only if $(\x^{j},m(\x^{j}))$ satisfies \eqref{res2} exactly for $j=p,\dots,n-q$, where $p=|\{j\colon x_{j}=u \} |$ and $q=|\{j\colon x_{j}=\ell \} |$.
\item $(\x,y)$ satisfies \eqref{res2} exactly if and only if $|\{j\colon x_{j}=u \}| \ge v$ and $|\{j\colon x_{j}=\ell \}| \ge n-w$, where $v$ and $w$ are the smallest and largest, respectively, values of $j$ such that the point $(\x^{j},m(\x^{j}))$ satisfies \eqref{res2} exactly.
\end{enumerate}
\end{theorem}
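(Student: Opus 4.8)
The plan is to obtain all three parts as direct applications of Corollaries~\mythref{exactly1110} and~\mythref{exactly1111}, whose hypotheses are met exactly by the two families of core facets produced in \mythref{result1}; the only genuine care needed is in handling points $x$ that are extreme points of the simplex $\simplex$, which those corollaries explicitly exclude.

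For Part~1, I would first recall from the proof of \mythref{result1} that inequality~\eqref{res1} is satisfied exactly at $(\x^{j},\mk[j])$ for every $j\in\{0,\dots,n\}$, which is precisely the standing hypothesis of \mythref{exactly1110}. Hence, for any $(\x,y)\in G$ whose $\x$ is not a vertex of $\simplex$, that corollary immediately yields that \eqref{res1} is tight at $(\x,y)$ if and only if $\beta_{r}=\cdots=\beta_{s}$ with $r,s$ as defined. The complementary case $\x=\x^{k}$ is trivial, since \eqref{res1} passes through each $(\x^{k},\mk)$; this furnishes the first alternative of the statement.

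For Part~2, I would note that inequality~\eqref{res2} has all its variable coefficients equal to the common scalar $\bar{\beta}=-\bigl(\mk-\mk[k-1]\bigr)/(u-\ell)$, so it falls squarely under the hypothesis of \mythref{exactly1111}. Applying that corollary with $p=|\{j\colon x_{j}=u\}|$ and $b=q=|\{j\colon x_{j}=\ell\}|$ gives exactly the claimed equivalence. When $\x$ is a vertex of $\simplex$, say $\x=\x^{k}$, the index range $\{p,\dots,n-q\}$ collapses to the single value $\{k\}$ and the assertion degenerates to the tautology that $(\x^{k},\mk)$ satisfies \eqref{res2} exactly if and only if it does, so the equivalence holds uniformly.

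Finally, Part~3 is a repackaging of Part~2 using the consecutiveness property recorded immediately before the theorem: the set of indices $j$ for which $(\x^{j},\mk[j])$ satisfies \eqref{res2} exactly is the contiguous block $\{v,\dots,w\}$. Since $\{p,\dots,n-q\}$ is itself contiguous and nonempty (as $p+q\le n$), the Part~2 condition that $\{p,\dots,n-q\}\subseteq\{v,\dots,w\}$ reduces to the two endpoint inequalities $p\ge v$ and $n-q\le w$, which are exactly $|\{j\colon x_{j}=u\}|\ge v$ and $|\{j\colon x_{j}=\ell\}|\ge n-w$. The main obstacle here is bookkeeping rather than depth: one must confirm that the corollaries' exclusion of simplex vertices leaves no gap, and that box vertices which are mere permutations of some $\x^{k}$ (vertices of $X$ but not of $\simplex$) are correctly subsumed by the corollaries, where the fractional block $N_{3}$ is empty. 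Symmetry of $m$ together with the equal-coefficient structure of \eqref{res2} guarantees that these vertices behave identically to the corresponding $\x^{k}$.
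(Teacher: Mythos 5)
Your proposal is correct and follows essentially the same route as the paper: Part~1 via \mythref{exactly1110} (using that \eqref{res1} is tight at every $(\x^{k},\mk)$), Part~2 via \mythref{exactly1111} with the common coefficient $\bar{\beta}$ of \eqref{res2}, and Part~3 as a repackaging of Part~2 through the consecutiveness of the tight simplex vertices. The extra bookkeeping you supply for simplex vertices and for box vertices that are permutations of some $\x^{k}$ is consistent with, and slightly more explicit than, the paper's argument.
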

\begin{proof}
(1) As noted in the proof of \mythref{result1}, the points $(\x^k,m(\x^k))$ satisfy \eqref{res1} exactly for all $k \in \{0, \ldots, n\}.$ Then the result trivially holds true when ${\x}$ is an extreme point of $\simplex,$ and it holds true when ${\x}$ is not an extreme point of $\simplex$ by \mythref{exactly1110}.

(2) The result trivially holds true when ${\x}$ is an extreme point of $\simplex,$ and it holds true when ${\x}$ is not an extreme point of $\simplex$ by \mythref{exactly1111} with $\bar{\beta}=-\left(\frac{m(\x^k)-m(\x^{k-1}}{u-\ell}\right).$

(3) This is a restatement of above using Remark 6 of \mythref{result1} which states that the set of extreme points of $\simplex$ satisfying the facet exactly must be consecutive.
\end{proof}

More refined characterizations for exactness hold when the function is strictly supermodular.

\begin{corollary} \thlabel{exactstrictsupermod}
Let $m(\x)$ be strictly supermodular and $(\x,y)\in G$.
\begin{enumerate}
\item $(\x,y)$ satisfies \eqref{res1} exactly if and only if $\x$ is a convex combination of two consecutive extreme points $\x^j$ and $\x^{j+1}$ of $\simplex.$
\item $(\x,y)$ satisfies \eqref{res2} exactly if and only if $|\{j\colon x_{j}=u_{j} \} | \ge k-1$ and $| \{j\colon x_{j} = \ell \}| \ge n-k$.
\end{enumerate} 
\end{corollary}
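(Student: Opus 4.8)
The plan is to derive both statements directly from Theorem~\mythref{newsuper100}, using the single observation that strict supermodularity upgrades the weak monotonicities underlying that theorem to strict ones. Concretely, \eqref{rest0} holding strictly means the difference sequence $\delta_j := m(\x^{j})-m(\x^{j-1})$ is \emph{strictly} increasing in $j$; equivalently, the coefficients $\beta_j = \delta_j/(u-\ell)$ appearing in \eqref{res1} are strictly increasing, $\beta_1 < \cdots < \beta_n$. Each part then follows by feeding this strictness into the corresponding characterization from Theorem~\mythref{newsuper100}, together with the explicit coordinates of $\x^{j}$ in \eqref{def:xk}.

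For part (1), I would start from Theorem~\mythref{newsuper100}(1): $(\x,y)$ satisfies \eqref{res1} exactly iff $\x = \x^{k}$ for some $k$ or $\beta_r = \cdots = \beta_s$, where $r = \min\{j : x_j < u\}$ and $s = \max\{j : x_j > \ell\}$. Since the $\beta_j$ are now pairwise distinct, the condition $\beta_r = \cdots = \beta_s$ collapses to $r = s$. I would then unpack what $r = s = t$ means coordinatewise: $x_j = u$ for $j < t$, $x_j = \ell$ for $j > t$, and $\ell < x_t < u$, which is precisely a strict convex combination of the consecutive vertices $\x^{t-1}$ and $\x^{t}$. Absorbing the extreme-point case $\x = \x^{k}$ as a degenerate convex combination of $\x^{k}$ with a neighbour yields the stated equivalence; the converse direction is immediate because any strict convex combination of $\x^{j}$ and $\x^{j+1}$ has its unique fractional coordinate at index $j+1$, giving $r = s = j+1$.

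For part (2), I would fix the facet \eqref{res2} with index $k$ and apply Theorem~\mythref{newsuper100}(2), which reduces exactness of $(\x,y)$ to exactness of \eqref{res2} at $(\x^{j},m(\x^{j}))$ for every $j$ in the consecutive range $\{p,\dots,n-q\}$, where $p = |\{j : x_j = u\}|$ and $q = |\{j : x_j = \ell\}|$. The key computation is to identify which vertices $\x^{j}$ make \eqref{res2} tight: substituting $\x^{j}$ and using $\sum_i x^j_i - [ku+(n-k)\ell] = (j-k)(u-\ell)$ leaves the residual $m(\x^{j}) - m(\x^{k}) - (j-k)\delta_k$, which telescopes into a signed sum of differences $\delta_i - \delta_k$ over the indices $i$ between $k$ and $j$. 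Strict monotonicity of $\delta$ forces every such term to be nonzero of fixed sign, so the residual vanishes exactly for $j \in \{k-1,k\}$. Hence the consecutive range $\{p,\dots,n-q\}$ must lie inside $\{k-1,k\}$, which is equivalent to $p \ge k-1$ and $n-q \le k$, i.e. $|\{j : x_j = u\}| \ge k-1$ and $|\{j : x_j = \ell\}| \ge n-k$, as claimed.

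The only step demanding genuine care is this vertex computation in part (2): one must separate the cases $j = k$, $j > k$, and $j < k$ of the telescoped residual and confirm that strict supermodularity rules out tightness for all $j \notin \{k-1,k\}$, while checking that the degenerate subcase $j = k-1$ (empty telescoped sum) indeed gives residual zero. Everything else is bookkeeping with the coordinates of $\x^{j}$ and the monotonicity of $\delta$, so I do not anticipate a substantive obstacle beyond organizing these cases cleanly.
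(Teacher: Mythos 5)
Your proposal is correct and follows essentially the same route as the paper: both parts reduce to \mythref{newsuper100} and exploit that strict supermodularity makes the differences $m(\x^{j})-m(\x^{j-1})$ strictly increasing, so that in part (1) the condition $\beta_r=\cdots=\beta_s$ collapses to $r=s$ and in part (2) the tight simplex vertices are pinned down to $\{\x^{k-1},\x^{k}\}$. The only cosmetic difference is that in part (2) you identify those tight vertices by a direct telescoping computation of the residual $m(\x^{j})-m(\x^{k})-(j-k)\bigl(m(\x^{k})-m(\x^{k-1})\bigr)$, whereas the paper verifies tightness at $\x^{k-1},\x^{k}$ and rules out a third consecutive tight vertex by contradiction.
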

\begin{proof}
(1) Follows directly from \mythref{newsuper100} since the coefficients $\beta_j= \left(\frac{m(\x^j)-m(\x^{j-1})}{u-\ell}\right)$ for all $j \in N$ of \eqref{res1} are distinct, as every inequality within \eqref{rest0} is satisfied strictly for strictly supermodular $m(\x).$

(2) Consider any $k \in N.$ As noted in the proof of \mythref{result1} and readily verified, $(\x,y)=(\x^{k-1},m(\x^{k-1}))$ and $(\x,y)=(\x^k,m(\x^k))$ each satisfy the core facet exactly. Thus, it is sufficient to show that supermodular $m(\x)$ enforces $r=(k-1)$ and $s=k$ in \mythref{newsuper100}. By contradiction, suppose that $s \geq (r+2)$ so that each of $(\x^{r},m(\x^{r})),$ $(\x^{r+1},m(\x^{r+1})),$ and $(\x^{r+2},m(\x^{r+2}))$ satisfy the facet exactly. As in the proof of \mythref{result1}, for each $p \in \{r+1,r+2\},$ by inserting $(\x^{p-1},m(\x^{p-1}))$ and $(\x^p,m(\x^p))$ into the facet and subtracting the second expression from the first, we obtain
\begin{equation}
 \frac{-\left(m(\x^{r+1})-m(\x^r)\right)}{u-\ell}=\beta_{r+1}=\beta_{r+2}=\frac{-\left(m(\x^{r+2})-m(\x^{r+1})\right)}{u-\ell},\nonumber
\end{equation}
where $\beta_{r+1}=\beta_{r+2}$ is due to the form of \eqref{res2}. The strict inequality of \eqref{rest0} for supermodular $m(\x)$ with $k=(r+1)$ yields the contradiction that $\beta_{r+1} > \beta_{r+2}.$
\end{proof}

We finish this section by remarking on supermodularity of an SMP, giving us some important families of functions $m(\x)$ so that \mythref{result1} characterizes their convex hull. First we provide an alternate characterization to~\eqref{rest0} by expressing these requirements in terms of the coefficients $c_{j}$ of the function $m$.

\begin{proposition}	\thlabel{supermodcond}
$m(\x)$ is supermodular over $X$ if and only if $\sum_{d=2}^{n}c_{d}(u-\ell)^{2}\vartheta_{k,d} \ge 0$ for $k=1,\dots,n-1$, where $\vartheta_{2,2} = 1$ and
\[
\vartheta_{k,d} = \sum_{\substack{J \subseteq N\setminus\{k,k+1\} \\ |J|=d-2}}\,\prod_{j \in J}x^{k-1}_j, \qquad d = 3,\dots,n,\ k =1,\dots,n-1,
\]
\end{proposition}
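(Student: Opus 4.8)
The statement to prove, \mythref{supermodcond}, characterizes supermodularity of $m(\x)$ via the sign of the discrete second differences $\mk[k+1]-2\mk[k]+\mk[k-1]$, expressed through the coefficients $c_{d}$. My plan is to start from the equivalent supermodularity condition \eqref{rest0}, namely $m(\x^{k})-m(\x^{k-1}) \leq m(\x^{k+1})-m(\x^{k})$ for all $k \in \{1,\dots,n-1\}$, and rewrite it as $m(\x^{k+1})-2m(\x^{k})+m(\x^{k-1}) \ge 0$. The entire task then reduces to computing this second difference explicitly in terms of the $c_{d}$ and the coordinates of the extreme points $\x^{k}$ of $\simplex$, and showing it equals $\sum_{d=2}^{n}c_{d}(u-\ell)^{2}\vartheta_{k,d}$.

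\textbf{Key steps.} First I would substitute the extreme points $\x^{k}$, $\x^{k-1}$, $\x^{k+1}$ from \eqref{def:xk} into the definition \eqref{functionm} of $m$. Since $\mk = \sum_{d=2}^{n} c_{d} \sum_{|J|=d}\prod_{j\in J}x^{k}_{j}$, the second difference splits as a sum over $d$, and for each fixed degree $d$ I would analyze the degree-$d$ symmetric term $e_{d}(\x^{k}) := \sum_{|J|=d}\prod_{j\in J}x^{k}_{j}$, which is an elementary symmetric polynomial evaluated at a point with $k$ copies of $u$ and $n-k$ copies of $\ell$. The crucial observation is that $\x^{k-1}$, $\x^{k}$, $\x^{k+1}$ differ only in coordinates $k$ and $k+1$: moving from $\x^{k-1}$ to $\x^{k}$ flips coordinate $k$ from $\ell$ to $u$, and from $\x^{k}$ to $\x^{k+1}$ flips coordinate $k+1$ from $\ell$ to $u$. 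So I would split each product $\prod_{j\in J}x_{j}$ according to whether $J$ contains $k$, contains $k+1$, both, or neither, factoring out the common part indexed by $J\setminus\{k,k+1\} \subseteq N\setminus\{k,k+1\}$. The terms where $J$ contains neither or both of $\{k,k+1\}$ will cancel in the second difference (the ``both'' terms contribute $u^{2}-2u\ell+\ell^{2}$ type combinations, the ``neither'' cancel outright), and the cross terms organize so that the surviving contribution at degree $d$ is exactly $(u-\ell)^{2}$ times a sum of products of $d-2$ coordinates drawn from $N\setminus\{k,k+1\}$, evaluated at $\x^{k-1}$. That surviving sum is precisely $\vartheta_{k,d}$ as defined, giving $c_{d}(u-\ell)^{2}\vartheta_{k,d}$; summing over $d$ and noting the $d=2$ case yields the constant $\vartheta_{2,2}=1$ completes the identity.

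\textbf{Main obstacle.} The delicate part is the careful bookkeeping of which subsets $J$ survive in the second difference and verifying that their residual coefficient is exactly $(u-\ell)^{2}$ rather than something degree-dependent. The combinatorial identity boils down to the fact that, for the telescoping second difference in the two flipped coordinates $k$ and $k+1$, only subsets containing exactly one of these two indices contribute non-trivially, and each such subset pairs with its counterpart to produce the factor $(u-\ell)(u-\ell)$. I would organize this by writing $e_{d}$ as a sum of four pieces indexed by $J\cap\{k,k+1\} \in \{\emptyset,\{k\},\{k+1\},\{k,k+1\}\}$, expressing each piece's value at the three points $\x^{k\pm1}, \x^{k}$, and confirming the cancellation algebraically; the fact that the common factor is $\prod_{j\in J\setminus\{k,k+1\}} x^{k-1}_{j}$ with the remaining coordinates all equal to their values in $\x^{k-1}$ (since coordinates outside $\{k,k+1\}$ agree across all three points) is what lets me pull out $\vartheta_{k,d}$ cleanly. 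I would also note explicitly that indices in $J\setminus\{k,k+1\}$ range over $N\setminus\{k,k+1\}$ and that evaluation at $\x^{k-1}$ is consistent (coordinate values there being $u$ for indices $<k$ and $\ell$ for indices $>k+1$), matching the definition of $\vartheta_{k,d}$.
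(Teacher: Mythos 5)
Your overall strategy is exactly the paper's: reduce supermodularity to nonnegativity of the second difference $m(\x^{k+1})-2m(\x^{k})+m(\x^{k-1})$ via \eqref{rest0}, and evaluate that difference degree by degree by classifying each index set $J$ according to $J\cap\{k,k+1\}$, using the fact that the three extreme points agree in every coordinate outside $\{k,k+1\}$. However, your bookkeeping of which terms survive is inverted, and carried out as written the computation would not produce the stated formula. Write $P=\prod_{j\in J\setminus\{k,k+1\}}x^{k-1}_j$, which is common to all three points. A set $J$ with $J\cap\{k,k+1\}=\{k\}$ contributes $P(u-2u+\ell)=-P(u-\ell)$ to the second difference, while $J$ with $J\cap\{k,k+1\}=\{k+1\}$ contributes $P(u-2\ell+\ell)=P(u-\ell)$; pairing $J'\cup\{k\}$ with $J'\cup\{k+1\}$ over all $J'\subseteq N\setminus\{k,k+1\}$ with $|J'|=d-1$ therefore makes these ``exactly one'' terms cancel, not survive. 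The terms that do survive are precisely those with $\{k,k+1\}\subseteq J$: each contributes $P(u^{2}-2u\ell+\ell^{2})=P(u-\ell)^{2}$ with $|J\setminus\{k,k+1\}|=d-2$, and summing over such $J$ gives $(u-\ell)^{2}\vartheta_{k,d}$. Your text asserts the opposite on both counts --- that the ``both'' terms cancel and that ``only subsets containing exactly one of these two indices contribute non-trivially, and each such subset pairs with its counterpart to produce the factor $(u-\ell)(u-\ell)$'' --- and a dimension count already rules this out: a $J$ meeting $\{k,k+1\}$ in one element leaves $d-1$ residual coordinates, not the $d-2$ appearing in $\vartheta_{k,d}$.

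Once this is corrected, your argument coincides with the paper's proof, which states directly that the difference $m(\x^{k+1})-m(\x^{k})-\left(m(\x^{k})-m(\x^{k-1})\right)$ is computable in terms of only those monomials $\prod_{j\in J}x_j$ with both $k\in J$ and $k+1\in J$.
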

\begin{proof}
For each $k \in \{1, \ldots, n-1\},$ the difference $m(\x^{k+1})-m(\x^{k})-\left(m(\x^{k})-m(\x^{k-1})\right)$ is computable in terms of only those expressions $\prod_{j \in J}x_j$ within $m(\x)$ that contain both $k \in J$ and $\{k+1\} \in J.$ For $d=2,$ this difference for the $n \choose d$ expressions in $m(\x)$ of degree $d$ is given by $c_2(u-\ell)^2,$ while for each $d \in \{3, \ldots, n\},$ it is given by
\begin{equation} 
c_d(u-\ell)^2\left(\sum_{\substack{J \subseteq N-\{k,k+1\} \\ |J|=d-2}}\left(\prod_{j \in J}x^{k-1}_j\right)\right). \label{newbee}
\end{equation}
Then \eqref{rest0} is equivalent to having the sum of these expressions from 2 to $n$ being nonnegative for all $k \in \{1, \ldots, n-1\}.$ 
\end{proof}

\akshay{
A multilinear monomial $x_{1}x_{2}\cdots x_{n}$ is supermodular over the nonnegative orthant \citep[cf.][]{tawarmalani2013explicit}. Since supermodularity is preserved under taking nonnegative combinations of functions, it follows that an SMP with $c_{j}\ge 0$ for all $j$ is supermodular over $\real^{n}_{+}$. This fact for nonnegative valued SMPs also follows from our characterization.}

\begin{corollary}	\thlabel{nonnegsupermod}
$m(\x)$ is supermodular over $X$ if $\ell \ge 0$ and $c_{j} \ge 0$ for all $j=2,\dots,n$.
\end{corollary}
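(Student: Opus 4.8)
The plan is to verify the sufficient condition of \mythref{supermodcond} under the hypotheses $\ell \ge 0$ and $c_{j} \ge 0$ for all $j=2,\dots,n$. By that proposition, $m(\x)$ is supermodular over $X$ if and only if $\sum_{d=2}^{n}c_{d}(u-\ell)^{2}\vartheta_{k,d} \ge 0$ for each $k=1,\dots,n-1$. Since $u > \ell$ gives $(u-\ell)^{2} > 0$, and each $c_{d} \ge 0$ by hypothesis, it suffices to show that every quantity $\vartheta_{k,d}$ is nonnegative; then the whole sum is a nonnegative combination of nonnegative terms and the condition holds.

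First I would recall the explicit form of $\vartheta_{k,d}$ from \mythref{supermodcond}. For $d=2$ we have $\vartheta_{2,2}=1 > 0$ directly. For $d \in \{3,\dots,n\}$ we have
\[
\vartheta_{k,d} = \sum_{\substack{J \subseteq N\setminus\{k,k+1\} \\ |J|=d-2}}\,\prod_{j \in J}x^{k-1}_j,
\]
which is a sum of products of entries of the extreme point $\x^{k-1}$ of $\simplex$. Recall from \eqref{def:xk} that $\x^{k-1}$ has its first $k-1$ entries equal to $u$ and its remaining entries equal to $\ell$. Since $u > \ell \ge 0$ by hypothesis, every entry of $\x^{k-1}$ lies in $[\ell,u] \subseteq \real_{+}$, so each factor $x^{k-1}_j$ is nonnegative. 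Hence each product $\prod_{j \in J}x^{k-1}_j$ is nonnegative, and $\vartheta_{k,d}$, being a sum of such products, is nonnegative.

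Combining these observations, for every $k \in \{1,\dots,n-1\}$ we obtain $\sum_{d=2}^{n}c_{d}(u-\ell)^{2}\vartheta_{k,d} \ge 0$, which is exactly the characterization of supermodularity established in \mythref{supermodcond}. This proves that $m(\x)$ is supermodular over $X$. I do not anticipate any genuine obstacle here: the result is an immediate corollary of \mythref{supermodcond} once one observes that the nonnegativity of $\ell$ forces every entry of each $\x^{k-1}$ to be nonnegative, making the products $\prod_{j\in J}x^{k-1}_j$ and hence the sums $\vartheta_{k,d}$ nonnegative. The only point requiring a moment of care is ensuring that the $d=2$ base case ($\vartheta_{2,2}=1$) is handled separately, since the summation formula for $\vartheta_{k,d}$ is stated only for $d \ge 3$; but this case is trivially positive. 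It is worth noting that the hypothesis $\ell \ge 0$ is used precisely to rule out negative entries $\ell$ in $\x^{k-1}$, which could otherwise make an odd-cardinality product negative.
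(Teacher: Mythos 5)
Your proof is correct and follows exactly the paper's argument: invoke \mythref{supermodcond}, observe that $\ell \ge 0$ makes every entry of $\x^{k-1}$ nonnegative so each $\vartheta_{k,d}\ge 0$, and conclude the sum is nonnegative since all $c_d \ge 0$. Your treatment is slightly more detailed (explicitly handling the $d=2$ base case and noting $(u-\ell)^2>0$), but it is the same proof.
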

\begin{proof}
$\ell\ge 0$ implies that $\x^{k}_{j} \ge 0$ for all $j,k$, which implies $\vartheta_{k,d}\ge 0$ for all $k,d$. The summation in \mythref{supermodcond} is nonnegative because $c_{j}\ge 0$ for all $j$.
\end{proof}

Another consequence is that symmetric quadratic polynomials are always either submodular or supermodular, regardless of the box in $\real^{n}$.

\begin{corollary}
The symmetric quadratic polynomial $\tau\sum_{i\neq j}x_{i}x_{j}$ is either submodular or supermodular over $X$ for any $\tau\neq 0$.
\end{corollary}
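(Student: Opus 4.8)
The plan is to recognize the given polynomial as an SMP of the form~\eqref{functionm} and then invoke \mythref{supermodcond}. Writing $\sum_{i\neq j}x_{i}x_{j} = 2\sum_{i<j}x_{i}x_{j} = 2\sum_{|J|=2}\prod_{j\in J}x_{j}$, the polynomial $\tau\sum_{i\neq j}x_{i}x_{j}$ is precisely the SMP having $c_{2}=2\tau$ and $c_{d}=0$ for every $d\in\{3,\dots,n\}$. Thus it falls squarely within the scope of our supermodularity characterization.

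Next I would specialize that characterization to this coefficient vector. Since all coefficients of degree at least $3$ vanish, the sum $\sum_{d=2}^{n}c_{d}(u-\ell)^{2}\vartheta_{k,d}$ appearing in \mythref{supermodcond} collapses to its single $d=2$ term. By the computation carried out in the proof of \mythref{supermodcond}, this $d=2$ term equals $c_{2}(u-\ell)^{2}=2\tau(u-\ell)^{2}$, which is exactly the value of the second difference $m(\x^{k+1})-m(\x^{k})-\bigl(m(\x^{k})-m(\x^{k-1})\bigr)$ for every $k\in\{1,\dots,n-1\}$. The decisive observation is that this quantity is independent of both $k$ and the particular choice of box, and that its sign is governed entirely by $\tau$ because $(u-\ell)^{2}>0$ whenever $\ell<u$.

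Finally I would split on the sign of $\tau$. If $\tau>0$, then $2\tau(u-\ell)^{2}>0$ for all $k$, so \eqref{rest0} holds (strictly) and $m(\x)$ is supermodular. If $\tau<0$, then $2\tau(u-\ell)^{2}<0$ for all $k$, so every inequality in \eqref{rest0} is reversed; equivalently $-m(\x)$ satisfies \eqref{rest0}, and hence $m(\x)$ is submodular. As $\tau\neq 0$ rules out the borderline case, exactly one of the two alternatives must occur, which is the claim.

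I expect essentially no genuine obstacle here beyond bookkeeping. The only two points requiring care are extracting the $d=2$ contribution correctly from \mythref{supermodcond} (where the degree-$\ge 3$ terms $\vartheta_{k,d}$ drop out entirely), and recalling that submodularity is defined as the reversal of the inequality in \eqref{rest0}; granting these, the fact that the second difference has a single, box-independent sign immediately classifies the function.
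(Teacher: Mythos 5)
Your proof is correct, but it takes a different (and arguably more direct) route than the paper. The paper's own proof does not invoke \mythref{supermodcond} directly: it first uses a reflection argument --- since every monomial has degree $2$, negating all variables leaves $m$ unchanged while replacing the box by its reflection, so one may assume $\ell \ge 0$ --- and then applies the sufficient condition of \mythref{nonnegsupermod} to $m(\x)$ when $\tau>0$ and to $-m(\x)$ when $\tau<0$. You instead specialize the exact characterization of \mythref{supermodcond} and observe that with $c_d=0$ for $d\ge 3$ the criterion collapses to the single term $c_2(u-\ell)^2$ (via $\vartheta_{2,2}=1$), whose sign is governed by $\tau$ alone and is independent of $k$ and of the box. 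This buys you something real: you avoid the reflection step entirely, because the $d=2$ contribution does not involve the coordinates $x^{k-1}_j$ and hence needs no nonnegativity assumption on $\ell$, whereas the paper's route must first normalize the box so that \mythref{nonnegsupermod} applies. As a bonus your argument shows the super/submodularity is strict. One immaterial discrepancy: you take $c_2=2\tau$ (reading $\sum_{i\neq j}$ over ordered pairs) while the paper takes $c_2=\tau$ (unordered pairs); only the sign matters, so nothing changes.
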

\begin{proof}
A symmetric quadratic polynomial is $m(\x)$ with $c_{3} = \cdots = c_{n} = 0$, and $c_{2}=\tau$. Since all monomials are of the same degree 2, we can assume wlog that $\ell\ge 0$ because otherwise we can negate all the variables and consider the reflected box $X' = \{x'\sep -u_{j}\le x'_{j} \le -\ell_{j} \}$. Applying \mythref{nonnegsupermod} to $m(\x)$ if $\tau > 0$ or to $-m(\x)$ if $\tau < 0$ yields the desired claim. 
\end{proof}

When considering the unit hypercube in $\real^{n}$, supermodularity is attained through nonnegativity of a partial sum. Denote ${0 \choose 0}=1$.

\begin{corollary}
$m(\x)$ is supermodular over $[0,1]^{n}$ if and only if
\begin{equation}
\sum_{d=2}^{k+1}{{k-1} \choose {d-2}}c_d \geq 0, \quad \forall \; k \in \{1, \ldots, n-1\}. \label{newbee2}
\end{equation} 
\end{corollary}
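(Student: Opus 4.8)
The plan is to specialize \mythref{supermodcond} to the case $\ell=0$ and $u=1$ and then evaluate the quantities $\vartheta_{k,d}$ explicitly as binomial coefficients. On the unit hypercube we have $(u-\ell)^{2}=1$, so the condition of \mythref{supermodcond} reduces to requiring $\sum_{d=2}^{n}c_{d}\,\vartheta_{k,d}\ge 0$ for each $k\in\{1,\ldots,n-1\}$, and it remains only to compute $\vartheta_{k,d}$ when $\x^{k-1}$ is the $0/1$ vector with ones in its first $k-1$ coordinates and zeros thereafter.

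The key observation is that each factor $x^{k-1}_{j}$ equals $1$ when $j\le k-1$ and equals $0$ when $j\ge k$. Hence the product $\prod_{j\in J}x^{k-1}_{j}$ is $1$ precisely when $J\subseteq\{1,\ldots,k-1\}$ and is $0$ otherwise. Since any such $J$ automatically avoids the indices $k$ and $k+1$, the constraint $J\subseteq N\setminus\{k,k+1\}$ in the definition of $\vartheta_{k,d}$ is not binding on the nonzero terms, and so $\vartheta_{k,d}$ simply counts the subsets of $\{1,\ldots,k-1\}$ of size $d-2$. This gives $\vartheta_{k,d}=\binom{k-1}{d-2}$ for all relevant $k$ and $d$, a formula that also agrees with the base case $\vartheta_{2,2}=1$ and, via the empty-product convention at $d=2$, with the value $\vartheta_{k,2}=\binom{k-1}{0}=1$.

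Substituting this value back produces the condition $\sum_{d=2}^{n}\binom{k-1}{d-2}c_{d}\ge 0$ for each $k\in\{1,\ldots,n-1\}$. Finally I would note that $\binom{k-1}{d-2}=0$ whenever $d-2>k-1$, i.e.\ whenever $d>k+1$, so the upper summation limit can be truncated from $n$ to $k+1$ without changing the sum; for $k=1$ this leaves only the term $\binom{0}{0}c_{2}=c_{2}$, consistent with the stated convention $\binom{0}{0}=1$. The result is exactly \eqref{newbee2}. There is no substantive obstacle here, as the argument is a direct evaluation; the only point requiring care is verifying that the exclusion of $\{k,k+1\}$ from the index set for $\vartheta_{k,d}$ does not alter the count, which follows because those two indices lie outside $\{1,\ldots,k-1\}$ and therefore never appear in a nonzero term of the sum.
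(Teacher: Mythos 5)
Your proof is correct and follows the same route as the paper: the paper's proof simply says the result "follows immediately by simplifying the sum in \eqref{newbee}," and your computation of $\vartheta_{k,d}=\binom{k-1}{d-2}$ (noting that the nonzero terms correspond exactly to $J\subseteq\{1,\ldots,k-1\}$, which automatically avoids $\{k,k+1\}$) together with the truncation of the sum at $d=k+1$ is precisely that simplification carried out in detail.
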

\begin{proof}
Follows immediately by simplifying the sum in~\eqref{newbee}.
\end{proof}


More general families of SMPs are encompassed by \eqref{newbee2} including, for example, those having $c_d \geq 0$ for $d\in\{2, \ldots, n-1\}$ and $c_n \geq -\sum_{d=2}^{n-1}{{n-2} \choose {d-2}}c_d,$ which reduces to $c_2 \geq \mbox{max}\{0,(2-n)c_3\}$ for cubic functions.

\section{Monomials with Reflection Symmetry}

\akshay{We consider $m(\x)$ to be a monomial $m(\x)=c_n\prod_{j=1}^n x_j$ over the box $X$. The coefficient $c_{n}$ can be taken to be 1 since we can scale $y$ to $y/c_{n}$. Three cases arise depending on the location of the box in $\real^{n}$: (1) $\ell u = 0$, (2) $\ell u > 0$, and (3) $\ell u < 0$. After performing appropriate scalings, the first two cases are equivalent to that of $\ell = 0, u =1$ and $\ell = 1, u = r$ for some fixed $r > 1$, respectively. These two cases fall within the class of supermodular SMPs (cf.~\mythref{nonnegsupermod}), and so \mythref{result1} gives their convex hull. Indeed, it is easily verified that the resulting description of the convex hull matches the known envelopes for $\prod_{j=1}^{n} x_{j}$ over $[0,1]^{n}$ \cite{crama1993concave} and over $[1,r]^{n}$ for some fixed $r > 1$ (see \cite[Proposition 4.1]{monomerr} which is a direct consequence of results from \cite[Theorem 1]{benson2004concave} and \cite[Theorem 4.6]{tawarmalani2013explicit}). The third case $\ell u < 0$ is equivalent (upto scaling) to taking $\ell = -1$ and $u = r$ for some $r > 0$ so that $X = [-1,r]^{n}$, and it is readily verified that a monomial is neither submodular nor supermodular over $[-1,r]^{n}$. Note that since we are considering monomials, scaling means that if each variable $x_{j}$ has different lower and upper bounds $\ell_{j}$ and $u_{j}$, then as long as $u_{j} = -\ell_{j}$ we can reduce to the case $X = [-1,r]^{n}$. The convex hull for the specific subcase having $r=1$ was first established by the authors in \citep[Theorem 4.1]{monomerr}.  However, it was done so without using the symmetry of the monomial and hence did not recognize the core facets. Our main goal in this section is to independently describe this convex hull by identifying the core facets and their properties. 
}


\begin{theorem}	\thlabel{resulting3}
For $m(\x)=\prod_{j=1}^nx_j$ and $-\ell=u=1,$ there exist precisely $n+3$ core facets, and these are 
\begin{align}
&1-y \geq 0 \mbox{ and } 1+y \geq 0 \label{trivial} \\
&(n-1)+\sum_{j=1}^nx_j+(-1)^ny \geq 0, \label{pair1} \\
&(n-1)-\sum_{j=1}^nx_j+y \geq 0. \label{pair2}\\
&(n-1)-\left(\sum_{j=1}^{t+1}x_j-\sum_{j=t+2}^nx_j\right) +(-1)^{n-(t+1)}y \geq 0, \quad \forall \; t \in \{0, \ldots, n-2\}.  \label{dominate2}
\end{align}
\end{theorem}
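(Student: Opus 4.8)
The plan is to pass to the one–dimensional ``profile'' of the inequality along the simplex $\simplex$. With $u=1,\ \ell=-1$ we have $m(\x^{k})=(-1)^{n-k}$, and for any core inequality (coefficients nondecreasing) the sequence $g_{k}:={\cal L}_{k}(\beta_{0},\x[\beta])$ satisfies $g_{k}-g_{k-1}=2\beta_{k}$, so $g$ is a discretely convex sequence on $\{0,\dots,n\}$; by \mythref{foundation} validity is exactly $g_{k}\ge v_{k}$ for all $k$, where $v_{k}:=-\beta^{\prime}(-1)^{n-k}$ is an obstacle that alternates between $+1$ and $-1$. The unifying observation I would record first is that the $n+3$ listed inequalities are precisely the convex sequences $g_{k}=2|k-m|-1$ (a ``tent'' with apex at $m$, for $m=0,\dots,n$, whose sign $\beta^{\prime}=(-1)^{n-m}$ is forced by requiring $v_{m}=-1$) together with the two constant sequences $g\equiv1$ (one per $\beta^{\prime}$). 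Indeed $m=0$ gives \eqref{pair1}, $m=n$ gives \eqref{pair2}, $m=t+1\in\{1,\dots,n-1\}$ gives \eqref{dominate2}, and the constants give \eqref{trivial}; this correspondence is a bijection, so the count is $n+3$.

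Validity of all of these then follows at once: for a tent, $g_{k}-v_{k}=2\delta-1+(-1)^{\delta}\ge0$ with $\delta=|k-m|$, vanishing exactly when $\delta\in\{0,1\}$, so the tent is tight precisely at $\x^{m-1},\x^{m},\x^{m+1}$ (those in range) and the constant at every $+1$-obstacle position. For facetness I would handle the equal-coefficient inequalities \eqref{trivial}, \eqref{pair1}, \eqref{pair2} with \mythref{equalcoeff}, since each is tight at two extreme points of $\simplex$ with at least one interior. For \eqref{dominate2} (apex $0<m<n$, coefficients not all equal) \mythref{equalcoeff} does not apply, so I would exhibit $n+1$ affinely independent tight vertices directly: $\x^{m}$ and the $n$ points obtained from it by flipping a single coordinate all lie on the hyperplane, and subtracting $(\x^{m},m(\x^{m}))$ from the others yields difference vectors whose $x$-parts are $\pm2e_{i}$, $i=1,\dots,n$, which are linearly independent; hence the $n+1$ points are affinely independent and \eqref{dominate2} is a facet.

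The real work is completeness. Here I would show that every valid core inequality that is not listed is dominated, in the sense of \mythref{foundation24} or \mythref{equalcoeff7}, by a listed facet with the same $\beta^{\prime}$. Two convexity facts drive this. First, $g$ is tight at a $-1$-position for at most one index $m$ (two such tight points would force, by convexity, $g\equiv-1$ on the interval between them, violating $g\ge v$ at the intervening $+1$-position). Second, if $g$ is tight at a $-1$-position $m$, then its left slope at $m$ is $\le-2$ and its right slope is $\ge2$, so the supporting-line bound for a convex sequence gives $g_{k}\ge 2|k-m|-1$ for all $k$; thus tent-$m$ dominates $g$ pointwise in ${\cal L}$, and \mythref{foundation24} excludes $g$ unless $g$ equals tent-$m$. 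In the remaining case $g$ is tight only at $+1$-positions: if $g\ge1$ everywhere then the constant $g\equiv1$ dominates it via \mythref{foundation24} (unless $g$ is that constant); otherwise $g$ dips below $1$, and convexity forces $g$ to touch exactly two $+1$-positions $m\pm1$ with a strict valley over the $-1$-position $m$, so $g$ is tight only on $\{m-1,m+1\}\subsetneq\{m-1,m,m+1\}$.

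I expect this last subcase to be the main obstacle, because the offending inequalities (e.g.\ $g=(2,1,0,1)$ for $n=3$) are genuinely ${\cal L}$-minimal, so \mythref{foundation24} alone cannot exclude them; the argument must instead verify both hypotheses of \mythref{equalcoeff7} against tent-$m$, namely that the tight-point set is a proper subset of $\{m-1,m,m+1\}$ (shown above) and that $\bar\beta_{u}=\bar\beta_{v}$ whenever $\hat\beta_{u}=\hat\beta_{v}$ with $\bar\beta$ the tent coefficients, which holds because the strict valley makes $\hat\beta_{u}<0$ for $u\le m$ and $\hat\beta_{v}>0$ for $v>m$, precluding any equal coefficient pair of $g$ from straddling the apex. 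This certifies non-facetness and closes completeness. Throughout I assume $n\ge3$; for $n\le2$ the statement reduces to small cases of the known bilinear results, and in particular for $n=2$ the bounds \eqref{trivial} on $y$ are not facets.
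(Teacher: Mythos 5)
Your proposal is correct, and it reaches the theorem by a genuinely different organization of the completeness argument than the paper's. The paper proceeds through three lemmas (\mythref{foundation900}, \mythref{foundation15}, \mythref{resulting}) that classify core facets by the span $d=s-r$ of the set of tight simplex vertices, doing explicit algebra with the differences $D(p,q)$ to rule out $d\ge 3$ odd, force \eqref{trivial} when $d\ge4$ even, and pin down \eqref{pair1}, \eqref{pair2} when $d=1$ via \mythref{equalcoeff8}; you instead classify by whether the convex profile $g_k={\cal L}_k(\beta_0,\x[\beta])$ ever touches the $-1$ level of the alternating obstacle, and the supporting-line bound $g_k\ge 2|k-m|-1$ lets a single ``tent'' family absorb \eqref{pair1}, \eqref{pair2}, and \eqref{dominate2} at once, with \mythref{foundation24} doing the exclusion. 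This buys a shorter and more conceptual completeness proof; the paper's version is more elementary and produces the explicit coefficient computations along the way. Notably, both arguments converge on the same two load-bearing points: the $n+1$ affinely independent tight vertices (your flip argument is identical to the paper's proof of \mythref{resulting}), and the use of \mythref{equalcoeff7} against the tent to kill the valid inequalities whose valley does not reach $-1$ — the paper does exactly this inside the $d=2$ case of \mythref{foundation15}, so your identification of that subcase as the one \mythref{foundation24} cannot handle matches the paper precisely. Two small points to tighten in a write-up: in the dip subcase the profile may be tight at fewer than two $+1$-positions (handle this with \mythref{foundation22} rather than asserting it touches exactly two), and your monotonicity argument showing the sub-$1$ region is a single $-1$-position and hence that only $m\pm1$ can be tight should be stated explicitly. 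Your restriction to $n\ge3$ is warranted — for $n=2$ the inequalities \eqref{trivial} are tight only at $\x^0,\x^1,\x^2$ in a pattern excluded by \mythref{foundation22} — and is in fact a caveat the theorem statement itself omits.
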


\akshay{The general case of convexifying $\prod_{j=1}^{n}x_{j}$ over $[-1,r]^{n}$ for $r > 0, r \neq 1$ remains an open question. Before giving our proof for the above theorem, let us note the complexity of separation and comment on the structure of the proposed inequalities which includes identifying exactness of the core facets. } 

\begin{corollary}
For $m(\x)=\prod_{j=1}^nx_j$ and $-\ell=u=1,$ a point can be separated from $\conv{G}$ in $O(n\log{n})$ time.
\end{corollary}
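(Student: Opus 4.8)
The plan is to observe that, by \mythref{resulting3}, there are exactly $t = n+3$ core facets, so a direct invocation of \mythref{sepcompl} yields only the bound $O(nt + n\log n) = O(n^{2})$. To reach $O(n\log n)$ I would exploit the fact that every core facet here has a coefficient vector $\beta$ whose entries lie in $\{-1,+1\}$ (or are all zero), arranged as a single threshold: in \eqref{dominate2} with parameter $t$, the vector $\beta$ consists of $(t+1)$ entries equal to $-1$ followed by $(n-t-1)$ entries equal to $+1$, while \eqref{pair1} and \eqref{pair2} are the extreme cases with $\beta$ all $+1$ or all $-1$. This common threshold structure lets all of these facets reuse a single sorting of $\bar{x}$ and then be checked in $O(1)$ each.

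Concretely, following the convention of \mythref{sepcompl} I take $(\bar{x},\bar{y})\in X\times\real$, first sort $\bar{x}$ into nonincreasing order $\bar{x}_{(1)} \ge \cdots \ge \bar{x}_{(n)}$ in $O(n\log n)$ time, and form the prefix sums $S_k = \sum_{i=1}^{k}\bar{x}_{(i)}$, with $S_0 = 0$, in $O(n)$ time. By the rearrangement argument in the proof of \mythref{sepcompl}, the minimum of $\sum_{j}\beta_{\sigma(j)}\bar{x}_j$ over all permutations $\sigma$ is attained by pairing the $(t+1)$ coefficients equal to $-1$ with the largest entries of $\bar{x}$ and the remaining $+1$ coefficients with the smallest entries, giving the value $-S_{t+1} + (S_n - S_{t+1}) = S_n - 2S_{t+1}$. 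Hence all permuted copies of the facet \eqref{dominate2} with parameter $t$ are satisfied at $(\bar{x},\bar{y})$ precisely when $(n-1) + S_n - 2S_{t+1} + (-1)^{n-(t+1)}\bar{y} \ge 0$, an $O(1)$ test once the $S_k$ are available; \eqref{pair1} and \eqref{pair2} correspond to the endpoints $S_n - 2S_0$ and $S_n - 2S_n$ and are handled identically.

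It then remains to dispose of the facets \eqref{trivial}, which have $\beta_j = 0$ and so require no permutation: they reduce to the $O(1)$ checks $1 - \bar{y} \ge 0$ and $1 + \bar{y} \ge 0$. Summing the costs, the single sort dominates and the whole routine runs in $O(n\log n)$. The main obstacle, and the only point requiring care, is recognizing that the $n-1$ facets in the family \eqref{dominate2} need \emph{not} each incur an independent $O(n)$ evaluation as in the generic estimate of \mythref{sepcompl}: because they all employ the same nonincreasing order of $\bar{x}$ and differ only in the threshold index $t+1$, the prefix-sum precomputation collapses each per-facet permutahedron minimization to constant time, turning the $O(nt)$ term into $O(t) = O(n)$ and leaving the sort as the bottleneck.
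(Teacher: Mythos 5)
Your proof is correct and follows essentially the same approach as the paper: invoke \mythref{resulting3} and \mythref{sepcompl}, then observe that the single sort of $\bar{x}$ and the summations can be shared across all $n+3$ core facets so that each facet check costs $O(1)$. In fact your argument is more complete than the paper's, which only explicitly notes the reuse for \eqref{pair1} and \eqref{pair2}; your prefix-sum computation $S_k$ makes precise why the $n-1$ permutahedron minimizations for the family \eqref{dominate2} also collapse to constant time each.
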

\begin{proof}
\mythref{resulting3} tells us that there are $t=n+3$ core facets. \mythref{sepcompl} then implies that the complexity of separation is $O(n^{2} + n\log{n})$. In the proof of this result, the complexity $O(nt)$ came from having to take a summation of $n$ terms while separating each of the $t$ core facets. Since the summation in both \eqref{pair1} and \eqref{pair2} is the same for any permutation of variables and therefore can be stored and reused, we get that the overall complexity is $O(t+n\log{n}) = O(n\log{n})$.
\end{proof}

\subsection{Structure of the inequalities}

All facets for $\conv{G}$ are computable in terms of the core facets enumerated in \mythref{resulting3} via permutations of $\x[\beta].$ Since each of the four core facets of \eqref{trivial}, \eqref{pair1}, and \eqref{pair2} have all $\beta_j$ equal, no permutations of $\x[\beta]$ will produce additional facets. However, for each $t \in \{0, \ldots, n-2\},$ the core facet \eqref{dominate2} admits ${n} \choose {t+1}$ facets. Then \eqref{pair1}, \eqref{pair2}, and \eqref{dominate2} motivate the $2^n$ inequalities
\begin{equation}
(n-1)-\left(\sum_{j \in J}x_j-\sum_{j \in N\setminus J}x_j\right)+(-1)^{n-|J|}y \geq 0, \quad \forall \; J \subseteq N,  \label{dominate3}
\end{equation}
where \eqref{dominate3} with $J = \emptyset$ is \eqref{pair1}, where \eqref{dominate3} with $J=N$ is \eqref{pair2}, and where \eqref{dominate3} for each $J \subseteq N$ with $|J| \in \{1, \ldots, n-1\}$ are the ${n \choose |J|}$ facets obtained by permutations of $\x[\beta]$ in \eqref{dominate2} when $(t+1)=|J|.$ The desired representation of $\conv{G}$ is then the two inequalities $-1 \leq y \leq 1$ of \eqref{trivial}, the $2^n$ inequalities of \eqref{dominate3} and, for the case in which $n \geq 3,$ the $2n$ inequalities $-1 \leq x_j \leq 1 \; \forall \; j \in N.$ We can combine and more succinctly write the $(2+2^n+2n)$ inequalities of \eqref{trivial}, \eqref{dominate3}, and the restrictions $-1 \leq x_j \leq 1$ for $j \in N$ for the cases having $n \geq 3,$ by letting the variable $x_{n+1}$ denote $y$ and by letting $N^{\prime} = \{1, \ldots, n+1\}.$ Using this new definition of variables, \eqref{dominate3} can be partitioned in terms of the exponents $(n-|J|)$ as
\begin{equation}
(n-1)-\left(\sum_{j \in J}x_j-\sum_{j \in (N\setminus J) \cup \{n+1\}}x_j\right) \geq 0,\quad \forall \; J \subseteq N \mbox{ with }n-|J| \mbox{ even},  \nonumber
\end{equation}
and
\begin{equation}
(n-1)-\left(\sum_{j \in J\cup \{n+1\}}x_j-\sum_{j \in N\setminus J}x_j\right) \geq 0,\quad \forall \; J \subseteq N \mbox{ with }n-|J| \mbox{ odd}.  \nonumber
\end{equation}
Consequently, $-1 \leq x_j \leq 1$ for $j \in N,$ \eqref{trivial}, and \eqref{dominate3} can be expressed as \eqref{con258} and \eqref{con269} below, where \eqref{con258} encompasses $-1 \leq x_j \leq 1$ for $j \in N$ and \eqref{trivial}, and \eqref{con269} encompasses the above two families of inequalities.
\begin{eqnarray}
-1 \leq x_j \leq 1 \; &\forall \; j \in N^{\prime} \label{con258}\\
(n-1)-\left(\sum_{j \in J}x_j-\sum_{j \in N^{\prime}\setminus J}x_j\right) \geq 0, &\quad \forall \;J \subseteq N^{\prime} \mbox{ with } n+1-|J| \mbox{ odd} \label{con269}
\end{eqnarray}
Of course, if $n=2,$ then \eqref{con258} simplifies to $-1 \leq x_{n+1} \leq 1.$

Now we use \mythref{exact1111} and \mythref{exactly1111} to identify, for each of the $(n+3)$ core facets \eqref{trivial}, \eqref{pair1}, \eqref{pair2}, and \eqref{dominate2} of \mythref{resulting3}, the set of all points $(\x,y) \in G$ that satisfies it exactly. 

\begin{theorem}\thlabel{finalet}
For $m(\x)=\prod_{j=1}^nx_j$ and $-\ell=u=1,$ $(\tilde{\x},\tilde{y}) \in G$ satisfies the core facet
\begin{enumerate}
	\item \eqref{trivial} exactly if and only if $\tilde{\x}$ contains
	\begin{enumerate}
	\item even number of entries of value $-1$ and the remaining entries of value 1 when $\beta^{\prime}=-1,$ 
	\item odd number of entries of value $-1$ and the remaining entries of value 1 when $\beta^{\prime}=1,$  
	\end{enumerate}
	\item \eqref{pair1} exactly if and only if $\tilde{\x}$ contains at least $(n-1)$ entries of value $-1,$ 
	\item \eqref{pair2} exactly if and only if $\tilde{\x}$ contains at least $(n-1)$ entries of value $1,$
	\item \eqref{dominate2} for $t \in \{0, \ldots, n-2\}$ if and only if $\tilde{\x}$ differs from $\x^{t+1}$ in at most one entry.
\end{enumerate}
\end{theorem}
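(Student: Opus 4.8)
The plan is to apply the two exactness characterizations, \mythref{exact1111} and \mythref{exactly1111}, to each of the $(n+3)$ core facets listed in \mythref{resulting3}. The step common to all of them is to record, using $m(\x^{j}) = (-1)^{n-j}$, at which extreme points $\x^{j}$ of $\simplex$ each facet is exact. Substituting into ${\cal L}_{j}(\beta_{0},\x[\beta]) + \beta^{\prime} m(\x^{j})$ and simplifying shows that \eqref{trivial} with $\beta^{\prime}=-1$ (resp.\ $\beta^{\prime}=1$) is exact at $\x^{j}$ exactly when $n-j$ is even (resp.\ odd); that \eqref{pair1} is exact only at $\x^{0}$ and $\x^{1}$; that \eqref{pair2} is exact only at $\x^{n-1}$ and $\x^{n}$; and that \eqref{dominate2} is exact precisely at the three consecutive points $\x^{t}$, $\x^{t+1}$, $\x^{t+2}$.

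For \eqref{trivial}, \eqref{pair1} and \eqref{pair2}, which all carry equal coefficients on the $x_{j}$, I would invoke \mythref{exactly1111}. For a point $\tilde{\x}$ that is not an extreme point of $\simplex$, having $p$ entries equal to $u$ and $b$ entries equal to $\ell$, exactness reduces to requiring that $\x^{j}$ be exact for every $j$ in the consecutive block $\{p,\dots,n-b\}$, while the case $\tilde{\x}=\x^{k}$ is settled by the computation above. For \eqref{pair1} this block must lie inside $\{0,1\}$, equivalently $n-b\le 1$, i.e.\ at least $n-1$ entries equal $-1$; \eqref{pair2} is the mirror image with at least $n-1$ entries equal to $1$. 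For \eqref{trivial} the exact points $\x^{j}$ alternate in the parity of $n-j$, so a block of length at least two can never be contained in them; hence exactness forces $p+b=n$ (all entries equal $\pm1$) together with the parity condition on the number of $-1$ entries, giving the stated even/odd dichotomy.

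The substantive case is \eqref{dominate2}, whose coefficients satisfy $\beta_{j}=-1$ for $j\le t+1$ and $\beta_{j}=+1$ for $j\ge t+2$, so here I would use \mythref{exact1111}. For a non-extreme $\tilde{\x}$ the theorem yields two requirements: first, that every $\x^{j}$ with $j\in\{p,\dots,n-b\}$ be exact, which in view of the list above is equivalent to $\{p,\dots,n-b\}\subseteq\{t,t+1,t+2\}$, that is $p\ge t$ and $n-b\le t+2$; and second, that $\beta_{r}=\dots=\beta_{s}$, where $r$ and $s$ are the first and last indices at which $\tilde{\x}$ departs from $u$ and from $\ell$ respectively. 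Because the coefficient vector has its single jump between positions $t+1$ and $t+2$, this second requirement is equivalent to the clean dichotomy $s\le t+1$ or $r\ge t+2$. In the branch $s\le t+1$, coordinates $t+2,\dots,n$ all equal $\ell$, so $\tilde{\x}$ agrees with $\x^{t+1}$ there and every discrepancy lies among coordinates $1,\dots,t+1$; the bound $p\ge t$ then leaves at most one such coordinate different from $u$, hence at most one discrepancy. The branch $r\ge t+2$ is symmetric, with $n-b\le t+2$ capping the discrepancies among coordinates $t+2,\dots,n$ at one. Conversely, a point differing from $\x^{t+1}$ in a single coordinate clearly meets one of $s\le t+1$, $r\ge t+2$ together with the matching count, and the extreme points $\x^{k}$ differ from $\x^{t+1}$ in $|k-(t+1)|$ coordinates, which is at most one exactly when $k\in\{t,t+1,t+2\}$.

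I expect the only real obstacle to be this last case. The delicate point is recognizing that the equality $\beta_{r}=\dots=\beta_{s}$ coming from \mythref{exact1111} collapses, thanks to the single jump in the coefficients of \eqref{dominate2}, to the two-way split $s\le t+1$ or $r\ge t+2$, and that each branch pins $\tilde{\x}$ to $\x^{t+1}$ on one block of coordinates while the index-range condition $\{p,\dots,n-b\}\subseteq\{t,t+1,t+2\}$ caps the number of remaining discrepancies at one. The remaining facets require only parity bookkeeping and counting the length of a consecutive block of indices.
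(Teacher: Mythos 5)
Your proposal is correct and follows essentially the same route as the paper: determine at which extreme points $\x^k$ of $\simplex$ each facet is tight (the paper cites \mythref{foundation900} and \mythref{resulting} where you compute directly), then apply \mythref{exactly1111} to the equal-coefficient facets and \mythref{exact1111} to \eqref{dominate2}, with your case split $s\le t+1$ versus $r\ge t+2$ matching the paper's two applications of \mythref{exact1111} with $(p,b)=(t,n-t-1)$ and $(t+1,n-t-1)$. The only cosmetic difference is item 1, where the paper argues directly from $|\prod_j \tilde{x}_j|=1$ forcing all $|\tilde{x}_j|=1$, while you reach the same conclusion via the parity/consecutive-block argument through \mythref{exactly1111}.
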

\begin{proof}
Each statement is considered separately. 
\begin{enumerate}
\item Follows directly from the definition of $m(\x)$ when $-\ell =u=1.$ 
\item \mythref{foundation900} and its proof give us that \eqref{pair1} is satisfied exactly at $(\tilde{\x},\tilde{y})=(\x^{k},m(\x^{k}))$ for $k \in \{0,1\},$ but at no other $k.$ Then \mythref{exactly1111} with $\bar{\beta}=1$ and $\tilde{\x}$ having $p=0$ and $b=(n-1)$ gives the result.
\item \mythref{foundation900} and its proof give us that \eqref{pair2} is satisfied exactly at $(\tilde{\x},\tilde{y})=(\x^{k},m(\x^{k}))$ for $k \in \{n-1,n\},$ but at no other $k.$ Then \mythref{exactly1111} with $\bar{\beta}=-1$ and $\tilde{\x}$ having $p=(n-1)$ and $b=0$ gives the result.
\item For each $t \in \{0, \ldots, n-2\},$ \mythref{resulting} and its proof give us that the associated inequality of \eqref{dominate2} is satisfied exactly at $(\tilde{\x},\tilde{y})=(\x^{k},m(\x^{k}))$ for $k \in \{t,t+1,t+2\},$ but at no other $k.$ Then \mythref{exact1111} with $\tilde{\x}$ having $p=t$ and $b=(n-t-1)$ allows $\tilde{\x},$ with suitable $r$ and $s,$ to differ from $x^{t+1}$ in only a single entry $j$ less than or equal to $(t+1).$ Similarly, \mythref{exact1111} with $\tilde{\x}$ having $p=(t+1)$ and $b=(n-t-1)$ allows $\tilde{\x},$ with suitable $r$ and $s,$ to differ from $x^{t+1}$ in only a single entry $j$ greater than or equal to $(t+1).$ Combining these outcomes gives the result.\qedhere
\end{enumerate}
\end{proof}

\subsection{Proof of the Convex Hull}

We prove \mythref{resulting3} using three main lemmas. Let us begin with two simple observations. First, there exist no facets \eqref{sum24} with $\beta^{\prime}=0$ if $n=2,$ and that there exist $2n$ such facets of the form $-1 \leq x_j \leq 1$ for $j \in N$ if $n \geq 3.$ Second, the two inequalities of \eqref{trivial} are core facets. They are trivially valid, and they are core facets by \mythref{equalcoeff} because each has $\beta_j=0$ for all $j \in N,$ and each is satisfied exactly at two extreme points $\x^k$ of $\simplex,$ with one point not being $\x^0$ or $\x^n;$ equality occurs at $\x^{n-2}$ and $\x^n$ for the left inequality and occurs at $\x^{n-3}$ and $\x^{n-1}$ for the right inequality. 

Now, to facilitate the derivation of the remaining core facets and thereby characterize $\conv{G},$ we define, for each $(p,q),p<q,$ the value 
\begin{equation}
D(p,q)=2\sum_{j=p+1}^q\beta_j+\beta^{\prime}\left(m(\x^q)-m(\x^p)\right)
\nonumber 
\end{equation} 
that is computed by subtracting the left side of \eqref{sum24} evaluated at $(\x,y)=(\x^{p},m(\x^{p}))$ from the left side evaluated at $(\x^{q},m(\x^{q})),$ upon recalling that $-\ell=u=1.$

The following result identifies two core facets in terms of the extreme points $\x^k$ of $\simplex,$ and also eliminates, in terms of these same points, other inequalities from consideration. 

\begin{lemma}	\thlabel{foundation900}
The only core facets that are satisfied exactly at $(\x^k,\mk)$ for two consecutive extreme points $\x^k$ of the simplex $\simplex,$ and for no other such extreme points, are~\eqref{pair1} and \eqref{pair2}.
\end{lemma}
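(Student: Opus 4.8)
The plan is to collapse the problem to the equal-coefficient case via \mythref{equalcoeff8} and then run a short parity computation on a single scalar sequence. Suppose a core facet \eqref{sum24} is satisfied exactly at $(\x^k,\mk)$ and $(\x^{k+1},m(\x^{k+1}))$ for some $k\in\{0,\dots,n-1\}$ and at no other extreme point of $\simplex$; since two consecutive extreme points cannot both lie in $\{\x^0,\x^n\}$ when $n\ge 2$, the necessary condition of \mythref{foundation22} holds. In the notation of \mythref{equalcoeff8} the relevant indices are $r=k$ and $s=k+1$, so that proposition yields $\beta_1=\dots=\beta_{k+1}$ together with $\beta_{k+1}=\dots=\beta_n$, i.e. all coefficients coincide, $\beta_1=\dots=\beta_n=:\bar\beta$.

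Next I would pin down $\bar\beta$. Because $-\ell=u=1$ we have $\mk[j]=(-1)^{n-j}$, and exactness at the two consecutive points is precisely $D(k,k+1)=0$. Substituting the definition of $D$ and $\beta_{k+1}=\bar\beta$ gives $2\bar\beta+\beta'\big((-1)^{n-k-1}-(-1)^{n-k}\big)=0$, whence $\bar\beta=\beta'(-1)^{n-k}$, which lies in $\{-1,+1\}$ since $\beta'=\pm1$.

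The main step is a monotonicity argument on the scalar values $V_j:={\cal L}_j(\beta_0,\x[\beta])+\beta'\mk[j]=\beta_0+\bar\beta(2j-n)+\beta'(-1)^{n-j}$. Using $\bar\beta=\beta'(-1)^{n-k}$, the consecutive gaps compute to $V_{k+1}-V_k=0$, while $V_k-V_{k-1}=4\bar\beta$ (valid when $k\ge 1$) and $V_{k+2}-V_{k+1}=4\bar\beta$ (valid when $k\le n-2$). Since the facet is tight only at $k$ and $k+1$, validity forces $V_{k-1}>V_k=0$ and $V_{k+2}>V_{k+1}=0$; the first demands $\bar\beta<0$ and the second $\bar\beta>0$. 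These are incompatible exactly when $1\le k\le n-2$, leaving only $k=0$ and $k=n-1$.

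Finally I would resolve the two survivors. For $k=0$ only the constraint $\bar\beta>0$ applies, so $\bar\beta=1$ and $\beta'=(-1)^n$; imposing $V_0=0$ gives $\beta_0=n-1$, which is exactly \eqref{pair1}. Symmetrically, for $k=n-1$ only $\bar\beta<0$ applies, giving $\bar\beta=-1$, $\beta'=1$, and $V_n=0$ forces $\beta_0=n-1$, which is \eqref{pair2}. The reverse inclusion is immediate: \eqref{pair1} and \eqref{pair2} have all coefficients equal and are tight at exactly $\{\x^0,\x^1\}$ and $\{\x^{n-1},\x^n\}$ respectively, so they are core facets by \mythref{equalcoeff}. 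I expect the only genuine obstacle to be recognizing that \mythref{equalcoeff8} forces all coefficients equal; once that reduction is in hand, the remaining parity bookkeeping on $V_j$ is routine.
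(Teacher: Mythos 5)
Your proposal is correct and follows essentially the same route as the paper: apply \mythref{equalcoeff8} with $r=k$, $s=k+1$ to force all coefficients equal to a single $\bar\beta$, then rule out interior $k$ by the incompatible sign requirements on the consecutive differences $V_k-V_{k-1}$ and $V_{k+2}-V_{k+1}$ (the paper phrases this as $0<D(t+1,t+2)=D(t-1,t)<0$, which is the same computation), and finally pin down $\beta'$, $\bar\beta$, and $\beta_0$ at $k=0$ and $k=n-1$ to recover \eqref{pair1} and \eqref{pair2}, certified as facets via \mythref{foundation} and \mythref{equalcoeff}. The only cosmetic difference is that you solve $D(k,k+1)=0$ for $\bar\beta$ up front, whereas the paper defers that to the two boundary cases; the logical content is identical.
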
 
\begin{proof}
Suppose that a core facet is satisfied exactly at $(\x^t,m(\x^t))$ and $(\x^{t+1},m(\x^{t+1}))$ for extreme points $\x^t$ and $\x^{t+1}$ of the simplex $\simplex,$ and for no other such extreme points. Then \mythref{equalcoeff8} with $r=t$ and $s=(t+1)$ states that there exists a $\bar{\beta}$ so that $\bar{\beta}=\beta_j$ for all $j \in N.$ Three cases exist.

\begin{itemize}
\item $t \in\{1, \ldots, n-2\}.$ We have $D(t-1,t)=2\bar{\beta}+\beta^{\prime}\left(m(\x^{t+2})-m(\x^{t+1})\right) < 0$ because $m(\x^{t})=m(\x^{t+2})$ and $m(\x^{t-1})=m(\x^{t+1}),$ and we have $D(t+1,t+2)=2\bar{\beta}+\beta^{\prime}\left(m(\x^{t+2})-m(\x^{t+1})\right) > 0.$ Combining, $0 < 2\bar{\beta}+\beta^{\prime}\left(m(\x^{t+2})-m(\x^{t+1})\right)< 0,$ which is not possible.
\item
$t=0.$ We have $D(0,1)=2\bar{\beta}+\beta^{\prime}\left(2(-1)^{n+1}\right)=0,$ with the first equality holding since $\mk[0]=-m(\x^{1})=(-1)^{n}.$ Then $\bar{\beta}=\beta^{\prime}(-1)^n$ and \eqref{sum24} evaluated at $(\x,y)=(\x^0,\mk[0])$ gives $\beta_0=\beta^{\prime}\left((-1)^n(n-1)\right)$ because $\mk[0]=(-1)^{n}.$ Inequality \eqref{sum24} becomes 
\begin{equation}
\beta^{\prime}\left((-1)^n(n-1)+(-1)^n\sum_{j=1}^nx_j+y\right) \geq 0. \nonumber
\end{equation} 
This inequality is valid when $\beta^{\prime} = (-1)^n$ by the ``if" direction of \mythref{foundation} since it holds true at $(\x,y)=(\x^k,\mk)$ for $k \in \{0, \ldots, n\},$ but it is invalid when $\beta^{\prime}=(-1)^{n+1}$ since it is then violated at $(\x,y)=(\x^k,\mk)$ for $k \in \{2, \ldots, n\}.$ The inequality with $\beta^{\prime} = (-1)^n$ is \eqref{pair1}, and is a facet by the ``if" direction of \mythref{equalcoeff} since it holds exactly at $(\x,y)=(\x^k, \mk)$ for $k \in \{0,1\}.$
\item
$t=(n-1).$ We have $D(n-1,n)=2\bar{\beta}+2\beta^{\prime}=0,$ with the first equality holding since $-m(\x^{n-1})=\mk[n]=1.$  Then $\bar{\beta}=-\beta^{\prime}$ and \eqref{sum24} evaluated at $(\x,y)=(\x^{n-1},m(\x^{n-1}))$ gives $\beta_0=\beta^{\prime}(n-1)$ because $m(\x^{n-1})=-1.$ Inequality \eqref{sum24} becomes 
\begin{equation}
\beta^{\prime}\left((n-1)-\sum_{j=1}^nx_j+y\right) \geq 0. \nonumber
\end{equation} 
This inequality is valid when $\beta^{\prime}=1$ by the ``if" direction of \mythref{foundation} since it holds true at $(\x,y)=(\x^k,\mk)$ for $k \in \{0, \ldots, n\},$ but it is invalid when $\beta^{\prime}=-1$ since it is then violated at $(\x,y)=(\x^k,\mk)$ for $k \in \{0, \ldots, n-2\}.$ The inequality with $\beta^{\prime} = 1$ is \eqref{pair2}, and is a facet by the ``if" direction of \mythref{equalcoeff} since it holds exactly at $(\x,y)=(\x^{k}, \mk)$ for $k \in \{n-1,n\}.$ \qedhere
\end{itemize}
\end{proof} 

The below result builds further by providing, in terms of the extreme points $\x^k$ of $\simplex,$ necessary conditions for valid core inequalities that are not found in \eqref{trivial}, \eqref{pair1}, or \eqref{pair2} to be facets. 

\begin{lemma}	\thlabel{foundation15}
Every core facet for $\conv{G}$ which is not found in \eqref{trivial}, \eqref{pair1}, or \eqref{pair2} is satisfied exactly at $(\x^k,\mk)$ for three consecutive extreme points $\x^k$ of the simplex $\simplex,$ and for no other such extreme points.
\end{lemma}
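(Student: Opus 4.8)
The plan is to show that any core facet not among \eqref{trivial}, \eqref{pair1}, \eqref{pair2} must be tight at exactly three consecutive extreme points of $\simplex$. By \mythref{foundation22}, every core facet is satisfied exactly at at least two extreme points $\x^k$ of $\simplex$, with at least one interior point (not $\x^0$ or $\x^n$). By \mythref{foundation900}, if a core facet is tight at exactly two \emph{consecutive} extreme points and no others, it must be \eqref{pair1} or \eqref{pair2}; since we have excluded these, our facet must be tight at either two non-consecutive points, or at three or more points. The goal is to rule out everything except the ``three consecutive'' case.

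**Exploiting the reflection structure of $m$ via the quantity $D(p,q)$.**
The key arithmetic tool is the symmetry $m(\x^{k}) = (-1)^{n-k}m(\x^{0})$ induced by $-\ell=u=1$, which gives $m(\x^{k})=m(\x^{k+2})$ in absolute value and the sign-alternation $m(\x^{k+1})-m(\x^{k}) = -(m(\x^{k+2})-m(\x^{k+1}))$. I would first observe that whenever a core facet is tight at two extreme points $\x^{p},\x^{q}$ with $p<q$, then $D(p,q)=0$; and from \mythref{equalcoeff8} applied with $r$ the smallest and $s$ the largest tight index, the coefficients $\beta_{r+1}=\cdots$ are forced into blocks of equal values. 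Feeding the alternating-sign identity for $m(\x^k)$ into the definition of $D$ should show that tightness at two points whose indices differ by an even amount, or at two points separated by a gap, forces a sign contradiction in a chain of $D$ values, exactly as in the three-case split inside the proof of \mythref{foundation900}. The same $D$-telescoping that produced $0<2\bar\beta+\beta'(\cdots)<0$ there is the engine here.

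**Ruling out four or more tight points, and gaps.**
To exclude tightness at four (or more) consecutive points $\x^{t},\ldots,\x^{t+3}$, I would insert consecutive pairs into \eqref{sum24}, subtract, and use that the increments $\beta_{t+1}(u-\ell),\beta_{t+2}(u-\ell),\beta_{t+3}(u-\ell)$ must equal $-\beta'(m(\x^{k})-m(\x^{k-1}))$, which alternate in sign by the reflection identity; monotonicity $\beta_{1}\le\cdots\le\beta_n$ then collides with this alternation, forcing a contradiction. To exclude tightness at two \emph{non-consecutive} indices with all intermediate indices non-tight, I would use \mythref{foundation24}: the candidate facet would be dominated by, or coincide with, one of the already-identified inequalities, since the ${\cal L}_k$ values at the skipped indices would have to dominate. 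The main obstacle I anticipate is the careful bookkeeping needed to convert ``tight set is not three-consecutive'' into one of these contradictions uniformly — in particular handling the boundary indices $t=0$ and $t=n-1$ (where $m$ flips sign behavior) separately, much as \mythref{foundation900} split into three cases; I would expect to again isolate the endpoint cases, show they reproduce \eqref{pair1} or \eqref{pair2} (already excluded), and leave exactly the interior three-consecutive pattern surviving.
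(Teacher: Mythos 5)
Your proposal assembles the right ingredients (the quantity $D(p,q)$, the alternation $m(\x^{k+1})-m(\x^k)=-(m(\x^{k+2})-m(\x^{k+1}))$, monotonicity of $\x[\beta]$, \mythref{foundation900}, \mythref{equalcoeff8}, \mythref{foundation24}), and your exclusion of four or more \emph{consecutive} tight points is sound. But there are two genuine gaps. First, your case decomposition does not cover the whole space of tight sets: organizing by ``consecutive runs'' versus ``two isolated tight indices'' misses configurations such as a tight set $\{\x^r,\x^{r+1},\x^{r+3}\}$ or a pair $\{\x^r,\x^{r+4}\}$ with some but not all intermediate points tight. The paper instead stratifies by the span $d=s-r$ between the smallest and largest tight indices (from \mythref{equalcoeff8}) and by the parity of $d$: for $d\ge 3$ odd, $D(r,s)=0$ together with $m(\x^r)=-m(\x^s)$, validity at $\x^{r+1}$ or $\x^{s-1}$, and monotonicity yields a contradiction regardless of which intermediate points are tight; for $d\ge 4$ even, $D(r,s)=0$ and $D(r,r+2)\ge 0$ force $\beta_j=0$ for all $j$, identifying the inequality with \eqref{trivial} rather than producing a contradiction. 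Relatedly, your claim that tightness at two indices differing by an even amount ``forces a sign contradiction'' is false as stated: $d=2$ is even and is precisely the surviving case that yields \eqref{dominate2}, and \eqref{trivial} also has even span.

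Second, and more importantly, you do not address the step that actually produces ``three consecutive'': in the $d=2$ case one must show the \emph{middle} point $(\x^{r+1},\mk[r+1])$ is also tight. Your plan to invoke \mythref{foundation24} against ``one of the already-identified inequalities'' cannot work, because the needed comparator is not among \eqref{trivial}, \eqref{pair1}, \eqref{pair2}; the paper has to introduce the explicit inequality \eqref{trivia0} with $\beta_0=n-1$ and $\bar{\beta}=1$, verify its validity via \mythref{foundation} by computing its value $2|k-(r+1)|-1+(-1)^{k-(r+1)}$ at every $\x^k$, and then apply \mythref{equalcoeff7} (whose Condition~2 fails unless $\x^{r+1}$ is tight for the candidate facet). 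Without constructing and validating that comparator, the central conclusion of the lemma is unsupported.
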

\begin{proof}
\mythref{foundation22} gives us that a core facet is satisfied exactly at $(\x^k,\mk)$ for at least two extreme points of the simplex $\simplex.$ Let $r$ and $s$ be 
as in \mythref{equalcoeff8}. Let $d=(s-r) \geq 1$ denote the difference between $s$ and $r.$ \mythref{foundation900} exhausts the cases having $d=1,$ and so it is sufficient to show three results: a core facet having $d=2$ that is not of the form \eqref{trivial} must be satisfied exactly at $(\x,y)=(\x^{r+1},\mk[r+1]),$ there exists no valid inequality having $d \geq 3$ and $d$ odd, and a core facet having $d \geq 4$ and $d$ even must be of the form \eqref{trivial}. 
\begin{itemize}
\item Suppose that a core facet $\beta_0+\sum_{j=1}^n\beta_jx_j +\beta^{\prime}y \geq 0$ has $d=2$ and is not of the form \eqref{trivial}. We have $\beta_1 = \ldots = \beta_{r+1}$ and $\beta_{r+2}=\ldots=\beta_n$ by \mythref{equalcoeff8}, so that we cannot have $\beta_{r+1}=\beta_{r+2}=0$ since the facet would be of the form \eqref{trivial}. As a result, $D(r,r+2)=0$ with $\mk[r]=m(\x^{r+2})$ gives $0<-\beta_{r+1} = \beta_{r+2}=\bar{\beta}$ for some $\bar{\beta}>0.$ Then $D(r+1,r+2) \leq 0$ gives $\bar{\beta} \leq \beta^{\prime}\mk[r+1]$ because $m(\x^{r+2}) = -\mk[r+1]$ and, since $0 < \bar{\beta}$ and $\mk[r+1]=(-1)^{n-(r+1)},$ we have $0 < \bar{\beta}\leq \beta^{\prime}\mk[r+1]=\beta^{\prime}(-1)^{n-(r+1)}$ so that $\beta^{\prime}=(-1)^{n-(r+1)}.$ Consequently, the core facet takes the form 
\begin{equation} 
\beta_0-\bar{\beta}\left(\sum_{j=1}^{r+1}x_j-\sum_{j=r+2}^{n}x_j\right)+(-1)^{n-(r+1)}y \geq 0. \label{trivia0}
\end{equation} 
We now use \mythref{equalcoeff7} to show that the core facet must be satisfied exactly at $(\x,y)=(\x^{r+1},\mk[r+1])$ by setting $\beta_0 = (n-1)$ and $\bar{\beta}=1$ within \eqref{trivia0}. The resulting inequality is valid for $\conv{G}$ and is satisfied exactly at $(\x,y)=(\x^k,\mk)$ for $k \in \{r,r+1,r+2\}.$ The validity follows from \mythref{foundation} since, for any $r \in \{0, \ldots, n-2\},$ the left side is $2|k-(r+1)|-1+(-1)^{k-(r+1)}$ at $(\x,y)=\left(\x^k,\mk\right)$ for each $k \in \{0, \ldots, n\}$ because $\mk=(-1)^{n-k}.$ 

\item By contradiction, suppose there exists a valid inequality $\beta_0+\sum_{j=1}^n\beta_jx_j +\beta^{\prime}y \geq 0$ having $d \geq 3$ and $d$ odd. Then $D(r,s)=0$ gives $\sum_{j=r+1}^s\beta_j=\beta^{\prime}\mk[r]$ since $\mk[r]=-\mk[s].$ If $\beta^{\prime}\mk[r]=1$ so that $\beta^{\prime}\left(\mk[r+1]-\mk[r]\right)=-2,$ then $D(r,r+1) \geq 0$ gives $\beta_{r+1} \geq 1,$ and nondecreasing $\beta_j$ gives $\beta_{r+1} \leq \frac{1}{d}.$ Thus, a contradiction.  Similarly, if $\beta^{\prime}\mk[r]=-1$ so that $\beta^{\prime}\left(\mk[s]-m(\x^{s-1})\right)=2,$ then $D(s-1,s) \leq 0$ gives $\beta_{s} \leq -1,$ and nondecreasing $\beta_j$ gives $\beta_{s} \geq \frac{-1}{d}.$ Again a contradiction.
\item Suppose that a core facet $\beta_0+\sum_{j=1}^n\beta_jx_j +\beta^{\prime}y \geq 0$ has $d \geq 4$ and $d$ even. The value $D(r,s)=0$ gives $\sum_{j=r+1}^s \beta_j=0$ since $\mk[r]=\mk[s].$ But $D(r,r+2) \geq 0$ gives $\left(\beta_{r+1}+\beta_{r+2}\right) \geq 0$ since $\mk[r]=m(\x^{r+2}),$ and the nondecreasing $\beta_j$ gives $\beta_j=0$ for $j \in \{r+1, \ldots, s\}.$ Then $\beta_j=0$ for $j \in N$ by \mythref{equalcoeff8}, and the facet is of the form \eqref{trivial}. \qedhere
\end{itemize}
\end{proof}

A key component of the proof of \mythref{foundation15} is the family of $(n-1)$ valid core inequalities \eqref{trivia0} with $\beta_0 = (n-1)$ and $\bar{\beta}=1$ that has the property that, for each $r \in \{0, \ldots, n-2\},$ the corresponding inequality is satisfied exactly at $(\x,y)=(\x^k,\mk)$ for $k \in \{r,r+1,r+2\}.$ It turns out that each such inequality is a core facet, and that there exist no other core facets that are satisfied exactly at $(\x^k,\mk)$ for three consecutive extreme points $\x^k$ of the simplex $\simplex.$ This result is established below. Here, we find it convenient to substitute the index $t$ in \eqref{dominate2} for $r$ in \eqref{trivia0}.

\begin{lemma}	\thlabel{resulting}
There exist precisely $(n-1)$ core facets that are satisfied exactly at $(\x^k,\mk)$ for three consecutive extreme points $\x^k$ of the simplex $\simplex,$ and these inequalities are~\eqref{dominate2}.
\end{lemma}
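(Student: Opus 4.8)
The plan is to build directly on \mythref{foundation15}, whose $d=2$ analysis already does the heavy lifting: it shows that any core facet tight at exactly three consecutive extreme points $\x^{r},\x^{r+1},\x^{r+2}$ of $\simplex$ must have the form \eqref{trivia0} with $\beta'=(-1)^{n-(r+1)}$ and some $\bar\beta>0$, and that it is tight at precisely those three points. Two tasks then remain: to pin down the free constants $\beta_0$ and $\bar\beta$, and to confirm that the resulting inequality is genuinely facet-defining. After the substitution $t=r$, identifying the unique such facet with \eqref{dominate2} will both bound the count and fix the algebraic form.

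First I would determine the constants by imposing equality of \eqref{trivia0} at the two points $(\x^r,\mk[r])$ and $(\x^{r+1},\mk[r+1])$. Using $\mk[k]=(-1)^{n-k}$, these reduce to the two linear equations $\beta_0=n\bar\beta-1$ and $\beta_0=(n-2)\bar\beta+1$, whose unique solution is $\bar\beta=1$ and $\beta_0=n-1$. Hence for each $r$ at most one such core facet can exist, and it is exactly \eqref{dominate2} with $t=r$; this already caps the number of core facets tight at three consecutive extreme points of $\simplex$ at $n-1$.

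Next I would verify that each \eqref{dominate2} is in fact a facet, its validity already being recorded in \mythref{foundation15} via \mythref{foundation}. The idea is to exhibit $n+1$ affinely independent tight vertices of the box: the vertex $\x^{t+1}$ together with the $n$ vertices obtained by flipping a single coordinate of $\x^{t+1}$ in the direction permitted by the coefficient pattern, namely $\x^{t+1}-2e_i$ for $i\in\{1,\dots,t+1\}$ and $\x^{t+1}+2e_i$ for $i\in\{t+2,\dots,n\}$, where $e_i$ is the $i$th unit vector. Each such vertex has $t$, $t+1$, or $t+2$ coordinates equal to $u$, so its associated $\x^p$ is one of the three tight extreme points of $\simplex$, and the single flip keeps the equal-coefficient block $\beta_r=\dots=\beta_s$ intact; thus \mythref{foundation45} certifies tightness for the non-simplex vertices, while the three among them that are extreme points of $\simplex$ (namely $\x^t$, $\x^{t+1}$, $\x^{t+2}$) are already known to be tight. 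The differences of these vertices from $\x^{t+1}$ are the vectors $\mp2e_i$, which are linearly independent, so the $\x$-parts are affinely independent in $\real^n$; since the projection forgetting $y$ is an affine map and hence cannot create affine dependence, the lifted points $(\x,m(\x))$ are affinely independent in $\conv{G}$, proving \eqref{dominate2} is a facet.

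Finally I would tally the count: as $t$ ranges over $\{0,\dots,n-2\}$ we obtain $n-1$ facets \eqref{dominate2}, pairwise distinct since they are tight at distinct triples of consecutive extreme points of $\simplex$ (the non-overlap follows from the slack formula $2|k-(t+1)|-1+(-1)^{k-(t+1)}$ at $\x^k$ noted in \mythref{foundation15}, which vanishes only for $k\in\{t,t+1,t+2\}$), and by the first two steps these are precisely the core facets tight at three consecutive extreme points of $\simplex$. I expect the main obstacle to be the bookkeeping in the facetness step: correctly singling out the single-coordinate-flip vertices as the tight ones and confirming that exactly $n+1$ affinely independent points arise, while no stray box vertices disturb the count. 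By contrast, solving for $\beta_0,\bar\beta$ in Step~2 and checking the linear independence of the $\mp2e_i$ are routine once the correct $n+1$ points have been identified.
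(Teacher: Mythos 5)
Your proposal is correct and follows essentially the same route as the paper's proof: it invokes the $d=2$ case of \mythref{foundation15} to reduce to the form \eqref{trivia0}, pins down $\beta_0=n-1$ and $\bar{\beta}=1$ from tightness at $(\x^{t},m(\x^{t}))$ and $(\x^{t+1},m(\x^{t+1}))$, and certifies facetness with the same $n+1$ affinely independent points, namely $\x^{t+1}$ together with the $n$ single-coordinate flips whose differences are $\mp 2\boldsymbol{e}^i$, using \mythref{foundation45} for their tightness.
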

\begin{proof}
The proof consists of two parts: the first part shows that inequalities \eqref{dominate2} are the only candidate core facets that are satisfied exactly at three consecutive extreme points of the simplex $\simplex,$ and the second part shows that each such inequality is a core facet.

Suppose, for some $t \in \{0,\ldots, n-2\},$ that $\beta_0+\sum_{j=1}^n\beta_jx_j +\beta^{\prime}y \geq 0$ is a core facet that is satisfied exactly at $(\x,y)=(\x^k,\mk)$ for $k \in \{t,t+1,t+2\}.$ Then $d=2$ in the proof of \mythref{foundation15} since $d \ge 3,$ $d$ odd, was shown not possible, and $d \geq 4,$ $d$ even, was shown to yield a core facet of the form \eqref{trivial}. Clearly, the facet is not of the form \eqref{trivial} since $D(t,t+1) = 0$ gives $\beta_{t+1}=\beta^{\prime}m(\x^t) \neq 0.$ As a result, the first part of the proof of \mythref{foundation15} gives us that the facet must be of the form \eqref{trivia0}. The two equations in two unknowns $\beta_0$ and $\bar{\beta}$ that are obtained by evaluating $(\x,y)=(\x^k,\mk)$ for $k \in \{t,t+1\}$ within \eqref{trivia0} give $\beta_0=(n-1)$ and $\bar{\beta}=1$ as in \eqref{dominate2}, since $-m(\x^t)=m(\x^{t+1})=(-1)^{n-(t+1)}.$

The proof of \mythref{foundation15} showed inequalities \eqref{dominate2} to be valid for $\conv{G}$ and, for each $t \in \{0,\ldots, n-2\},$ the associated inequality to be satisfied exactly at $(\x,y)=(\x^k,\mk)$ for $k \in \{t, t+1, t+2\}.$ Thus, given such a $t,$ it is sufficient to identify $(n+1)$ affinely independent points in $\conv{G}$ that satisfy the inequality exactly. Consider the $n$ extreme points of $\conv{G},$ denoted by $(\x[p]^1,m(\x[p]^1)), \ldots, (\x[p]^n,m(\x[p]^n)),$ so that $\x[p]^i$ differs from $\x^{t+1}$ in only position $i.$ Since $\beta_1= \ldots = \beta_{t+1}$ and $(\x,y)=(\x^t,m(\x^t))$ satisfies the inequality exactly, \mythref{foundation45} gives us that $(\x,y)=(\x[p]^i,m(\x[p]^i))$ for $i \in \{1,\ldots, t+1\}$ satisfies the inequality exactly. Similarly, since $\beta_{t+2}= \ldots = \beta_n$ and $(\x,y)=(\x^{t+1},m(\x^{t+1}))$ satisfies the inequality exactly, \mythref{foundation45} gives us that $(\x,y)=(\x[p]^i,m(\x[p]^i))$ for $i \in \{t+2,\ldots, n\}$ satisfies the inequality exactly. Subtract $\x^{t+1}$ from every such point to reduce $\x[p]^i$ to $-2\boldsymbol{e}^i$ for $i \leq t+1$ and to $2\boldsymbol{e}^i$ for $i \geq t+2,$ where $\boldsymbol{e}^i$ is the unit vector in $\real^n$ having a 1 in position $i$ and $0$ elsewhere. Hence, $\x^{t+1},$ together with $\x[p]^1, \ldots, \x[p]^n,$ is an affinely independent set of points, so that $(\x^{t+1},m(\x^{t+1})),$ together with $(\x[p]^1,m(\x[p]^1)), \ldots, (\x[p]^n,m(\x[p]^n)),$  is an affinely independent set of points.
\end{proof}

\begin{proof}[\textbf{Proof of \mythref{resulting3}}]
Follows from \mythref{foundation900,foundation15,resulting}.
\end{proof}

\section{Summary and Open Questions}

This paper derives polyhedral results for the convex hull of symmetric multilinear polynomials (SMPs) taken over a box domain. \akshay{Exponential-sized extended formulations of general multilinear polynomials are available via the reformulation-linearization-technique (RLT), but symmetry and disjunctive programming enable a quadratic-sized extended formulation.} The goal of this paper is to obtain the convex hulls in the original variable spaces. Instead of adopting the tedious method of projecting the extended formulations, our approach is more elegant in the sense that we directly exploit the problem structure to define special core facets by which all facets can be characterized, and to then devise necessary and/or sufficient conditions on the coefficients of these facets. Whereas much of the theory is applicable to general SMPs over box constraints, we focus attention on two special problem classes: general supermodular (submodular) functions, and monomials having the variable lower bound equal to the negative of the upper bound. For each class, we use the necessary conditions to motivate families of core facets, and then prove that no other such facets can exist. \akshay{Our derivations of these convex hulls provides alternate proofs to those in literature.} For both classes, we use RLT results to characterize for each facet the set of all points at which the inequality is satisfied exactly.

A direction of future research is the identification of convex hull forms for more general families of SMPs than the two types within this paper, and likewise for the identification of all points within the resulting graphs that satisfy each facet exactly. \akshay{One open question in this regard is to generalize \mythref{resulting3} to monomials taken over $[-1,r]^{n}$ for arbitrary $r > 0$. Even more broadly, we do not know an explicit minimal description for convex hulls of general SMPs in the original variable space. Projecting the extended formulation of \mythref{extform} is an option, but this is likely to result in a combinatorial explosion as is usually the case when projecting from a higher-dimensional space. Another question that is open is whether similar to the two families of SMPs analysed in this paper, every SMP has a linear (or even polynomial) number of core facets. A positive answer to this question would imply a straightforward separation algorithm for the convex hull due to \mythref{sepcompl} without invoking the ellipsoid method and the optimization algorithm in \mythref{optcompl}.}

\paragraph{Acknowledgement.}
This research was initiated when the authors were in the School of Mathematical and Statistical Sciences at Clemson University, USA, during which the first two authors (YX and WA) were supported by ONR grant N00014-16-1-2168 and the third author (AG) was supported by ONR grant N00014-16-1-2725.

{
\newrefcontext[sorting=nyt]
\printbibliography[heading=bibliography]
}

\newpage
\renewcommand{\theequation}{\thesection.\arabic{equation}}
\setcounter{equation}{0}

\begin{appendices}

\section{Properties of $\conv{G}$ from RLT}	\label{sec:propG}

The key observation is that the equivalence of \eqref{miss1} between the polyhedral sets $P$ of \eqref{RLTstep2} and $\conv{T}$ of \eqref{miss1} continues to hold true with the inclusion of the restriction $y= m(\x)$  within these two sets. Consider a generalization of the set $G$ given by 
\begin{equation}
G^{\prime} \equiv\left\{(\x,y)\in \real^n \times \real: \x \in X^{\prime}, \; y= m(\x)\right\}, \label{marker}
\end{equation}
that is obtained by replacing $\x \in X$ of \eqref{Xdef} with $\x \in X^{\prime}$ of \eqref{Xprimedef}. Let
\begin{equation}
P_y\equiv\left\{(\x,\x[w],y) \in \real^n\times\real^{2^n-(n+1)}\times \real:(\x,\x[w]) \in P, \; y=\left\{m(\x)\right\}_L\right\}  \label{handy}
\end{equation}
be the set $P$ of \eqref{RLTstep2} that is modified to include the additional variable $y$ and additional restriction $y=\left\{m(\x)\right\}_L,$ and let 
\begin{equation}
T_y\equiv\left\{(\x,\x[w],y) \in \real^n\times\real^{2^n-(n+1)}\times \real:(\x,\x[w]) \in T, \; y=m(\x)\right\}  \nonumber
\end{equation}
be the set $T$ of \eqref{miss1} that is modified to include the additional variable $y$ and additional restriction $y=m(\x).$ Then we have that
\begin{equation}
P_y=conv\left(T_y \right). \label{pete}
\end{equation}
Equality \eqref{pete} holds true from \eqref{miss1} because the extreme points of $P$ and $P_y$ correspond in a one-to-one fashion in such a manner that $(\x,\x[w])$ is an extreme point of $P$ if and only if $(\x,\x[w],y)$ is an extreme point of $P_y$ having $y=\left\{m(\x)\right\}_L=m(\x).$ Therefore, every extreme point of the polytope $P_y$ is in $T_y$ and $T_y \subseteq P_y$ by construction. 

The three propositions below are consequences of \eqref{pete}, where $\mbox{Proj}_{(\x,y)}(\bullet)$ denotes the projection of the set $\bullet$ onto the space of the variables $(\x,y).$ 

\begin{proposition}	\thlabel{equalities}
$\conv{G^{\prime}} = \conv{\left(\mbox{Proj}_{(\x,y)}(T_y)\right)} = \mbox{Proj}_{(\x,y)}\left(\conv{T_y}\right) = \mbox{Proj}_{(\x,y)}\left(P_y\right)$. 
\end{proposition}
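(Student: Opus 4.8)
The plan is to establish the three equalities in the displayed chain one at a time, reading left to right. The leftmost equality will follow from a direct computation of the projection of $T_y$; the middle equality from the general fact that the convex hull commutes with a linear map; and the rightmost equality as an immediate consequence of the identity \eqref{pete} already established, namely $P_y = \conv{T_y}$. The whole argument is therefore bookkeeping built on top of \eqref{pete}.

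First I would compute $\mbox{Proj}_{(\x,y)}(T_y)$ explicitly from its definition. A point $(\x,\x[w],y)$ lies in $T_y$ precisely when $\x\in\prod_{j\in N}[L_j,U_j]$, each $w_J=\prod_{j\in J}x_j$ for $|J|\ge 2$, and $y=m(\x)$. Discarding the $\x[w]$-coordinates leaves exactly the pairs $(\x,m(\x))$ with $\x\in X^{\prime}$, which is the set $G^{\prime}$ of \eqref{marker}; conversely, every such pair lifts back into $T_y$ by setting $w_J=\prod_{j\in J}x_j$. Hence $\mbox{Proj}_{(\x,y)}(T_y)=G^{\prime}$ as sets, and taking convex hulls of both sides yields $\conv{G^{\prime}}=\conv{\left(\mbox{Proj}_{(\x,y)}(T_y)\right)}$, the leftmost equality.

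For the middle equality I would invoke the standard fact that, for any set $S$ and any linear (in particular, coordinate-projection) map $\Pi$, one has $\Pi\!\left(\conv{S}\right)=\conv{\left(\Pi(S)\right)}$, since the image of a convex combination is the convex combination of the images and vice versa. Applying this with $S=T_y$ and $\Pi=\mbox{Proj}_{(\x,y)}$ gives $\mbox{Proj}_{(\x,y)}\!\left(\conv{T_y}\right)=\conv{\left(\mbox{Proj}_{(\x,y)}(T_y)\right)}$. The rightmost equality is then immediate: substituting $P_y=\conv{T_y}$ from \eqref{pete} into $\mbox{Proj}_{(\x,y)}\!\left(\conv{T_y}\right)$ produces $\mbox{Proj}_{(\x,y)}\!\left(P_y\right)$, completing the chain.

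I do not expect a serious obstacle, precisely because the nontrivial content was absorbed into \eqref{pete}. The only step deserving explicit care is the commutation of the convex hull with the projection in the middle equality; although routine, it is worth stating because it is exactly what licenses passing between projecting the (non-convex) graph $T_y$ and projecting its convex hull $P_y$, and it is the reason the four sets in the statement coincide.
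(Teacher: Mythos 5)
Your proposal is correct and follows essentially the same route as the paper: the paper likewise observes that $G^{\prime}$ equals the projection of $T_y$, interchanges the projection and convex hull operators for the middle equality, and invokes \eqref{pete} for the last. Your version merely spells out the set-level computation of the projection in more detail, which is a harmless elaboration of the same argument.
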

\begin{proof}
The first equality follows because $G^{\prime}=\mbox{Proj}_{(\x,y)}(T_y),$ the second equality follows from interchanging the projection and convex hull operators, and the third equality follows from \eqref{pete}.
\end{proof} 

\begin{proposition}	\thlabel{equalities21}
$conv\left(G^{\prime}\right)$ is a polytope with $2^n$ extreme points that have one-to-one correspondence with the extreme points of $X^{\prime}$ in such a manner that $y=m(\x)$ at each such point $\x.$ 
\end{proposition}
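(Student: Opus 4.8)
The plan is to transport the known vertex structure of the lifted polytope $P_y$ down to $\conv{G^{\prime}}$ through the projection identities of \mythref{equalities}, reading off the count and location of vertices and then verifying that projection does not spoil extremality. By \mythref{propP} the polytope $P$ has exactly $2^n$ extreme points, and the one-to-one correspondence recorded immediately after \eqref{pete} shows that $P_y = \conv{T_y}$ is likewise a polytope with exactly $2^n$ extreme points, each of the form $(\x,\x[w],m(\x))$ with $\x$ a vertex of $X^{\prime}$. Since \mythref{equalities} gives $\conv{G^{\prime}} = \mbox{Proj}_{(\x,y)}(P_y)$, and the linear image of a polytope is a polytope, $\conv{G^{\prime}}$ is a polytope. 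Because every extreme point of a projected polytope is the image of an extreme point of the original, the extreme points of $\conv{G^{\prime}}$ form a subset of the $2^n$ projected points $\{(\bar{\x}, m(\bar{\x})) \colon \bar{\x} \mbox{ a vertex of } X^{\prime}\}$, and these are pairwise distinct since distinct vertices already differ in their $\x$-coordinates.

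It then remains to verify that each candidate $(\bar{\x}, m(\bar{\x}))$, with $\bar{\x}$ a vertex of $X^{\prime}$, is genuinely extreme; this is the crux, since projection can in general leave a projected vertex in the relative interior of a face. I would establish it by first computing the fiber of $\conv{G^{\prime}}$ over $\bar{\x}$. Note that $\mbox{Proj}_{\x}\left(\conv{G^{\prime}}\right) = \conv{\left(\mbox{Proj}_{\x}(G^{\prime})\right)} = X^{\prime}$, so $\bar{\x}$ is an extreme point of the $\x$-image. Writing any point of the fiber $\{y \colon (\bar{\x}, y) \in \conv{G^{\prime}}\}$ as a convex combination $\sum_k \mu_k (\hat{\x}^{(k)}, m(\hat{\x}^{(k)}))$ of the candidate extreme points and projecting onto $\x$ forces $\bar{\x} = \sum_k \mu_k \hat{\x}^{(k)}$; extremality of $\bar{\x}$ in $X^{\prime}$ then makes every $\hat{\x}^{(k)}$ with $\mu_k > 0$ equal to $\bar{\x}$, whence $y = m(\bar{\x})$. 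Thus the fiber over $\bar{\x}$ is the single point $\{m(\bar{\x})\}$.

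With this in hand, the extremality of each candidate is immediate: given any representation $(\bar{\x}, m(\bar{\x})) = \sum_i \lambda_i (\x^{(i)}, y^{(i)})$ by points of $\conv{G^{\prime}}$, projecting onto $\x$ and invoking extremality of $\bar{\x}$ in $X^{\prime}$ yields $\x^{(i)} = \bar{\x}$ for every $i$ with $\lambda_i > 0$, and the single-point fiber then forces $y^{(i)} = m(\bar{\x})$, so the representation is trivial. Hence $\conv{G^{\prime}}$ has exactly these $2^n$ extreme points, in one-to-one correspondence with the vertices of $X^{\prime}$ and with $y = m(\x)$ at each. The only delicate step is the fiber computation guaranteeing that projection neither merges two vertices nor demotes a projected vertex to a non-extreme point; everything else is a direct consequence of \mythref{propP}, \eqref{pete}, and \mythref{equalities}.
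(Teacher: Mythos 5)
Your proof is correct and follows essentially the same route as the paper's: both obtain the upper bound of $2^n$ extreme points from the projection of $P_y$ via \mythref{equalities}, and both then certify that each candidate point $(\bar{x},m(\bar{x}))$ with $\bar{x}$ a vertex of $X^{\prime}$ is genuinely extreme. The only difference is that the paper dismisses this last step as trivial, whereas you spell it out carefully through the single-point fiber argument over each vertex of $X^{\prime}$ — a worthwhile elaboration, but not a different approach.
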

\begin{proof}
$conv\left(G^{\prime}\right)$ is a polytope with no more than $2^n$ extreme points since it is the projection of the polytope $P_y$ having $2^n$ extreme points onto the $(\x,y)$ space, as stated in \mythref{equalities}. However, each of the $2^n$ points of $conv\left(G^{\prime}\right)$ satisfying the property that every entry of $\x$ has either $x_j=L_j$ or $x_j=U_j$ for all $j \in N,$ and that $y=m(\x),$ is trivially an extreme point of $conv\left(G^{\prime}\right).$
\end{proof} 

\mythref{equalities21} also follows from results in \citet{rikun1997convex}. As opposed to using the projection from a higher-dimensional RLT space as in the above proof, \citeauthor{rikun1997convex} shows that every $(\x,y) \in G^{\prime}$ which has some $x_j \in (L_j,U_j)$ can be expressed as a strict convex combination of two distinct points in $G^{\prime}.$

The following result addresses the validity of a linear inequality for $conv\left(G^{\prime}\right)$ in terms of the restrictions of the set $P_y.$

\begin{corollary}	\thlabel{equalities24}
A linear inequality $\beta_0+\sum_{j=1}^n\beta_jx_j+\beta^{\prime}y \geq 0$ is valid for $conv\left(G^{\prime}\right)$ if and only if it can be uniquely expressed as a linear combination of the restrictions of $P_y$ using nonnegative multipliers $\x[\pi] \in \real^{2^n}$ and the scalar $\beta^{\prime},$ so that
\begin{equation}
\beta_0+\sum_{j=1}^n\beta_jx_j+\beta^{\prime}y=\sum_{K \subseteq N}\pi_{K}F_{K}(x)+\beta^{\prime}\left(y-\left\{m(\x)\right\}_L\right). \label{ruby}
\end{equation}
\end{corollary}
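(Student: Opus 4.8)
The plan is to produce the identity \eqref{ruby} directly from the basis property of the RLT functions in \mythref{obs1}, and then to read off nonnegativity of the multipliers from validity via \mythref{obs2} and \mythref{equalities21}. Set $p(\x) := \beta_0 + \sum_{j=1}^n \beta_j x_j + \beta' m(\x)$, which is a multilinear polynomial of degree at most $n$. First I would invoke \mythref{obs1} to obtain the unique $\x[\pi] \in \real^{2^n}$ with $p(\x) = \sum_{K \subseteq N} \pi_K F_K(\x)$, and record the explicit value $\pi_K = p(\hat{\x})/F_K(\hat{\x})$, where $\hat{\x}$ is the vertex of $X^{\prime}$ of \eqref{Xprimedef} with $\hat{x}_j = U_j$ on $K$ and $\hat{x}_j = L_j$ off $K$. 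A short computation from \eqref{Fun} gives $F_K(\hat{\x}) = \prod_{j \in N} d_j = D_n > 0$, so the sign of $\pi_K$ agrees with that of $p(\hat{\x})$.

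Next I would verify that \eqref{ruby} is a genuine identity in the lifted variables $(\x,\x[w],y)$, not merely after the substitution $w_J = \prod_{j \in J} x_j$. Since the linearization operator $\{\cdot\}_L$ is linear and acts as the identity on the constant and linear terms, applying it to $p(\x) = \sum_K \pi_K F_K(\x)$ yields $\beta_0 + \sum_j \beta_j x_j + \beta' \{m(\x)\}_L = \sum_K \pi_K F_K(\x)$ as linear functions of $(\x,\x[w])$; adding $\beta' y$ and rearranging produces exactly \eqref{ruby}. This bookkeeping is the one step that needs care, and I expect it to be the main obstacle: one must keep the polynomial forms $F_K(\x)$ and $m(\x)$ separated from their linearized counterparts, and use that the linearized functions $\{F_K(\x)\}_L$ inherit linear independence from the $F_K(\x)$ so that, once $\beta'$ (the coefficient of $y$) is fixed, the representation is unique. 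This last point delivers the uniqueness claim of the corollary immediately.

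For the converse-free ($\Leftarrow$) direction I would argue that if \eqref{ruby} holds with $\x[\pi] \ge 0$, then on $P_y$ of \eqref{handy} every restriction $F_K(\x) \ge 0$ is satisfied, while the defining equation $y = \{m(\x)\}_L$ makes the term $\beta'(y - \{m(\x)\}_L)$ vanish; hence the right-hand side of \eqref{ruby}, and therefore $\beta_0 + \sum_j \beta_j x_j + \beta' y$, is nonnegative on $P_y$. Since the inequality depends only on $(\x,y)$ and $\conv{G^{\prime}} = \mbox{Proj}_{(\x,y)}(P_y)$ by \mythref{equalities}, it follows that the inequality is valid for $\conv{G^{\prime}}$.

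For the ($\Rightarrow$) direction I would use that validity for $\conv{G^{\prime}}$ forces the inequality to hold at every extreme point, which by \mythref{equalities21} has the form $(\hat{\x}, m(\hat{\x}))$ for a vertex $\hat{\x}$ of $X^{\prime}$; there $y = m(\hat{\x})$, so the inequality reads $p(\hat{\x}) \ge 0$. By \mythref{obs2} this is equivalent to $p \ge 0$ on all of $X^{\prime}$, and in either case the explicit formula from \mythref{obs1} gives $\pi_K = p(\hat{\x})/D_n \ge 0$ for every $K$. Together with the identity of the second paragraph and its uniqueness, this completes the equivalence.
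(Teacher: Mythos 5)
Your proof is correct, and it differs from the paper's in a way worth noting. The paper's proof is a two-sentence appeal to \mythref{equalities}: since $\conv{G^{\prime}} = \mbox{Proj}_{(\x,y)}(P_y)$, an inequality in $(\x,y)$ is valid for $\conv{G^{\prime}}$ iff it is valid for $P_y$, and a valid inequality for the explicitly described polyhedron $P_y$ is (implicitly, by LP duality/Farkas applied to the listed restrictions) a nonnegative combination of them; uniqueness then follows from the invertibility of the Kronecker product matrix in \eqref{ca4}. You share the easy direction (nonnegative combination of restrictions of $P_y$ implies validity on $P_y$, hence on its projection) and the source of uniqueness (linear independence of the $F_{K}$, equivalently of their linearized forms), but for the harder direction you bypass duality entirely: you build the multipliers explicitly as $\pi_K = p(\hat{\x})/D_n$ via \mythref{obs1}, and read off their nonnegativity from validity at the extreme points $(\hat{\x}, m(\hat{\x}))$ of $\conv{G^{\prime}}$ supplied by \mythref{equalities21}. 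This buys you a self-contained, constructive argument that also makes the value of each multiplier visible (which is in the spirit of how the paper later uses \eqref{newbeed} in \mythref{fitt}), at the cost of being longer than the paper's polarity-style shortcut. Your care in distinguishing the polynomial identity $p(\x)=\sum_K \pi_K F_K(\x)$ from its linearized counterpart in the $(\x,\x[w],y)$ variables is exactly the bookkeeping the paper glosses over, and it is handled correctly.
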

\begin{proof}
The existence of nonnegative multipliers $\x[\pi]$ satisfying \eqref{ruby} follows from the result of \mythref{equalities} that $\conv{G^{\prime}}\mbox{Proj}_{(\x,y)}\left(P_y\right),$ as then a linear inequality will be valid for $\conv{G^{\prime}}$ if and only if it is valid for $P_y.$ The uniqueness follows from the invertibility of the matrix $\scriptsize\left[\begin{array}{cc} U_1 &  -1 \\ -L_1 & 1 \end{array}\right] \normalsize \otimes \ldots \otimes \scriptsize \left[\begin{array}{cc} U_n &  -1 \\ -L_n & 1 \end{array}\right]$ of \eqref{ca4}.
\end{proof}

\section{Illustrative Examples}

\subsection{Use of Kronecker Products}

The example below illustrates the use of Kronecker products to establish the two stated properties of $P,$ to demonstrate \eqref{miss1}, and to provide insights into the two observations.
\begin{example}	\label{ex1}
Consider $X^{\prime}$ of \eqref{Xprimedef} with $N=\{1,2,3\}$ in the $n=3$ variables $x_1, x_2,$ and $x_3.$ The $2^3=8$ inequalities of \eqref{RLTstep} are expressed in \eqref{prodkron2} as 
\begin{eqnarray} \scriptsize
\left[\begin{array}{rr|rr|rr|rr}   
U_1U_2U_3 & -U_1U_2 & -U_1U_3 & U_1 & -U_2U_3 & U_2 & U_3 & -1 \\  
-U_1U_2L_3 & U_1U_2 & U_1L_3 & -U_1 & U_2L_3 & -U_2 & -L_3 & 1 \\ \cline {1-8}
-U_1L_2U_3 & U_1L_2 & U_1U_3 & -U_1 & L_2U_3 & -L_2 & -U_3 & 1 \\  
U_1L_2L_3 & -U_1L_2 & -U_1L_3 & U_1 & -L_2L_3 & L_2 & L_3 & -1 \\ \cline {1-8}
-L_1U_2U_3 & L_1U_2 & L_1U_3 & -L_1 & U_2U_3 & -U_2 & -U_3 & 1 \\  
L_1U_2L_3 & -L_1U_2 & -L_1L_3 & L_1 & -U_2L_3 & U_2 & L_3 & -1 \\ \cline {1-8}
L_1L_2U_3 & -L_1L_2 & -L_1U_3 & L_1 & -L_2U_3 & L_2 & U_3 & -1 \\  
-L_1L_2L_3 & L_1L_2 & L_1L_3 & -L_1 & L_2L_3 & -L_2 & -L_3 & 1 \\
\end{array}\right]\left(\begin{array}{cccccccc} 1 \\ x_3 \\ \cline {1-1} x_2 \\ x_2x_3 \\ \cline {1-1} x_1 \\ x_1x_3 \\ \cline {1-1} x_1x_2 \\ x_1x_2x_3\\ \end{array} \right) \geq \left(\begin{array}{cccccccc} 0 \\ 0 \\  \cline {1-1} 0 \\ 0 \\  \cline {1-1} 0 \\ 0 \\  \cline {1-1} 0 \\ 0\\ \end{array} \right). \label{larger}
\end{eqnarray} 
The equations of \eqref{ca6} in nonnegative variables $\boldsymbol{\lambda}$ take the form
\begin{eqnarray} \tiny
\left(\begin{array}{cccccccc} 1 \\ x_3 \\ \cline {1-1} x_2 \\ w_{23} \\ \cline {1-1} x_1 \\ w_{13} \\ \cline {1-1} w_{12} \\ w_{123}\\ \end{array} \right) =  \frac{1}{d_1d_2d_3} \left[\begin{array}{rr|rr|rr|rr}   
1 & 1 & 1 & 1 & 1 & 1 & 1 & 1 \\  
L_3 & U_3 & L_3 & U_3 & L_3 & U_3 & L_3 & U_3 \\ \cline {1-8}
L_2 & L_2 & U_2 & U_2 & L_2 & L_2 & U_2 & U_2 \\  
L_2L_3 & L_2U_3 & U_2L_3 & U_2U_3 & L_2L_3 & L_2U_3 & U_2L_3 & U_2U_3 \\ \cline {1-8}
L_1 & L_1 & L_1 & L_1 & U_1 & U_1 & U_1 & U_1 \\  
L_1L_3 & L_1U_3 & L_1L_3 & L_1U_3 & U_1L_3 & U_1U_3 & U_1L_3 & U_1U_3 \\ \cline {1-8}
L_1L_2 & L_1L_2 & L_1U_2 & L_1U_2 & U_1L_2 & U_1L_2 & U_1U_2 & U_1U_2 \\  
L_1L_2L_3 & L_1L_2U_3 & L_1U_2L_3 & L_1U_2U_3 & U_1L_2L_3 & U_1L_2U_3 & U_1U_2L_3 & U_1U_2U_3 \\
\end{array}\right]\left(\begin{array}{cccccccc} \lambda_1 \\ \lambda_2 \\ \cline {1-1} \lambda_3 \\ \lambda_4 \\ \cline {1-1} \lambda_5 \\ \lambda_6 \\ \cline {1-1} \lambda_7 \\ \lambda_8\\ \end{array} \right), \label{sume}
\end{eqnarray}
where the RLT linearization step sets $w_{12}=x_1x_2,$ $w_{13}=x_1x_3,$ $w_{23}=x_2x_3,$ and $w_{123}=x_1x_2x_3.$ 

There are eight extreme points to this system, with extreme point $j$ having $\lambda_j=d_1d_2d_3$ and $\lambda_i=0$ for $i \neq j.$ Then the eight extreme points to $P$ of \eqref{RLTstep2} and \eqref{ca4} are given by the eight columns of the above $8 \times 8$ matrix, less the first row. Consequently, we have that $P=conv(T)$ with 
\begin{equation}
T=\tiny\left\{ \left(\begin{array}{c} x_1 \\  x_2 \\ x_3 \\  w_{12} \\ w_{13} \\ w_{23} \\ w_{123}\\ \end{array} \right):  L_j \leq x_j \leq U_j \; \forall \; j=1,2,3, \; \left(\begin{array}{c} x_1x_2 \\ x_1x_3 \\ x_2x_3 \\ x_1x_2x_3 \\ \end{array} \right)=\left(\begin{array}{c} w_{12} \\ w_{13} \\ w_{23} \\ w_{123} \\ \end{array} \right)\right\}, \nonumber
\end{equation}
as in \eqref{miss1}.

Now consider any multilinear polynomial $\sum_{J \subseteq N}\alpha_J\prod_{j \in J}x_j$ as found in \mythref{obs1,obs2}, and the corresponding system in variables $\boldsymbol{\pi}\in \real^{2^n}.$ Using obvious notation, denote \eqref{larger} by $A\boldsymbol{v} \geq \boldsymbol{0}$ so that the (scaled) $8 \times 8$ matrix of \eqref{sume} is $A^{-1}.$ Observe that the eight inequalities of \eqref{larger} correspond, in order, to the functions $F(K)$ of \eqref{Fun} having $K= \emptyset, \{3\}, \{2\}, \{2,3\}, \{1\}, \{1,3\}, \{1,2\}, \{1,2,3\}.$ (This order coincides with the variable indices of the products within the vector $\boldsymbol{v}.$) Accordingly define $\boldsymbol{\alpha}^T=(\alpha_{\emptyset},\alpha_3,\alpha_2,\alpha_{23},\alpha_1,\alpha_{13},\alpha_{12},\alpha_{123}),$ and $\boldsymbol{\pi} \in \real^{8}$ by $\boldsymbol{\pi}^T=(\pi_{\emptyset},\pi_3,\pi_2,\pi_{23},\pi_1,\pi_{13},\pi_{12},\pi_{123})$ so that $\boldsymbol{\pi}^T=\boldsymbol{\alpha}^TA^{-1}.$ Then we have
\begin{equation}
\sum_{J \subseteq N} \alpha_J\prod_{j \in J}x_j=\boldsymbol{\alpha}^T\boldsymbol{v}=(\boldsymbol{\alpha}^TA^{-1})(A\boldsymbol{v})=\boldsymbol{\pi}^T(A\boldsymbol{v}) =\sum_{K \subseteq N}\pi_KF(K), \label{includee}
\end{equation}
where the four equalities follow, from left to right, from the definitions of $\boldsymbol{\alpha}$ and $\boldsymbol{v},$ the multiplicative inverse of $A,$ the definition of $\boldsymbol{\pi},$ and the stated equivalence between the vector $A\boldsymbol{v}$ and the functions $F(K)$ of \eqref{Fun}. The computing of the multipliers $\pi_K$ by \mythref{obs2} follows from \eqref{includee} since each entry of the vector $\boldsymbol{\alpha}^TA^{-1}$ corresponds to a distinct $K \subseteq N,$ and realizes value $\frac{1}{D_3}(\sum_{J \subseteq N} \alpha_J\prod_{j \in J}\hat{x}_j),$ where $\hat{x}_j=U_j$ for all $j \in K$ and $\hat{x}_j=L_j$ for all $j \notin K.$ For each such $K,$ this value is the multiplier $\pi_K$ on the associated function $F(K).$ \mythref{obs2} gives us, provided it is nonnegative over the extreme points of $X^{\prime},$ that the polynomial $\sum_{J \subseteq N}\alpha_J\prod_{j \in J}x_j$ vanishes at a point $\hat{\x} \in X^{\prime}$ if and only if $\pi_KF(K)=0$ for all $K \subseteq N,$ where each $F(K)$ is evaluated at $\hat{\x}.$ \hfill$\diamond$
\end{example}

\subsection{Convex Hull via Projection}

The example below illustrates the computation of the set $conv\left(G^{\prime}\right)$ via the projection from the extended variable space $(\x,\boldsymbol{w},y)$ of $P_y$ onto the space of the variables $(\x,y),$ as stated in \mythref{equalities}. For simplicity, the sets $X^{\prime}$ and $G^{\prime}$ of \eqref{Xprimedef} and \eqref{marker} are reduced to the sets $X$ and $G$, respectively, by setting $L_j=\ell$ and $U_j =u$ for all $j \in N.$ The example also demonstrates the result of \mythref{obs2} that allows for the identification of those points within $X$ at which each facet is satisfied exactly. Example~\ref{ex2} builds upon Example~\ref{ex1}, and will be later referenced.

\begin{example}	\label{ex2}
Consider the set $G$ with $N=\{1,2,3\}$ in the $n=3$ variables $x_1, x_2,x_3,$ and the variable $y,$ with $y=m(\x)=x_1x_2x_3,$. The set $P_y$ of \eqref{handy}, whose projection onto the $(\x,y)$ variable space gives $\conv{G}$ as stated in \mythref{equalities}, is expressed in matrix form below. The matrix partitioning is used to emphasize that the first eight restrictions are inequalities and the last restriction is equality.
\begin{align}
&\scriptsize\left[\begin{array}{rr|rr|rr|rr|r}   
u^3 & -u^2 & -u^2 & u & -u^2 & u & u & -1 & 0 \\  
-\ell u^2 & u^2 & \ell u & -u & \ell u & -u & -\ell & 1 & 0 \\ \cline {1-9}
-\ell u^2 & \ell u & u^2 & -u & \ell u & -\ell & -u & 1 & 0 \\  
\ell^2 u & -\ell u & -\ell u & u & -\ell^2 & \ell & \ell & -1 & 0 \\ \cline {1-9}
-\ell u^2 & \ell u & \ell u & -\ell & u^2 & -u & -u & 1 & 0 \\  
\ell^2 u & -\ell u & -\ell^2 & \ell & -\ell u & u & \ell & -1 & 0 \\ \cline {1-9}
\ell^2 u & -\ell^2 & -\ell u & \ell & -\ell u & \ell & u & -1 & 0 \\  
-\ell^3 & \ell^2 & \ell^2 & -\ell & \ell^2 & -\ell & -\ell & 1 & 0 \\ 
\end{array}\right]\left(\begin{array}{cccccccc} 1 \\ x_3 \\ \cline {1-1} x_2 \\ w_{23} \\ \cline {1-1} x_1 \\ w_{13} \\ \cline {1-1} w_{12} \\ w_{123} \\  \end{array} \right) \geq \left(\begin{array}{cccccccc} 0 \\ 0 \\  \cline {1-1} 0 \\ 0 \\  \cline {1-1} 0 \\ 0 \\  \cline {1-1} 0 \\ 0 \\  \end{array} \right)\nonumber  \\
&\scriptsize\phantom{|}\left[\begin{array}{rr|rr|rr|rr|r}   
\phantom{\ell u^2|}0 & \phantom{-\ell |}0 & \phantom{\ell u}0 & \phantom{-|}0\phantom{l} & \phantom{\ell u}0 & \phantom{-\ell}0 & \phantom{a|}0 & -1 &  1\\ 
\end{array}\right]\phantom{|}\left(\begin{array}{cccccccc}  \phantom{aa}y\phantom{aa} \\ \end{array} \right) \phantom{|}=\phantom{|} \left(\begin{array}{cccccccc} \phantom{|}0\phantom{|} \\  \end{array} \right) \label{largeee}
\end{align}

\noindent The eight inequalities are the linearized form of \eqref{larger} when $L_1,$ $L_2,$ and $L_3$ are set to $\ell,$ and when $U_1,$ $U_2,$ and $U_3$ are set to $u$, while the equation is $-\{m(\x)\}_L+y=0.$ Since we desire to project \eqref{largeee} onto the space of the variables $(x_1,x_2,x_3,y),$ the projection cone takes the form
\begin{eqnarray}  
\scriptsize
\left[\begin{array}{rrrrrrrrr}   
u & -u & -u & u & -\ell & \ell & \ell & -\ell & 0\\  
u & -u & -\ell & \ell & -u & u & \ell & -\ell & 0\\
u & -\ell & -u & \ell & -u & \ell & u & -\ell & 0\\
-1 & 1 & 1 & -1 & 1 & -1 & -1 & 1 & -1\\
\end{array}\right]\left(\begin{array}{cccccccc} \pi_{\emptyset} \\ \pi_3 \\ \pi_2 \\ \pi_{23} \\ \pi_1 \\ \pi_{13} \\ \pi_{12} \\ \pi_{123} \\ \beta^{\prime} \\ \end{array} \right) = \left(\begin{array}{ccc} 0 \\ 0 \\  0\\ 0\\ \end{array} \right), \label{largend}
\end{eqnarray}
where $\boldsymbol{\pi}$ are the nonnegative multipliers on the eight inequality restrictions of \eqref{largeee}, and where $\beta^{\prime}$ is the multiplier on the equation. When $\ell > 0,$ there are fifteen extreme directions to this cone, with six ``trivial directions" resulting in the six facets $x_j \geq \ell$ and $-x_j \geq -u$ for $j \in \{1,2,3\}.$ Specifically, setting $\beta^{\prime}=0$ for each $j,$ the ``trivial direction" having $\pi_K=1$ if $ j \in K$ and $\pi_K=0$ otherwise gives $x_j- \ell \geq 0,$ and the ``trivial direction" having $\pi_K=1$ if $j \in (N-K)$ and $\pi_K=0$ otherwise gives $u-x_j \geq 0,$ all inequalities scaled by $\frac{1}{(u-\ell)^2}.$ (Clearly, for each $j \in \{1,2,3\},$ the inequality $x_j - \ell \geq 0$ is satisfied exactly at all points $(x_1,x_2,x_3,y) \in G$ having $x_j = \ell,$ and the inequality $u-x_j \geq 0$ is satisfied exactly at all points $(x_1,x_2,x_3,y) \in G$ having $x_j = u.$) Each of the remaining nine directions is depicted as a column of the below matrix.
\begin{eqnarray} 
\scriptsize
\left[\begin{array}{ccccccccc}   
0 & 0 & 0 & 0 & 0 & 0 & 0 & \ell & \ell+2u\\  
\ell+u & \ell & \ell+u & \ell & 0 & 0 & 0 & 0 & u\\
\ell & \ell+u & 0 & 0 & \ell+u & \ell & 0 & 0 & u\\
\ell+u & \ell+u & u & 0 & u & 0 & \ell & 0 & 0\\
0 & 0 & \ell & \ell+u & \ell & \ell+u & 0 & 0 & u\\
u & 0 & \ell+u & \ell+u & 0 & u & \ell & 0 & 0\\
0 & u & 0 & u & \ell+u & \ell+u & \ell & 0 & 0\\
0 & 0 & 0 & 0 & 0 & 0 & 2\ell+u & u & 0\\
\ell-u & \ell-u & \ell-u & \ell-u & \ell-u & \ell-u & -\ell+u & -\ell+u & -\ell+u\\
\end{array}\right] \nonumber
\end{eqnarray}
The set $\conv{G}$ is then defined in terms of the $x_j - \ell \geq 0$ and $u-x_j \geq 0$ restrictions for $j \in \{1,2,3\},$ together with the nine facets that are listed in the first column of the below table, upon dividing each inequality by $(u-\ell).$

\begin{table}
\begin{center}
\caption{Facet and Points in $G$ where Satisfied Exactly when $\ell >0$}
\label{tab1}
\begin{tabular}{|c | c c c |}
\hline
\vspace{-.1 in}
& & & \\
\vspace{-.1 in}
Facet &  \multicolumn{3}{c} {Points in $G$ where Satisfied Exactly}  \\ 
& & & \\
\hline
$(\ell^2)x_1+(\ell u)x_2+(u^2)x_3-y \geq \ell u(\ell+u)$     &$(x_1,\ell,\ell)$   &$(u,x_2,\ell)$    &$(u,u,x_3)$        \\ 
$(\ell^2)x_1+(u^2)x_2+(\ell u)x_3-y \geq \ell u(\ell+u)$     &$(x_1,\ell,\ell)$   &$(u,x_2,u)$    &$(u,\ell,x_3)$       \\
$(\ell u)x_1+(\ell^2)x_2+(u^2)x_3-y \geq \ell u(\ell+u)$     &$(x_1,u,\ell)$   &$(\ell,x_2,\ell)$    & $(u,u,x_3)$      \\
$(u^2)x_1+(\ell^2)x_2+(\ell u)x_3-y \geq \ell u(\ell+u)$     &$(x_1,u,u)$   &$(\ell,x_2,\ell)$    & $(\ell,u,x_3)$     \\
$(\ell u)x_1+(u^2)x_2+(\ell^2)x_3-y \geq \ell u(\ell+u)$     &$(x_1,\ell,u)$   &$(u,x_2,u)$    & $(\ell,\ell,x_3)$       \\
$(u^2)x_1+(\ell u)x_2+(\ell^2)x_3-y \geq \ell u(\ell+u)$     &$(x_1,u,u)$   &$(\ell,x_2,u)$    & $(\ell,\ell,x_3)$      \\
$-(\ell^2)x_1-(\ell^2)x_2-(\ell^2)x_3+y \geq -2\ell^3$     &$(x_1,\ell,\ell)$   &$(\ell,x_2,\ell)$    & $(\ell,\ell,x_3)$      \\
$-(\ell u)x_1-(\ell u)x_2-(\ell u)x_3+y \geq -\ell u(\ell+u)$     & \multicolumn{3}{c|} {Any $x_i=\ell,$ any $x_j=u, \; i \neq j.$}  \\
$-(u^2)x_1-(u^2)x_2-(u^2)x_3+y \geq -2u^3$   &$(x_1,u,u)$   &$(u,x_2,u)$    & $(u,u,x_3)$    \\
\hline
\end{tabular}
\end{center}
\end{table}

By letting $y=m(\x)$ within each facet of Table~\ref{tab1}, \mythref{obs2} allows us to identify, in terms of the positive multipliers $\boldsymbol{\pi}$ of \eqref{largend}, the points in $G$ where each such inequality is satisfied exactly. Definition \eqref{Fun} gives us, for this example,  that a function $F(K)$ vanishes at a point $\hat{\x} \in X$ if and only if either $\hat{x}_j=\ell$ for some $j \in K$ or $\hat{x}_j=u$ for some $j \notin K.$ The second column of Table~\ref{tab1}, which then logically follows, lists the set of all points $(x_1,x_2,x_3,y) \in G$ where each facet is satisfied exactly, with the value $y=x_1x_2x_3$ suppressed for simplicity. The notation $``x_j"$ found within this table indicates that the variable $x_j$ can have $\ell \leq x_j \leq u.$ Observe that, given any realization of $(x_1,x_2,x_3)$ wherein two of the variables have values at either their lower or upper bounds, the first six facets enforce $y \leq x_1x_2x_3$ while the last three facets enforce $y \geq x_1x_2x_3,$ ensuring that $y=x_1x_2x_3.$

For the case when $\ell=0,$ each of the first eight facets of Table~\ref{tab1} is twice listed by repetition, and the inequalities $x_j \geq 0$ for $j \in \{1,2,3\}$ are not facets. A statement of the five facets resulting from Table~\ref{tab1} and those points $(x_1,x_2,x_3,y) \in G,$ with $y=x_1x_2x_3$ suppressed for simplicity, where each facet is satisfied exactly is given in Table~\ref{tab2}. Here, the notation $``x_j"$ indicates that the variable $x_j$ can have $0 \leq x_j \leq u.$ \hfill$\diamond$

\begin{table}
\begin{center}
\caption{Facet and Points in $G$ where Satisfied Exactly when $\ell =0$}
\label{tab2}
\begin{tabular}{|c | c c c |}
\hline
\vspace{-.1 in}
& & & \\
\vspace{-.1 in}
Facet &  \multicolumn{3}{c} {Points in $G$ where Satisfied Exactly}  \\ 
& & & \\
\hline
$(u^2)x_3-y \geq 0$     &$(x_1,x_2,0)$   & $(u,u,x_3)$   &     \\ 
$(u^2)x_2-y \geq 0$     &$(x_1,0,x_3)$   &  $(u,x_2,u)$ &      \\
$(u^2)x_1-y \geq 0$     &$(0,x_2,x_3)$   &   $(x_1,u,u)$ &     \\
$y \geq 0$     & \multicolumn{3}{c|} {Any $x_i=0$}   \\
$-(u^2)x_1-(u^2)x_2-(u^2)x_3+y \geq -2u^3$   &$(x_1,u,u)$   &$(u,x_2,u)$    & $(u,u,x_3)$    \\
\hline
\end{tabular}
\end{center}
\end{table}

\end{example}

\subsection{Convex Hull for Supermodular Function}

\begin{example} \label{ex3}
Consider $G$ in the $n=3$ variables $x_1, x_2,x_3,$ and the variable $y,$ with $m(\x)=x_1x_2x_3$ as in Example~\ref{ex2}, and with $\ell \geq 0$. Inequalities \eqref{rest0} hold true because $(m(\x^k)-m(\x^{k-1}))$ takes values $\ell^2(u-\ell),$ $(\ell u)(u-\ell),$ and $u^2(u-\ell)$ when $k=1,$ $2,$ and $3,$ respectively. Then \mythref{result1} is applicable, and core facet \eqref{res1} is
\begin{equation}
-\ell u(\ell +u)+(\ell^2)x_1+(\ell u)x_2+(u^2)x_3-y \geq 0, \label{testing1}
\end{equation}
and core facets \eqref{res2} are
\begin{eqnarray}
-\ell^2u-\ell^2\left(x_1+x_2+x_3-[u+2\ell]\right)+y &\geq& 0,\label{testing2}\\
-\ell u^2-\ell u\left(x_1+x_2+x_3-[2u+\ell]\right)+y &\geq& 0,\label{testing3}\\
-u^3-u^2\left(x_1+x_2+x_3-[3u]\right)+y &\geq& 0. \label{testing4}
\end{eqnarray}
For $\ell>0,$ $m(\x)$ is strictly supermodular and facet \eqref{testing1}, together with the additional five facets obtained by permuting the coefficients $\x[\beta],$ are the first six facets of Table~\ref{tab1}. Facets \eqref{testing2}--\eqref{testing4} are the last three facets of Table~\ref{tab1}. In addition, the six inequalities $\ell \leq x_j \leq u$ for $j\in \{1,2,3\}$ comprise the remaining facets of $\conv{G}.$ For $\ell=0,$ $m(\x)$ is supermodular (but not strictly supermodular because $(m(\x^k)-m(\x^{k-1}))$ takes values $0,$ $0,$ and $u^3$ when $k=1,$ $2,$ and $3,$ respectively), and three of the six facets obtained by permuting $\x[\beta]$ in \eqref{testing1} are repetitive. The three resulting facets are the first three inequalities of Table~\ref{tab2}. Also, and consistent with Remark 1, facets \eqref{testing2} and \eqref{testing3} are the same. Facets \eqref{testing2} and \eqref{testing4} are the last two inequalities of Table~\ref{tab2}. In addition for $\ell=0,$  $x_j \geq 0$ for $j \in \{1,2,3\}$ are not facets.

Suppose that $G$ in the $n=3$ variables $x_1, x_2,x_3,$ and the variable $y,$ is changed to have $y=m(\x)=(2u-\ell)(x_1x_2+x_1x_3+x_2x_3)-x_1x_2x_3,$ with arbitrary $\ell$ and $u.$ Inequalities \eqref{rest0} hold true because the difference between the two sides takes values $2(u-\ell)^3$ and $(u-\ell)^3$ when $k=1$ and $k=2,$ respectively, so that \mythref{result1} is applicable with $m(\x)$ strictly supermodular. Core facet \eqref{res1} is
\begin{equation}
(\ell u)(4\ell -5u)+(4\ell u-3\ell^2)x_1+\left(2u^2-\ell^2\right)x_2+\left(3u^2-2\ell u\right)x_3-y \geq 0, \nonumber
\end{equation}
and core facets \eqref{res2} are
\begin{eqnarray*}
-(4\ell u^2-\ell^3 -\ell^2 u)-(4\ell u - 3\ell^2)\left(x_1+x_2+x_3-[u+2\ell]\right)+y &\geq& 0,\\
-(2\ell u^2-2\ell^2 u+2u^3)-(2u^2-\ell^2)\left(x_1+x_2+x_3-[2u+\ell]\right)+y &\geq& 0,\\
-(5u^3-3\ell u^2)-(3u^2-2\ell u)\left(x_1+x_2+x_3-[3u]\right)+y &\geq& 0.\\
\end{eqnarray*}
There are exactly nine facets \eqref{sum24} with $\beta^{\prime}= \pm 1$ for $\conv{G},$ including the four listed above and the five additional obtained by permuting the coefficients $\x[\beta]$ in the first inequality. The six inequalities $\ell \leq x_j \leq u$ for $j\in \{1,2,3\}$ comprise the remaining facets of $\conv{G},$ as $\ell <u.$ \hfill$\diamond$
\end{example}

\subsection{Convex Hull for Monomial}

\begin{example} \label{ex4}
Consider $G$ in the $n=3$ variables $x_1,x_2,x_3,$ and the variable $y,$ with $m(\x)=5x_1x_2x_3,$ and with $-\ell=u=2$. Since $n \geq 3,$ the convex hull representation is given by the (scaled) six inequalities $-2 \leq x_j \leq 2$ for $j \in \{1,2,3\}$ found in \eqref{con258}, together with the remaining (scaled) two inequalities of \eqref{con258}, $-40 \leq x_4 \leq 40,$ and the eight inequalities of \eqref{con269}, with the last ten such inequalities summarized in Table~\ref{tab3} below. Here, $y$ is used in lieu of $x_4$ for clarity. Each inequality $\beta_0 +\sum_{j=1}^n\beta_jx_j+\beta^{\prime}y \geq 0$ has been scaled to $\beta_0 +\sum_{j=1}^n\left(\frac{\beta_j}{u}\right)x_j+\left(\frac{\beta^{\prime}}{c_nu^n}\right)y \geq 0$ as discussed at the beginning of this section to handle the coefficient $c_n=5$ found in $m(\x)$ and the variable bounds $-\ell=u=2,$ and has then been rescaled to have integer coefficients. The first column of the table gives the facet and the second column, explained later in Example~\ref{ex6}, gives the set of all points $(x_1,x_2,x_3,y) \in G$ where each facet is satisfied exactly. Consistent with Tables 1 and 2, the notation $``x_j"$ indicates that the variable $x_j$ can have $-2 \leq x_j \leq 2,$ and the value $y=x_1x_2x_3$ is suppressed for simplicity.\hfill$\diamond$

\begin{table}
\begin{center}
\caption{Facet and Points in $G$ where Satisfied Exactly}
\label{tab3}
\begin{tabular}{|c | c c c|}
\hline
\vspace{-.1 in}
& & & \\
\vspace{-.1 in}
Facet &  \multicolumn{3}{c} {Points in $G$ where Satisfied Exactly}  \\ 
& & &\\
\hline
$40-y \geq 0$     & \multicolumn{3}{c|} {$(2,2,2)\;\;\;\;(2,-2,-2)\;\;\;\;(-2,2,-2)\;\;\;\;(-2,-2,2)$}     \\
$40+y \geq 0$     & \multicolumn{3}{c|} {$(-2,-2,-2)\;\;\;\;(-2,2,2)\;\;\;\;(2,-2,2)\;\;\;\;(2,2,-2)$}     \\
\hline
$80+20x_1+20x_2+20x_3-y\geq 0$     &$(x_1,-2,-2)$   &$(-2,x_2,-2)$ &$(-2,-2,x_3)$\\
\hline
$80-20x_1-20x_2-20x_3+y\geq 0$     &$(x_1,2,2)$   &$(2,x_2,2)$ &$(2,2,x_3)$\\
\hline
$80-20x_1+20x_2+20x_3+y\geq 0$     &$(x_1,-2,-2)$   &$(2,x_2,-2)$ &$(2,-2,x_3)$\\
$80+20x_1-20x_2+20x_3+y\geq 0$     &$(x_1,2,-2)$   &$(-2,x_2,-2)$ &$(-2,2,x_3)$\\
$80+20x_1+20x_2-20x_3+y\geq 0$     &$(x_1,-2,2)$   &$(-2,x_2,2)$ &$(-2,-2,x_3)$\\
\hline
$80-20x_1-20x_2+20x_3-y\geq 0$     &$(x_1,2,-2)$   &$(2,x_2,-2)$ &$(2,2,x_3)$\\
$80-20x_1+20x_2-20x_3-y\geq 0$     &$(x_1,-2,2)$   &$(2,x_2,2)$ &$(2,-2,x_3)$\\
$80+20x_1-20x_2-20x_3-y\geq 0$     &$(x_1,2,2)$   &$(-2,x_2,2)$ &$(-2,2,x_3)$\\
\hline
\end{tabular}
\end{center}
\end{table}

\end{example}

\subsection{Exactness for Supermodular Functions}

\begin{example} \label{ex5}
Consider $G$ in the $n=3$ variables $x_1,x_2,x_3,$ and the variable $y,$ with $m(\x)=x_1x_2x_3$ as introduced in Example~\ref{ex2} and revisited in Example 3 with $\ell \geq 0.$ As noted in Example~\ref{ex3}, \mythref{result1} is applicable, and \eqref{res1} takes the form \eqref{testing1}, while \eqref{res2} takes the forms \eqref{testing2}, \eqref{testing3}, and \eqref{testing4}. For $\ell >0,$ $m(\x)$ was noted to be strictly supermodular. Then \mythref{exactstrictsupermod} gives us inequality \eqref{testing1} is satisfied exactly at only those points $(\x,y) \in G$ where $\x$ is of the form $(x_1,\ell,\ell),$ $(u,x_2,\ell),$ or $(u,u,x_3),$ matching the first inequality of Table~\ref{tab1}. The next five  inequalities of Table~\ref{tab1}, which follow from permutations of the coefficients of \eqref{testing1}, have the set of points at which each is satisfied exactly obtained via the same permutations. \mythref{exactstrictsupermod} gives us that inequalities \eqref{testing2}, \eqref{testing3}, and \eqref{testing4} are satisfied exactly at only those points $(\x,y) \in G$ where $\x$ has, respectively, at least two entries of value $\ell,$ at least one entry of value $u$ and at least one entry of value $\ell,$ and at least two entries of value $u.$ These results of \mythref{exactstrictsupermod} match the last three rows of Table~\ref{tab1}. Also as noted in Example~\ref{ex3}, the six inequalities $\ell \leq x_j \leq u$ for $j\in \{1,2,3\}$ are all facets of $\conv{G}.$ For each of these last six facets, the set of points $(\x,y) \in G$ which satisfy it exactly is obvious.  For $\ell =0,$ $m(\x)$ was noted to be supermodular, but not strictly supermodular. In this case, \mythref{newsuper100} gives us inequality \eqref{testing1} is satisfied exactly at only those points $(\x,y) \in G$ having either $x_3=0$ or $x_1=x_2=u,$ matching the first inequality of Table~\ref{tab2}. The next two inequalities of Table~\ref{tab2}, which follow from permutations of the coefficients of \eqref{testing1}, have the set of points at which each is satisfied exactly obtained via the same permutations. Also as shown in Example~\ref{ex3}, inequalities \eqref{res2} are the last two inequalities of Table~\ref{tab2}. Then \mythref{newsuper100} gives us that the $y \geq 0$ inequality of \eqref{testing2} and the $-(u^2)x_1-(u^2)x_2-(u^2)x_3+y \geq -2u^3$ inequality of \eqref{testing4} are satisfied exactly at only those points $(\x,y) \in G$ where $\x$ has, respectively, at least one entry of value $0,$ and at least two entries of value $u,$ matching Table~\ref{tab2}.\hfill$\diamond$
\end{example}

\subsection{Exactness for Monomial}

\begin{example} \label{ex6}
Reconsider $G$ in the $n=3$ variables $x_1,x_2,x_3,$ with $m(\x)=5x_1x_2x_3,$ and with $-\ell=u=2$, as found in Example~\ref{ex4} and Table~\ref{tab3}. Recall that each facet within Table~\ref{tab3} has been suitably scaled to handle the coefficient $c_n=5$ found in $m(\x)$ and the variable bounds $-\ell=u=2.$ Therefore, consistent with the discussion at the beginning of this subsection, every point $(\tilde{\x},\tilde{y})$ identified in \mythref{finalet} as satisfying an inequality of the form \eqref{trivial}, \eqref{pair1}, \eqref{pair2}, or \eqref{dominate2} exactly for the case in which $c_n=1$ and $-\ell=u=1$ must be scaled to $(u\tilde{\x},c_nu^n\tilde{y})=(2\tilde{\x},40\tilde{y}).$ The second column of Table~\ref{tab3} gives these scaled points for each such inequality, with the value $y=5x_1x_2x_3$ suppressed for simplicity. Here, the two inequalities in the first block of constraints within Table~\ref{tab3} are scaled \eqref{trivial} for $\beta^{\prime}=-1$ and $\beta^{\prime}=1,$ respectively, the inequality in the second block is scaled \eqref{pair1}, the inequality in the third block is scaled \eqref{pair2}, and the inequalities in the fourth and fifth blocks are the three inequalities resulting from coefficient permutations of the scaled \eqref{dominate2}, with $t=0$ and $t=1,$ respectively. \hfill$\diamond$
\end{example}
\end{appendices}

\end{document}